\newlength{\cellsize}
\newcommand\tableau[1]{
\vcenter{
\let\\=\cr
\baselineskip=-16000pt
\lineskiplimit=16000pt
\lineskip=0pt
\halign{&\tableaucell{##}\cr#1\crcr}}}
\newcommand{\tableaucell}[1]{{%
\def \arg{#1}\def \void{}%
\ifx \void \arg
\vbox to \cellsize{\vfil \hrule width \cellsize height 0pt}%
\else
\unitlength=\cellsize
\begin{picture}(1,1)
\put(0,0){\makebox(1,1){$#1$}}
\put(0,0){\line(1,0){1}}
\put(0,1){\line(1,0){1}}
\put(0,0){\line(0,1){1}}
\put(1,0){\line(0,1){1}}
\end{picture}%
\fi}}
\newtheorem{theorem}{Theorem}[section]
\newtheorem{corollary}[theorem]{Corollary}
\newtheorem{definition}[theorem]{Definition}
\newtheorem{example}[theorem]{Example}
\newtheorem{lemma}[theorem]{Lemma}
\newtheorem{proposition}[theorem]{Proposition}
\newtheorem{remark}[theorem]{Remark}
\newtheorem{remarks}[theorem]{Remarks}
\begin{document}
\title{Atomic decomposition of characters and crystals}

\author[C.~Lecouvey]{C\'edric Lecouvey}
\address[C\'edric Lecouvey]{Institut Denis Poisson, 
Facult\'e des Sciences et Techniques, Universit\'e de Tours, Parc de Grandmont, 37200 Tours, France}
\email{cedric.lecouvey@lmpt.univ-tours.fr}
\urladdr{https://www.idpoisson.fr/lecouvey/}

\author[C.~Lenart]{Cristian Lenart}
\address[Cristian Lenart]{Department of Mathematics and Statistics, State University of New York at Albany, 
1400 Washington Avenue, Albany, NY 12222, U.S.A.}
\email{clenart@albany.edu}
\urladdr{http://www.albany.edu/\~{}lenart/}

\subjclass[2010]{05E10, 17B10}

\begin{abstract}
Lascoux stated that the type $A$ Kostka-Foulkes polynomials $K_{\lambda,\mu}(t)$ expand positively in terms of so-called atomic polynomials. For any semisimple Lie algebra, the former polynomial is a $t$-analogue of the multiplicity of the dominant weight $\mu$ in the irreducible representation of highest weight $\lambda$. We formulate the atomic decomposition in arbitrary type, and view it  as a strengthening of the monotonicity of $K_{\lambda,\mu}(t)$. We also define a combinatorial version of the atomic decomposition, as a decomposition of a modified crystal graph.  We prove that this stronger version holds in type $A$ (which provides a new, conceptual approach to Lascoux's statement), in types $B$, $C$, and $D$ in a stable range for $t=1$, as well as in some other cases, while we conjecture that it holds more generally. Another conjecture stemming from our work leads to an efficient computation of $K_{\lambda,\mu}(t)$. We also give a geometric interpretation.   
\end{abstract}

\maketitle

\section{Introduction}\label{Intro}

The starting point of this paper is a result of Lascoux on the (type $A$)
Kostka-Foulkes polynomials $K_{\lambda,\mu}(t)$, which are well-known
$t$-analogues of the Kostka numbers $K_{\lambda,\mu}$, i.e., the number of
semistandard Young tableaux of shape $\lambda$ and content $\mu$. {Lascoux
\cite{lascpw} stated the decomposition of the Kostka-Foulkes
polynomials into so-called \emph{atomic polynomials}. Some arguments of the
proof in \cite{lascpw} remained elusive}, and it was not until the work of
Shimozono \cite{shimam} that the type $A$ atomic decomposition was
{completely} accepted, this time in larger generality (for the so-called
\emph{generalized Kostka-Foulkes polynomials}). However, the latter proof
involves several intricate combinatorial arguments and related concepts, such
as \emph{plactic monoid}, \emph{cyclage}, and \emph{catabolism}. The
main goal of this paper is to provide a simpler, more conceptual approach, which
has the additional advantage of extending beyond type $A$.

Lusztig defined a remarkable $t$-analogue $K_{\lambda,\mu}(t)$ of the
multiplicity of a weight $\mu$ in the irreducible representation with highest
weight $\lambda$ of a semisimple Lie algebra \cite{Lut}. For dominant weights
$\mu$, these polynomials generalize the type $A$ ones mentioned above, and are
therefore also called \emph{Kostka-Foulkes polynomials}. They have remarkable
properties: 
\begin{itemize}
\item they are essentially \emph{affine Kazhdan-Lusztig polynomials}
\cite{Kat};
\item they record the \emph{Brylinski filtration} of weight spaces \cite{bry};
\item they are the coefficients in the expansion of an irreducible character
in terms of \emph{Hall-Littlewood polynomials} \cite{macsfg};
\item they are related to the \emph{energy function} coming from solvable
lattice models \cite{LecShi,NY}.
\end{itemize}
In classical Lie types, when the rank increases, these polynomials exhibit a
stabilization property, so they have \emph{stable versions} \cite{Lec4}. We
refer to \cite{NR,Ste} for more information on Kostka-Foulkes polynomials. 

There are two combinatorial formulas for type $A$ Kostka-Foulkes polynomials:
one due to Lascoux-Sch\"{u}tzenberger, in terms of the \emph{charge statistic}
on semistandard Young tableaux \cite{LSc1}, and one in terms of the
corresponding Kashiwara \emph{crystal graphs} \cite{HK,kascbm,kash}, due to
Lascoux-Leclerc-Thibon \cite{LLT}. Similar partial combinatorial descriptions
in types $B-D$, in terms of the corresponding \emph{Kashiwara-Nakashima
tableaux} \cite{HK}, and a conjectured charge statistic in type $C$ were constructed by
the first author in \cite{lec,Lec2}. In our previous paper \cite{LL}, we prove
the first general formula beyond type $A$, namely a formula for $K_{\lambda
,0}(t)$ of type $C$. This extends the Lascoux-Leclerc-Thibon formula by using
a simpler approach, and can be expressed in terms of \emph{King tableaux}. 

In Section~\ref{sec:atdec}, following Lascoux \cite{lascpw}, we formulate the
\emph{$t$-atomic decomposition} property in arbitrary Lie type as a
nonnegative expansion for both a Kostka-Foulkes polynomial, and a $t$-analogue
$\chi_{\lambda}^{+}(t)$ of the dominant part of an irreducible character
(defined in terms of Kostka-Foulkes polynomials). Here a character expansion
is in terms of so-called \emph{layer sum polynomials}, which record all
weights of some irreducible representation with multiplicity $1$. The $t$-atomic
decomposition property is a strengthening of the monotonicity of
Kostka-Foulkes polynomials, which holds in the full generality of affine
Kazhdan-Lusztig polynomials \cite[Corollary~3.7]{BrM}. 

For $t=1$, the atomic decomposition was also considered from a purely
algebraic point of view in \cite{khare, ras, Schu}. Some partial results were
given, for instance for the inverse expansion (of layer sum polynomials in
terms of characters). However, the atomic decomposition itself is less
understood, even for $t=1$; for instance, it does not always exist, in the
sense that the mentioned character expansion sometimes fails to be
nonnegative, contrary to what was claimed in Theorem~2.2 of \cite{Schu} (see
Example \ref{counterex}). Nevertheless, these failures seem to be mild. 

As opposed to the algebraic approach mentioned above, in
Section~\ref{sec:tatdec} we define a stronger $t$-atomic decomposition
property, at the combinatorial level of the highest weight crystal
$B(\lambda)$. This property involves a partition of the dominant part
$B(\lambda)^{+}$ of $B(\lambda)$, and a statistic $\mathrm{c}(\cdot)$ on
$B(\lambda)^{+}$. The combinatorial $t$-atomic decomposition leads to combinatorial formulas
for both the Kostka-Foulkes polynomials and the atomic polynomials (into which
the former decompose). 

In Section \ref{sec:inf}, we consider the case when $\lambda$ goes to
infinity, in types $A_{n-1}$, $B_{n}$, $C_{n}$, $D_{n}$, and $G_{2}$. We
introduce the notion of a $t$-atomic decomposition of the crystal $B(\infty)$,
and discuss how it can be realized. 

Next we introduce the two main ingredients for constructing a $t$-atomic
decomposition of a finite highest weight crystal: the
\emph{partial order on dominant weights}, and a \emph{modified crystal graph}
structure. The natural poset structure on dominant weights is discussed in
Section~\ref{sec:pod}, by recalling some important properties which hold in
arbitrary Lie type \cite{stepod}. We need extra information about this poset,
namely the structure of its small intervals, which was only known in type $A$
\cite{brylip}. The second ingredient, namely a {modified crystal graph}
structure on $B(\lambda)^{+}$, is discussed in Section~\ref{sec:modcr}; this
structure can also be viewed as a poset, with covers corresponding to the
modified crystal edges. The associated modified crystal operators are obtained
mainly by conjugating the ordinary Kashiwara operator $\tilde{f}_{1}$ under the
crystal action of the Weyl group. There are two main differences between the
original and the modified crystal operators: (1)~the latter are indexed by
arbitrary positive roots; (2)~the original $B(\lambda)$ is a connected
graph/poset, whereas the modified crystal graph on $B(\lambda)^{+}$ is disconnected in general.
An interesting question is whether the modified crystal operators for $\lambda$ become those for $B(\infty)$ in Section~\ref{sec:inf} when $\lambda$ goes to infinity.

In Section~\ref{sec:proofatdec}, we start by showing that type $A$ crystals
admit a $t$-atomic decomposition, thus realizing combinatorially the classical
result, while also providing a relatively simple and conceptual proof of it.
Here the desired partition of the modified crystal poset $B(\lambda)^{+}$ is given by its components, which are shown to admit unique minimal and maximal vertices. 
The relevant statistic $\mathrm{c}(\cdot)$ is the
Lascoux-Sch\"{u}tzenberger charge \cite{LSc1}. Similarly, given a fixed
dominant weight $\lambda$ of type $B$, $C$, or $D$, and assuming a large enough
rank (depending on $\lambda$), we show that the corresponding modified crystal graph 
$B(\lambda)^{+}$ gives an atomic decomposition (i.e., in the case $t=1$). Note that, in type $B$, we need to add to the modified crystal graph the operators obtained by conjugating the Kashiwara operator~$\tilde{f}_n$. 

In Section \ref{sec:conj} we conjecture that the mentioned decompositions in types $B$, $C$, and $D$ are, in
fact, $t$-atomic decompositions, for appropriate choices of the statistic
$\mathrm{c}(\cdot)$, which are related to the results in \cite{lec,Lec2} and
\cite{LL}. We also discuss the atomic decomposition of the stable one-dimensional sums in classical types, and explain how to obtain a $t$-atomic decomposition of the
crystal $B(\widetilde{\alpha})^{+}$ of the adjoint representation of
$\mathfrak{g}$ in any type. The latter fact is useful because it clarifies the type and rank
restrictions we considered in Section~\ref{sec:proofatdec}: for instance, in types $C$ and $D$, they are needed to ensure
that the covering relations in the poset $B(\lambda)^{+}$ make only
appear roots in the orbit of the simple root $\alpha_{1}$. Under such 
assumptions, we are indeed able to establish fine commutation relations
satisfied by the modified crystal operators, which are required to derive the
atomic decomposition of crystals (notably, the existence of unique maximal and minimal vertices). For the simply laced types $D$ (with no
large rank assumption) and $E$, the atomic decomposition of the characters can
only hold with some restrictions on the dominant weights considered, but we
expect that it can be derived similarly from the same modified crystal operators
and a detailed analysis of the corresponding dominant weight poset. In the
non-simply laced case of general rank, the situation becomes more complicated,
but it is certainly possible to again derive relevant atomic decompositions of
crystals by using modified crystal operators; this time, they would be defined using the Weyl group
conjugation of two ordinary crystal operators, associated to one simple root of
each length. 

A geometric interpretation of Lascoux's atomic decomposition of the type $A$
Kostka-Foulkes polynomial was given in \cite{bahnov} in terms of
\emph{nilpotent orbit varieties}. In Section~\ref{sec:geom}, we provide a
different, type-independent interpretation of an atomic decomposition of
$\chi_{\lambda}^{+}$, in terms of the \emph{geometric Satake correspondence}.

\noindent\textbf{Acknowledgments:} The second author gratefully acknowledges the partial support from the NSF grant DMS--1362627 and the Simons grant \#584738. He thanks Institut des Hautes \'Etudes Scientifiques (IH\'ES) for its hospitality during July-August 2018, when this work was completed. Both authors are grateful to Arthur Lubovsky and Adam Schultze for the computer tests (based on the {\tt Sage} \cite{combinat} system) related to this work; they also received support from the NSF grant mentioned above. 

\section{The atomic decomposition: background, definitions, and basic facts}

\subsection{Characters and $t$-deformations}\label{sec:backgr}

Let $\mathfrak{g}$ be a simple Lie algebra over $\mathbb{C}$ of rank $r$ with
triangular decomposition
\[
\mathfrak{g=}\bigoplus\limits_{\alpha\in R^{+}}\mathfrak{g}_{\alpha}%
\oplus\mathfrak{h}\oplus\bigoplus\limits_{\alpha\in R^{+}}\mathfrak{g}%
_{-\alpha}\,,
\]
so that $\mathfrak{h}$ is the Cartan subalgebra of $\mathfrak{g}$ and $R^{+}$
its set of positive roots. The root system $R=R^{+}\sqcup(-R^{+})$ of
$\mathfrak{g}$ is realized in a real Euclidean space $E$ of dimension $r$ with
inner product $\langle\,\cdot\,,\,\cdot\,\rangle$. For any $\alpha\in R,$ we write $\alpha^{\vee
}=\frac{2\alpha}{\langle\alpha,\alpha\rangle}$ for its coroot. Let $S\subset R^{+}$ be the
subset of simple roots and $Q^{+}$ the {$\mathbb{Z}_+$}-cone generated by $S$. The set $P$ of
integral weights for $\mathfrak{g}$ consists of elements $ \lambda$ satisfying $\langle \lambda,\alpha^{\vee}%
\rangle\in\mathbb{Z}$ for any $\alpha\in R$. We write
$P^{+}=\{\lambda\in P\mid\langle\lambda,\alpha^{\vee}\rangle\geq0$ for any $\alpha\in S\}$ for
the cone of dominant weights of $\mathfrak{g}$, and denote by $\omega
_{1},\ldots,\omega_{r}$ its fundamental weights. We consider the group algebra of the weight lattice, denoted ${\mathbb Z}[P]$, which has a ${\mathbb Z}$-basis of formal exponentials $e^\lambda$, for $\lambda\in P$, with usual multiplication $e^\lambda e^\mu=e^{\lambda+\mu}$. Let $W$ be the Weyl group of
$\mathfrak{g}$ generated by the reflections $s_{\alpha}$ with $\alpha\in S$,
and write $\ell(\cdot)$ for the corresponding length function. The {\em dominance order}
$\leq$ on $P^+$ is defined by $\lambda<\mu$ if and only if $\mu-\lambda$ decomposes as a sum of positive roots (or equivalently, simple roots) with nonnegative integer coefficients. Many interesting properties of this order were studied in \cite{stepod}, and some of them will be used in this paper; for instance, each component of this poset is a lattice.

Let $\chi_{\lambda}^{\mathfrak{g}}$ be the {\em Weyl character} associated to the
finite-dimensional irreducible representation $V(\lambda)$ of $\mathfrak{g}$
with highest weight $\lambda\in P^+$, namely%
\[
\chi_{\lambda}^{\mathfrak{g}}=\sum_{\gamma\in P}K_{\lambda,\gamma}^{\mathfrak g}\,e^{\gamma}\,,%
\]
where $K_{\lambda,\gamma}^{\mathfrak{g}}$ is the dimension of the weight space $\gamma$ in
$V(\lambda)$. For simplicity, we remove the superscript $\mathfrak{g}$ when the context makes it clear. By the {\em Weyl character formula} we have
\begin{equation}
\chi_{\lambda}=\frac{\sum_{w\in W}(-1)^{\ell(w)}e^{w(\lambda
+\rho)-\rho}}{\prod_{\alpha\in R^{+}}(1-e^{-\alpha})}\,.
\end{equation}
This formula expresses the weight multiplicities $K_{\lambda,\gamma}$ as follows:%
\[
K_{\lambda,\gamma}=\sum_{w\in W}(-1)^{\ell(w)}\emph{P}%
(w(\lambda+\rho)-(\gamma+\rho))\,,
\]
where $\rho$ is the half sum of positive roots, and $P(\cdot)$ is the {\em Kostant
partition function}, defined by
\[
\prod_{\alpha\in R^{+}}\frac{1}{1-e^{\alpha}}=\sum_{\beta\in Q^{+}}%
\emph{P}(\beta)\,e^{\beta}\,.
\]
When $\gamma=\mu$ is dominant, the multiplicity $K_{\lambda,\mu}%
$ has an interesting {\em $t$-analogue} $K_{\lambda,\mu
}(t)$, also known as a {\em Kostka-Foulkes polynomial}. This was introduced by Lusztig \cite{Lut}, who defined%
\begin{equation}
K_{\lambda,\mu}(t):=\sum_{w\in W}(-1)^{\ell(w)}\emph{P}%
_{t}(w(\lambda+\rho)-(\mu+\rho))\,;
\end{equation}
here the {\em $t$-analogue of the Kostant partition function} $P_t(\cdot)$ is given by
\[
\prod_{\alpha\in R^{+}}\frac{1}{1-te^{\alpha}}=\sum_{\beta\in Q^{+}}%
\emph{P}_{t}(\beta)\,e^{\beta}\,.
\]
We have $K_{\lambda,\mu}(1)=K_{\lambda,\mu}%
$. Moreover, $K_{\lambda,\mu}(t)$ is essentially an {\em affine
Kazhdan-Lusztig polynomial}, which implies that it has nonnegative integer coefficients. More precisely, we have
\begin{equation}\label{affinekl}
K_{\lambda,\mu}(t)=t^{\langle\lambda-\mu,\rho^\vee\rangle}P_{w_\mu,w_\lambda}(t^{-1})\,,
\end{equation}
where $w_\lambda$ denotes the longest element of $Wt_\lambda W$, and $t_\lambda$ is the translation by $\lambda$ in the extended affine Weyl group \cite{Kat} (see also \cite[Section~4]{Ste}); note that $\langle\lambda-\mu,\rho^\vee\rangle$ is the number of simple roots in the decomposition of $\lambda-\mu$, counted with multiplicity. Based on \eqref{affinekl}, we let 
\begin{equation}\label{ktilde}
\widetilde{K}_{\lambda,\mu}(t):=t^{\langle\lambda-\mu,\rho^\vee\rangle}K_{\lambda,\mu}(t^{-1})\,,\;\;\;\;\;\mbox{so $\widetilde{K}_{\lambda,\mu}(t)=P_{w_\mu,w_\lambda}(t)$}\,.
\end{equation}

\bigskip

To each irreducible representation $V(\lambda)$ is associated an abstract
{\em Kashiwara crystal} $B(\lambda)$ (see \cite{HK,kascbm,kash} for background on crystals), and
we have
\begin{equation}
\chi_{\lambda}=\sum_{b\in B(\lambda)}e^{\mathrm{wt}(b)}\,,\label{LCF}%
\end{equation}
where $\mathrm{wt}(b)$ is the weight of the vertex $b\in B(\lambda)$. The
crystal $B(\infty)$ is defined as the direct limit of the crystal $B(\lambda)$
when $\lambda$ goes to infinity in the interior of its Weyl chamber. It
corresponds to the crystal of the positive part of the quantum group
$U_{q}(\mathfrak{g})$ associated to $\mathfrak{g}$.\ One can then prove that
\begin{equation}
\mathrm{char}\,B(\infty)=\sum_{b\in B(\infty)}e^{\mathrm{wt}(b)}=\prod
_{\alpha\in R^{+}}\frac{1}{1-e^{-\alpha}}\,.
\end{equation}

\subsection{The definition of the atomic decomposition}\label{sec:atdec}

Let us denote by $P(\lambda)$ the set of weights of $V(\lambda)$, i.e., the set of $\gamma$ such that
$K_{\lambda,\gamma}>0$. Also set $P^{+}(\lambda
)=P(\lambda)\cap P^{+}$. Recall that we have $P^{+}%
(\lambda)=\{\mu\in P^{+}\mid\mu\leq\lambda\}$. Since we have $K_{\lambda
,\gamma}=K_{\lambda,w(\gamma)}$ for any $w\in
W$, the character $\chi_{\lambda}$ is completely determined by its
dominant part%
\[
\chi_{\lambda}^+:=\sum_{\mu\in P^{+}(\lambda)}\,K_{\lambda,\mu
}\,e^{\mu}\,.
\]
We also define the $t$-analogue of $\chi_{\lambda}^+$ by
\begin{equation}\label{domchar}
\chi_{\lambda}^+(t):=\sum_{\mu\in P^{+}(\lambda)}%
K_{\lambda,\mu}(t)\,e^{\mu}\,.%
\end{equation}

For any dominant weight $\mu$, define the {\em layer sum polynomials} by
\begin{equation}
w_{\mu}:=\sum_{\gamma\in P(\mu)}e^{\gamma}\;\;\;\;\text{ and }\;\;\;\;w_{\mu}^{+}%
:=\sum_{\nu\in P^{+}(\mu)}e^{\nu}=\sum_{\nu\leq\mu}e^{\nu}\,.
\end{equation}
Observe that we have
\[
w_{\mu}=\sum_{\nu\in P^{+}(\mu)}m_{\nu}\,,\;\;\;\;\text{ where }m_{\nu}=\sum
_{\gamma\in W\nu}e^{\gamma}\,.
\]
Similarly, define
\begin{equation}
w_{\mu}^{+}(t):=\sum_{\nu\in P^{+}(\mu)}t^{\langle\mu-\nu,\rho^\vee\rangle}e^{\nu}=\sum_{\nu\leq\mu}t^{\langle\mu-\nu,\rho^\vee\rangle}e^{\nu}\,.%
\end{equation}

\begin{remark}\label{bases}{\rm 
By the triangularity of the coresponding transition matrices, we have:
\begin{itemize}
\item $w_\mu$ is a ${\mathbb Z}$-basis of the $W$-invariants ${\mathbb Z}[P]^W$;
\item $w_\mu^+$ is a ${\mathbb Z}$-basis of ${\mathbb Z}[P^+]$;
\item $w_\mu^+(t)$ is a ${\mathbb Z}[t]$-basis of ${\mathbb Z}[t][P^+]$.
\end{itemize}}
\end{remark}

There exists a Weyl-type formula for the polynomials $w_{\mu}$, which follows
from Brion's formula \cite{Bri} counting the cardinality of the intersection
between a convex polygon and a lattice, see \cite[Theorem~4.3]{Pos}; this formula was rederived in
\cite{Schu} from the axioms of root systems. It is stated as follows:%
\begin{equation}
w_{\mu}=\sum_{w\in W}\frac{e^{w(\mu)}}{\prod_{\alpha\in S}(1-e^{-w(\alpha)}%
)}\,;\label{BCF}%
\end{equation}
here, for any simple root $\alpha$ and any Weyl group element $w$, we set%
\begin{align*}
\frac{1}{1-e^{-w(\alpha)}} &  =\sum_{k=0}^{+\infty}e^{kw(\alpha)}\;\;\;\;\text{ if
}w(\alpha)\in R^{+}\,,\text{ and}\\
\frac{1}{1-e^{-w(\alpha)}} &  =-\frac{e^{w(\alpha)}}{1-e^{w(\alpha)}}%
=-\sum_{k=0}^{+\infty}e^{(k+1)w(\alpha)}\;\;\;\;\text{ if }w(\alpha)\in-R^{+}\,.
\end{align*}
Observe that $w_{\mu}^{+}(t)$ coincides with the dominant part in the expansion%
\[
\frac{e^{\mu}}{\prod_{\alpha\in S}(1-te^{-\alpha})}\,.%
\]

Based on Remark~\ref{bases}, consider the expansion
\begin{equation}
\chi_{\lambda}=\sum_{\mu\in P^{+}(\lambda)}A_{\lambda,\mu
}\,w_{\mu}\,,\;\;\;\;\;\mbox{or equivalently }\,\chi_{\lambda}^+=\sum_{\mu\in P^{+}(\lambda)}A_{\lambda,\mu
}\,w_{\mu}^{+}\,. \label{AtomDec}%
\end{equation}
Similarly, consider the polynomials $A_{\lambda,\mu}(t)$, called {\em atomic polynomials}, which are defined by
\begin{equation}\label{atdec}
\chi_{\lambda}^+(t)=\sum_{\mu\in P^{+}(\lambda)}%
A_{\lambda,\mu}(t)\,w_{\mu}^{+}(t)\,.
\end{equation}
We have $A_{\lambda,\lambda}(t)=1$. 
The following property of the atomic polynomials gives an alternative definition of them, via M\"obius inversion on the dominance order.

\begin{proposition}\label{defateq} The atomic polynomials satisfy the following relation:
\begin{equation}\label{lasc0}K_{\lambda,\nu}(t)=\sum_{\nu\le\mu\le\lambda}t^{\langle\mu-\nu,\rho^\vee\rangle}A_{\lambda,\mu}(t)\,,\;\;\;\;\;\mbox{for all $\nu\in P^{+}(\lambda)$}\,.\end{equation}
\end{proposition}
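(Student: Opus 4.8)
The plan is to derive \eqref{lasc0} directly from the defining expansion \eqref{atdec} of the atomic polynomials, by substituting the explicit formula for the layer sum polynomials and then extracting the coefficient of each dominant exponential. Recall that $w_\mu^{+}(t)=\sum_{\nu\leq\mu}t^{\langle\mu-\nu,\rho^\vee\rangle}e^{\nu}$, where the sum runs over $\nu\in P^{+}(\mu)$. First I note that, since $\mu\in P^{+}(\lambda)$ forces $\mu\leq\lambda$, every such $\nu$ satisfies $\nu\leq\mu\leq\lambda$ and hence lies in $P^{+}(\lambda)$, using $P^{+}(\lambda)=\{\mu\in P^{+}\mid\mu\leq\lambda\}$; in particular the entire computation takes place in the free $\mathbb{Z}[t]$-module spanned by $\{e^{\nu}\mid\nu\in P^{+}(\lambda)\}$.

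Substituting this expression into \eqref{atdec} gives
\[
\chi_{\lambda}^{+}(t)=\sum_{\mu\in P^{+}(\lambda)}A_{\lambda,\mu}(t)\sum_{\nu\leq\mu}t^{\langle\mu-\nu,\rho^\vee\rangle}e^{\nu}\,.
\]
Interchanging the two summations and collecting, for each fixed $\nu\in P^{+}(\lambda)$, the total coefficient of $e^{\nu}$, I obtain
\[
\chi_{\lambda}^{+}(t)=\sum_{\nu\in P^{+}(\lambda)}\Bigl(\sum_{\nu\leq\mu\leq\lambda}t^{\langle\mu-\nu,\rho^\vee\rangle}A_{\lambda,\mu}(t)\Bigr)e^{\nu}\,.
\]
On the other hand, by the definition \eqref{domchar} we have $\chi_{\lambda}^{+}(t)=\sum_{\nu\in P^{+}(\lambda)}K_{\lambda,\nu}(t)\,e^{\nu}$. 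Since the exponentials $e^{\nu}$ are linearly independent over $\mathbb{Z}[t]$, comparing the coefficients of $e^{\nu}$ in the two expressions yields exactly \eqref{lasc0}.

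This argument is essentially a change in the order of summation, so I do not expect any real obstacle; the only point requiring care is the matching of index sets, namely that $P^{+}(\mu)\subseteq P^{+}(\lambda)$ whenever $\mu\leq\lambda$, which is immediate. Finally, to justify the phrase in the statement, I observe that \eqref{lasc0} expresses the column vector $(K_{\lambda,\nu}(t))_{\nu}$ as the image of $(A_{\lambda,\mu}(t))_{\mu}$ under the matrix $(t^{\langle\mu-\nu,\rho^\vee\rangle})_{\nu\leq\mu}$, which is upper-unitriangular with respect to the dominance order (the diagonal entries are $t^{0}=1$). As such a matrix is invertible over $\mathbb{Z}[t]$, one can solve for the $A_{\lambda,\mu}(t)$ in terms of the $K_{\lambda,\nu}(t)$ by a Möbius-type inversion on the interval $[\nu,\lambda]$ of the dominance order; this provides the alternative definition of the atomic polynomials and simultaneously reproves that they are well defined, consistently with Remark~\ref{bases}.
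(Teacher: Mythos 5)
Your proof is correct and follows essentially the same route as the paper's: substitute the definition of $w_\mu^{+}(t)$ into \eqref{atdec}, interchange the order of summation, and compare coefficients of $e^{\nu}$ with \eqref{domchar}. The additional remarks on index sets and on the unitriangularity of the transition matrix are fine but not needed beyond what the paper records in Remark~\ref{bases}.
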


\begin{proof} The expansion \eqref{atdec} can be written
\begin{align*}\chi_{\lambda}^+(t)&=\sum_{\mu\in P^{+}(\lambda)}A_{\lambda,\mu}(t)\left(\sum_{\nu\le\mu} t^{\langle\mu-\nu,\rho^\vee\rangle}e^{\nu}\right)\\
&=\sum_{\nu\le\lambda}\left(\sum_{\nu\le\mu\le\lambda}t^{\langle\mu-\nu,\rho^\vee\rangle}A_{\lambda,\mu}(t)\right)e^\nu\,.
\end{align*}
By comparing with \eqref{domchar}, the desired relation follows.
\end{proof}

\begin{definition}\label{defat}
The character $\chi_{\lambda}$ admits an {\em atomic decomposition} if
 $A_{\lambda,\mu}\in{\mathbb Z}_{\ge 0}$ for any $\mu\in P^+(\lambda)$.
Similarly, we say that $\chi_{\lambda}^+(t)$ admits a
{\em $t$-atomic decomposition} if $A_{\lambda,\mu}%
(t)\in\mathbb{Z}_{\geq0}[t]$ for any $\mu\in P^+(\lambda)$. 
\end{definition}

By analogy with \eqref{domchar} and using \eqref{ktilde}, we define
\[\widetilde{\chi}_{\lambda}^+(t):=\sum_{\mu\in P^{+}(\lambda)}%
\widetilde{K}_{\lambda,\mu}(t)\,e^{\mu}\,.\]
Like in \eqref{atdec}, consider the polynomials $\widetilde{A}_{\lambda,\mu}(t)$ defined by
\begin{equation}\label{modatdec}\widetilde{\chi}_{\lambda}^+(t)=\sum_{\mu\in P^{+}(\lambda)}\widetilde{A}_{\lambda,\mu}(t)\,w_{\mu}^{+}\,,\end{equation}
where we recall that $w_\mu^+:=w_\mu^+(1)$.
When the context is clear, we will also refer to $\widetilde{A}_{\lambda,\mu}(t)$ as atomic polynomials. As shown below, the $\widetilde{A}_{\lambda,\mu}(t)$ are closely related to ${A}_{\lambda,\mu}(t)$, and are subject to an analogue of~\eqref{lasc0}. 

\begin{proposition}\label{defateq1} The polynomials $\widetilde{A}_{\lambda,\mu}(t)$ satisfy the following relation:
\begin{equation}\label{lasc}\widetilde{K}_{\lambda,\nu}(t)=\sum_{\nu\le\mu\le\lambda}\widetilde{A}_{\lambda,\mu}(t)\,,\;\;\;\;\;\mbox{for all $\nu\in P^{+}(\lambda)$}\,.\end{equation}
Moreover, we have
\begin{equation}\label{relat}\widetilde{A}_{\lambda,\mu}(t)=t^{\langle\lambda-\mu,\rho^\vee\rangle}A_{\lambda,\mu}(t^{-1})\,.\end{equation}
Thus, the $t$-atomic decomposition is equivalent to the fact that $\widetilde{A}_{\lambda,\mu}(t)\in\mathbb{Z}_{\geq0}[t]$. 
\end{proposition}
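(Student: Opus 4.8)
The plan is to prove the two identities \eqref{lasc} and \eqref{relat} in turn, and then deduce the equivalence. For \eqref{lasc} I would simply repeat the computation in the proof of Proposition~\ref{defateq}, the only difference being that the layer sums $w_\mu^+=w_\mu^+(1)$ now carry no powers of $t$. Substituting $w_\mu^+=\sum_{\nu\le\mu}e^\nu$ into the defining expansion \eqref{modatdec} and collecting the coefficient of each $e^\nu$ gives $\widetilde{\chi}_{\lambda}^+(t)=\sum_{\nu\le\lambda}\bigl(\sum_{\nu\le\mu\le\lambda}\widetilde{A}_{\lambda,\mu}(t)\bigr)e^\nu$, and comparing with $\widetilde{\chi}_{\lambda}^+(t)=\sum_{\mu\in P^+(\lambda)}\widetilde{K}_{\lambda,\mu}(t)\,e^\mu$ yields \eqref{lasc}. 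I would also note that \eqref{lasc} is a unitriangular system with respect to the dominance order (its diagonal term, at $\mu=\nu$, equals $1$), hence can be inverted by M\"obius inversion to recover the family $\{\widetilde{A}_{\lambda,\mu}(t)\}_{\mu}$ uniquely; the underlying basis property is Remark~\ref{bases}.

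For \eqref{relat} I would exploit exactly this uniqueness: it is enough to check that the candidate polynomials $t^{\langle\lambda-\mu,\rho^\vee\rangle}A_{\lambda,\mu}(t^{-1})$ satisfy the same system \eqref{lasc}. Starting from \eqref{lasc0} with $t$ replaced by $t^{-1}$, multiplying through by $t^{\langle\lambda-\nu,\rho^\vee\rangle}$, and using the linearity identity $\langle\lambda-\nu,\rho^\vee\rangle-\langle\mu-\nu,\rho^\vee\rangle=\langle\lambda-\mu,\rho^\vee\rangle$, I obtain $t^{\langle\lambda-\nu,\rho^\vee\rangle}K_{\lambda,\nu}(t^{-1})=\sum_{\nu\le\mu\le\lambda}t^{\langle\lambda-\mu,\rho^\vee\rangle}A_{\lambda,\mu}(t^{-1})$. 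The left-hand side is $\widetilde{K}_{\lambda,\nu}(t)$ by the definition \eqref{ktilde}, so the candidate family satisfies \eqref{lasc}; uniqueness then forces \eqref{relat}.

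Finally, for the equivalence of the two nonnegativity conditions, \eqref{relat} shows that $\widetilde{A}_{\lambda,\mu}(t)$ is obtained from $A_{\lambda,\mu}(t)$ by the substitution $t\mapsto t^{-1}$ followed by multiplication by $t^{N}$, where $N:=\langle\lambda-\mu,\rho^\vee\rangle\ge 0$; thus the two share the same multiset of coefficients, only reindexed. To turn this into a genuine statement about membership in $\mathbb{Z}_{\ge0}[t]$, I first need both $A_{\lambda,\mu}(t)$ and $\widetilde{A}_{\lambda,\mu}(t)$ to be honest polynomials, which reduces to the degree bound $\deg A_{\lambda,\mu}(t)\le N$. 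This I would prove by downward induction on $\mu$ in the dominance order using \eqref{lasc0}, the base case being $A_{\lambda,\lambda}(t)=1$ and the input being $\deg K_{\lambda,\nu}(t)\le\langle\lambda-\nu,\rho^\vee\rangle$, which follows from \eqref{affinekl} since $\widetilde{K}_{\lambda,\mu}(t)=P_{w_\mu,w_\lambda}(t)$ is an honest polynomial. Granting the bound, $\widetilde{A}_{\lambda,\mu}(t)=t^{N}A_{\lambda,\mu}(t^{-1})$ has lowest degree $N-\deg A_{\lambda,\mu}(t)\ge 0$, so it lies in $\mathbb{Z}[t]$ and has nonnegative coefficients precisely when $A_{\lambda,\mu}(t)$ does. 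I expect the only genuinely delicate point to be this last polynomiality and degree check; the remainder is a direct transcription of the M\"obius-inversion argument already used for Proposition~\ref{defateq}, combined with the uniqueness afforded by the unitriangular structure of \eqref{lasc}.
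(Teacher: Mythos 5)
Your proof is correct and follows essentially the same route as the paper: the same coefficient comparison for \eqref{lasc}, and for \eqref{relat} the same substitution of \eqref{lasc0} at $t^{-1}$ into \eqref{ktilde} followed by identification of the resulting family via the unitriangularity of the system. The only difference is that you explicitly verify the degree bound $\deg A_{\lambda,\mu}(t)\le\langle\lambda-\mu,\rho^\vee\rangle$ needed to make the final equivalence of nonnegativity conditions airtight, a point the paper leaves implicit; your induction for it is valid.
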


\begin{proof} The proof of \eqref{lasc} is completely similar to that of~\eqref{lasc0}. As for~\eqref{relat}, it can be seen in the following way:
\begin{align*}
\widetilde{K}_{\lambda,\nu}(t)&=t^{\langle\lambda-\nu,\rho^\vee\rangle}K_{\lambda,\nu}(t^{-1})=t^{\langle\lambda-\nu,\rho^\vee\rangle}\sum_{\nu\le\mu\le\lambda}t^{\langle\nu-\mu,\rho^\vee\rangle}A_{\lambda,\mu}(t^{-1})\\
&=\sum_{\nu\le\mu\le\lambda}t^{\langle\lambda-\mu,\rho^\vee\rangle}A_{\lambda,\mu}(t^{-1})=\sum_{\nu\le\mu\le\lambda}\widetilde{A}_{\lambda,\mu}(t)\,.
\end{align*}
\end{proof}

\begin{remarks}\label{lasdec} {\rm  (1) Lascoux \cite{lascpw} made a statement very closely related to~\eqref{lasc}, for type $A$. The slight difference consists in the definition of $\widetilde{K}_{\lambda,\mu}(t)$, for given partitions $\lambda,\mu$, namely $\widetilde{K}_{\lambda,\mu}(t):=t^{n(\mu)}\,K_{\lambda,\mu}(t^{-1})$, where $n(\mu):=\sum_i(i-1)\mu_i$. 
}

{\rm (2) The $t$-atomic decomposition, as stated in Proposition~\ref{defateq1}, implies the monotonicity of the Kostka-Foulkes polynomials, which holds in the full generality of Kazhdan-Lusztig polynomials for finite and affine Weyl groups \cite[Corollary~3.7]{BrM}. Indeed, the latter says that, for  $x\le y\le z$ in such a Weyl group, the difference of Kazhdan-Lusztig polynomials $P_{x,z}(t)-P_{y,z}(t)$ is in ${\mathbb Z}_{\ge 0}[t]$. For $z=w_\lambda$, $y=w_\mu$, and $x=w_\nu$, with $\nu\le\mu\le\lambda$, this follows based on \eqref{ktilde} and the fact that the atomic polynomials in the decomposition of $\widetilde{K}_{\lambda,\mu}(t)$ are among those in the decomposition of $\widetilde{K}_{\lambda,\nu}(t)$.}

{\rm (3) {We shall see} that the $t$-atomic decomposition is always true in type $A$, as mentioned in Section~\ref{Intro}. However, even the atomic decomposition (i.e., the positivity in \eqref{AtomDec}) might fail beyond type $A$, unlike it was claimed in \cite[Theorem~2.2]{Schu}. The shortest counterexample we found is in type $D_4$, and is given in Example~\ref{counterex} below. However, a slight increase in rank corrects this problem, and in fact we will see that this is a general phenomenon. 
}
\end{remarks}

\begin{example}\label{counterex}{\rm Consider $\lambda:=2\omega_1+2\omega_2$ in type $D_4$. For simplicity, we let $w_{abcd}:=w_{\mu}$ for $\mu:=a\omega_1+b\omega_2+c\omega_3+d\omega_4$. With this notation, we have:
\begin{align*}\chi_\lambda&=w_\lambda+w_{4000}+w_{1111}+w_{2002}+w_{0022}+w_{2020}+2w_{2100}+w_{0200}+\\
&+4w_{1011}+5w_{0002}+5w_{0020}+11w_{2000}-3w_{0100}+17w_{0000}\,.
\end{align*}
However, for the same $\lambda$ we obtain a positive expansion in type $D_5$. }
\end{example}

Set $t_{i}:=e^{-\alpha_{i}}\in]0,1[$, for $i=1,\ldots,r$, and
consider a sequence $(\lambda^{(k)})_{k\geq0}$ of dominant weights such that
$\underset{k\rightarrow+\infty}{\mathrm{lim}}\langle\lambda^{(k)},\alpha
_{i}\rangle=+\infty$, for any $i=1,\ldots,r$. We shall then write $\lambda
\rightarrow+\infty$ for short.

\begin{proposition}
\label{prop_limit}Under the previous assumption we have%
\[
\underset{\lambda\rightarrow+\infty}{\mathrm{lim}}e^{-\lambda}\,\chi_{\lambda
}=\prod_{\alpha\in R^{+}}\frac{1}{1-e^{-\alpha}}\;\;\;\;\text{ and }\;\;\;\;\underset
{\lambda\rightarrow+\infty}{\mathrm{lim}}e^{-\lambda}\,w_{\lambda}=\prod
_{\alpha\in S}\frac{1}{1-e^{-\alpha}}\,.
\]
\end{proposition}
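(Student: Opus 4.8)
The plan is to derive both limits from the respective Weyl-type formulas, isolating the contribution of the identity element of $W$ and showing that every other term decays to $0$ after we multiply by $e^{-\lambda}$ and let $\lambda\to+\infty$. Throughout I regard each side as a genuine real number under the substitution $e^{-\alpha_i}=t_i\in\,]0,1[$; note that both right-hand sides are then finite nonzero reals, since $R^+$ and $S$ are finite and $1-e^{-\alpha}\in\,]0,1[$ for every $\alpha\in R^+$. The common decay mechanism is this: if $\lambda$ is dominant then $\lambda-w(\lambda)\in Q^+$, say $\lambda-w(\lambda)=\sum_i c_i(\lambda)\,\alpha_i$ with $c_i(\lambda)\ge 0$, and its height satisfies $\sum_i c_i(\lambda)=\langle\lambda-w(\lambda),\rho^\vee\rangle=\langle\lambda,\rho^\vee-w^{-1}\rho^\vee\rangle$. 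For $w\ne e$ the coweight $\rho^\vee-w^{-1}\rho^\vee$ is a nonzero nonnegative combination of simple coroots, so, since the hypothesis $\langle\lambda,\alpha_i\rangle\to+\infty$ is equivalent to $\langle\lambda,\alpha_i^\vee\rangle\to+\infty$, this height tends to $+\infty$. Hence $e^{w(\lambda)-\lambda}=\prod_i t_i^{\,c_i(\lambda)}\le(\max_j t_j)^{\sum_i c_i(\lambda)}\to 0$, because $\max_j t_j<1$.

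For the first limit I would start from the Weyl character formula, which gives
\[ e^{-\lambda}\chi_\lambda=\frac{\sum_{w\in W}(-1)^{\ell(w)}\,e^{w(\lambda+\rho)-(\lambda+\rho)}}{\prod_{\alpha\in R^+}(1-e^{-\alpha})}\,. \]
The denominator is independent of $\lambda$ and nonzero, so it suffices to analyze the numerator, a finite sum over $W$. The term $w=e$ contributes $e^0=1$, while for $w\ne e$ the decay estimate above (applied to the regular dominant weight $\lambda+\rho$ in place of $\lambda$) shows $e^{w(\lambda+\rho)-(\lambda+\rho)}\to 0$. Thus the numerator tends to $1$, and the first limit follows.

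For the second limit I would run the identical argument on Brion's formula \eqref{BCF}, which yields
\[ e^{-\lambda}w_\lambda=\sum_{w\in W}\frac{e^{w(\lambda)-\lambda}}{\prod_{\alpha\in S}(1-e^{-w(\alpha)})}\,. \]
Here the $w=e$ summand is exactly the desired right-hand side $\prod_{\alpha\in S}(1-e^{-\alpha})^{-1}$, and for $w\ne e$ the decay estimate gives $e^{w(\lambda)-\lambda}\to 0$ while each denominator $\prod_{\alpha\in S}(1-e^{-w(\alpha)})$ is a fixed nonzero real. So all terms with $w\ne e$ vanish in the limit.

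The one point requiring care---and the main obstacle---is the legitimacy of evaluating \eqref{BCF} at $t_i\in\,]0,1[$: the formal geometric-series expansions recorded after \eqref{BCF} converge only in the complementary region, so I must instead read \eqref{BCF} as an identity of rational functions in the $e^{\alpha_i}$ that is valid wherever the denominators $1-e^{-w(\alpha)}$ do not vanish. Since $w(\alpha)\ne 0$ forces $e^{-w(\alpha)}\ne 1$ at $t_i\in\,]0,1[$, all these denominators are nonzero there, so the evaluated identity holds and the termwise limit above is justified. Alternatively, one can bypass \eqref{BCF} entirely: the coefficient of $e^{-\beta}$ in $e^{-\lambda}w_\lambda$ is the indicator that $\lambda-\beta$ is a weight of $V(\lambda)$, which for each fixed $\beta\in Q^+$ equals $1$ once $\lambda$ is large and is dominated by the corresponding coefficient of $\prod_{\alpha\in S}(1-e^{-\alpha})^{-1}$, so dominated convergence yields the result; the same scheme, with Kostant's multiplicity formula controlling the coefficients of $e^{-\lambda}\chi_\lambda$, settles the first limit as well.
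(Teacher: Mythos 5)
Your proof is correct and follows essentially the same route as the paper: both isolate the $w=e$ term in the Weyl character formula and in Brion's formula \eqref{BCF}, and kill every other term via the bound $e^{w(\lambda)-\lambda}\le(\max_i e^{-\alpha_i})^{\mathrm{ht}(\lambda-w(\lambda))}\to 0$. You are in fact slightly more careful than the paper on two points it leaves implicit, namely the precise reason the height $\langle\lambda,\rho^\vee-w^{-1}\rho^\vee\rangle$ tends to infinity for $w\ne e$, and the legitimacy of evaluating \eqref{BCF} numerically at $e^{-\alpha_i}\in\,]0,1[$.
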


\begin{proof}
We have by the Weyl character formula%
\[
e^{-\lambda^{(k)}}\chi_{\lambda^{(k)}}=\frac{\sum_{w\in W}(-1)^{\ell
(w)}e^{w(\lambda^{(k)}+\rho)-\rho-\lambda^{(k)}}}{\prod_{\alpha\in R^{+}%
}(1-e^{-\alpha})}\,.
\]
Let $t=\mathrm{max}(t_{1},\ldots,t_{r})\in]0,1[$. Since $w(\lambda^{(k)}%
+\rho)-\rho-\lambda^{(k)}\in-Q^{+}$, we can set%
\[
w(\lambda^{(k)}+\rho)-\rho-\lambda^{(k)}=-\sum_{i=1}^{r}a_{i}(\lambda
^{(k)})\,\alpha_{i}\,,%
\]
where $a_{i}(\lambda^{(k)})\in\mathbb{Z}_{\geq0}$ for any $i=1,\ldots,r$. The
hypothesis $\underset{k\rightarrow+\infty}{\mathrm{lim}}\langle\lambda^{(k)}%
,\alpha_{i}\rangle=+\infty$ for any $i=1,\ldots,r$ implies that $\underset
{k\rightarrow+\infty}{\mathrm{lim}}\sum_{i=1}^{r}a_{i}(\lambda^{(k)})=+\infty$
for any $w\in W$ such that $w\neq1$. So we get for any such $w$%
\[
\underset{k\rightarrow+\infty}{\mathrm{lim}}e^{w(\lambda^{(k)}+\rho
)-\rho-\lambda^{(k)}}\leq\underset{k\rightarrow+\infty}{\mathrm{lim}}%
t^{\sum_{i=1}^{r}a_{i}(\lambda^{(k)})}=0\,.
\]
Since for $w=1$ we have $(-1)^{\ell(w)}e^{w(\lambda^{(k)}+\rho)-\rho
-\lambda^{(k)}}=1$, we get our first limit.

From (\ref{BCF}), we can write
\[
e^{-\lambda^{(k)}}w_{\lambda^{(k)}}=\sum_{w\in W}\frac{e^{w(\lambda
^{(k)})-\lambda^{(k)}}}{\prod_{\alpha\in S}(1-e^{-w(\lambda)})}%
\]
and by using similar arguments, only $w=1$ contributes when we consider the limit.
\end{proof}

Now set%
\[
\prod_{\alpha\in R^{+}\setminus S}\frac{1}{1-te^{-\alpha}}=\sum_{\beta\in
Q^{+}}\emph{M}_{t}(\beta)\,e^{-\beta}\,.
\]
In particular, $\emph{M}_{1}(\beta)$ is the number of decompositions of
$\beta$ as a sum of nonsimple positive roots. We get%
\begin{equation}
\prod_{\alpha\in R^{+}}\frac{1}{1-te^{-\alpha}}=\sum_{\beta\in Q^{+}}%
\emph{M}_{t}(\beta)\prod_{\alpha\in S}\frac{e^{-\beta}}{1-te^{-\alpha}}\,.
\label{decB(infinity)}%
\end{equation}
Proposition~\ref{prop_limit} and (\ref{BCF}) suggest to consider (\ref{decB(infinity)}) as a
$t$-analogue of (\ref{AtomDec}) when $\lambda\rightarrow+\infty$.

\subsection{Atomic decomposition of finite crystals}\label{sec:tatdec}

Let $B(\lambda)^{+}$ be the subset of $B(\lambda)$ of vertices with dominant weights.

\begin{definition}\label{crat}
An {\em atomic decomposition} of the crystal $B(\lambda)$ is a partition
\begin{equation}
B(\lambda)^{+}=%
{\bigsqcup_{h\in H(\lambda)}}
\mathbb{B}(\lambda,h)\,, \label{part}%
\end{equation}
where $H(\lambda)\subset B(\lambda)^{+}$, $h\in\mathbb{B}(\lambda,h)$ is a distinguished vertex, and each component $\mathbb{B}(\lambda,h)$ consists of exactly one vertex
of dominant weight $\nu$ for each $\nu\leq\mathrm{wt}(h)$.
\end{definition}

Observe that the cardinality of $\mathbb{B}(\lambda,h)$ is then independent of
$\lambda$, and if $\mathrm{wt}(h)=\mu$ we have
\begin{equation}\label{wmucomb}
w_{\mu}^{+}=\sum_{b\in\mathbb{B}(\lambda,h)}e^{\mathrm{wt}(b)}\,.
\end{equation}
If $B(\lambda)$ has an atomic decomposition, then clearly $\chi_{\lambda}^+$ has the atomic decomposition \eqref{AtomDec}, %
where $A_{\lambda,\mu}$ is the number of vertices of weight $\mu$ in
$H(\lambda).$

\begin{definition}\label{crtat}
A {\em $t$-atomic decomposition} of the crystal $B(\lambda)$ is an atomic decomposition together with a statistic $\mathrm{c}%
:H(\lambda)\rightarrow\mathbb{Z}_{\geq0}$ such that the following polynomials satisfy {\rm \eqref{atdec}}:
\begin{equation}\label{combat}
A_{\lambda,\mu}(t)=\sum_{\substack{h\in H(\lambda) \\ \mathrm{wt}(h)=\mu}}t^{\mathrm{c}%
(h)}\,.%
\end{equation}
\end{definition}

As we can see, the $t$-atomic 
decomposition property of $\chi_{\lambda}^+(t)$ is part of Definition~\ref{crtat}. Assuming that $B(\lambda)$ has a $t$-atomic decomposition, one can extend the statistic $\mathrm{c}$ to $B(\lambda)^+$
by setting 
\begin{equation}\label{propac}\mathrm{c}(b):=\mathrm{c}(h)+\langle\mathrm{wt}(h)-\mathrm{wt}(b),\rho^\vee\rangle\,,\;\;\;\;\;\mbox{for any $b\in\mathbb{B}(\lambda,h)$}\,.
\end{equation}
The $t$-analogues of the combinatorial formulas \eqref{wmucomb} and \eqref{domchar} immediately follow from Definition~\ref{crtat}:
\begin{align}
w_{\mu}^{+}(t)&=\sum_{b\in\mathbb{B}(\lambda,h)}t^{\mathrm{c}(b)-\mathrm{c}(h)}e^{\mathrm{wt}(b)}\,,\\
\chi_{\lambda}^+(t)&=\sum_{b\in B(\lambda)^+}t^{\mathrm{c}(b)}e^{\mathrm{wt}(b)}\,.\label{tcombcr}
\end{align}
Moreover, by comparing \eqref{tcombcr} with \eqref{domchar}, we obtain the following combinatorial formula for Kostka-Foulkes polynomials:
\begin{equation}\label{combkf}
K_{\lambda,\mu}(t)=\sum_{\substack{b\in B(\lambda) \\ \mathrm{wt}(b)=\mu}} t^{\mathrm{c}(b)}\,.
\end{equation}

To summarize, the existence of a $t$-atomic decomposition of a crystal is highly desirable because: (i) it implies the $t$-atomic decomposition of $\chi_{\lambda}^+(t)$ and of the Kostka-Foulkes polynomials $K_{\lambda,\mu}(t)$, which are now realized combinatorially; (ii) it leads to combinatorial formulas for both $K_{\lambda,\mu}(t)$ and the atomic polynomials $A_{\lambda,\mu}(t)$, namely \eqref{combkf} and \eqref{combat}, respectively. 


\section{Atomic decomposition of the crystal $B(\infty)$}\label{sec:inf}

In this section we assume that the Lie algebra $\mathfrak{g}$ is of type
$A_{n-1}$, $B_{n}$, $C_{n}$, $D_n$, and $G_{2}$. Let $r$ be the rank of $\mathfrak{g}$.

\subsection{Marginally large tableaux}

{\em Marginally large tableaux} were introduced by Hong and Lee \cite{HL1} in order to describe the
crystal $B(\infty)$ associated to $\mathfrak{g}$. 
Recall that they can be regarded as
$\mathfrak{g}$-tableaux (that is, of type $A_{n-1}$, $B_{n}$, $C_{n}$, $D_n$, or $G_{2}$) with $d$ rows such that%
\[
d=n\text{ in types }B_{n},\,C_{n}\,,\quad\quad d=n-1\text{ in types }A_{n-1},\,D_{n}\,,%
\quad\quad\text{and }d=2\text{ in type }G_{2}\,;%
\]
furthermore, for any $i=1,\ldots,d-1$, the number of boxes in row $i$ containing $i$ is
equal to $1$ plus the number of boxes in row $i+1$ (see the example below).
Write $\mathcal{T}(\infty)$ for the set of marginally large tableaux
associated to $\mathfrak{g}$.\ Since marginally large tableaux are special cases of 
tableaux for each type considered, the set $\mathcal{T}(\infty)$ comes with a
crystal action which is essentially the same as in the finite crystal up to
renormalization of rows, in order to insure that the obtained tableau is marginally
large. This renormalization is defined as follows. Consider a marginally large
$\mathfrak{g}$-tableau $T$, and let $T'$ be a $\mathfrak{g}$-tableau
obtained from $T$ by modifying a letter in row $i$. If $T'$ is not
marginally large, this means that we have modified the rightmost letter $i$ in row
$i$. Then the renormalization of $\widehat{T'}$ is the marginally
large tableau obtained from $T'$ by adding a letter $k$ in each row
$k$ between $1$ and $i$, the others rows remaining unchanged. One can then
define crystal operators $\tilde{F}_{1},\ldots,\tilde{F}_{r}$ on
$\mathcal{T}(\infty)$ by setting%
\[
\tilde{F}_{i}(T)=\widehat{\tilde{f}_{i}(T)}\,,%
\]
where $\tilde{f}_{i}$ is the ordinary Kashiwara crystal operator on the
$\mathfrak{g}$-tableau $T$. It was established in \cite{HL1} that the crystal
structure on $\mathcal{T}(\infty)$ obtained in this way is isomorphic to $B(\infty)$.

\begin{example}{\rm 
If $\mathfrak{g}$ is of type $A_{3}$, then
\[
T=%
\begin{tabular}
[c]{|l|l|llllllll}\hline
$1$ & $1$ & $1$ & \multicolumn{1}{|l}{$1$} & \multicolumn{1}{|l}{$1$} &
\multicolumn{1}{|l}{$1$} & \multicolumn{1}{|l}{$1$} & \multicolumn{1}{|l}{$3$}
& \multicolumn{1}{|l}{$5$} & \multicolumn{1}{|l|}{$5$}\\\hline
$2$ & $2$ & $2$ & \multicolumn{1}{|l}{$3$} & \multicolumn{1}{|l}{$3$} &
\multicolumn{1}{|l}{$4$} & \multicolumn{1}{|l}{} &  &  & \\\cline{1-6}%
$3$ & $5$ &  &  &  &  &  &  &  & \\\cline{1-2}%
\end{tabular}
\]
is a marginally large tableau, and
\[
\tilde{f}_{2}(T)=%
\begin{tabular}
[c]{|l|l|lllllllll}\hline
$1$ & $1$ & $1$ & \multicolumn{1}{|l}{$1$} & \multicolumn{1}{|l}{$1$} &
\multicolumn{1}{|l}{$1$} & \multicolumn{1}{|l}{$1$} & \multicolumn{1}{|l}{$1$}
& \multicolumn{1}{|l}{$3$} & \multicolumn{1}{|l}{$5$} &
\multicolumn{1}{|l|}{$5$}\\\hline
$2$ & $2$ & $2$ & \multicolumn{1}{|l}{$3$} & \multicolumn{1}{|l}{$3$} &
\multicolumn{1}{|l}{$3$} & \multicolumn{1}{|l}{$4$} & \multicolumn{1}{|l}{} &
&  & \\\cline{1-2}\cline{1-7}%
$3$ & $5$ &  &  &  &  &  &  &  &  & \\\cline{1-2}%
\end{tabular}
\,.
\]}
\end{example}

We define a multisegment as a multiset of positive roots.\ We can write a
multisegment $\mathfrak{m}$ as%
\[
\mathfrak{m}=\sum_{\alpha\in R^{+}}m_{\alpha}\,\alpha\,,
\]
which means that the multiset $\mathfrak{m}$ contains $m_{\alpha}$ times the
positive root $\alpha$. Let $\mathfrak{M}$ be the set of $\mathfrak{g}%
$-multisegments. In \cite[Proposition~3.7]{LeSa}, a bijection $\Xi
$\footnote{As far as we are aware, such a bijection is not known in types
$E_{6}$, $E_{7}$, $E_{8}$, and $F_{4}$.} is given from $\mathcal{T}(\infty)$ to $\mathfrak{M}%
$. This bijection depends on the type considered. For example, in type
$A_{n-1}$, for any positive root $\alpha_{ij}=\varepsilon_{i}-\varepsilon
_{j}$ with $1\leq i<j\leq n$ and any marginally large tableau $T$, the integer
$m_{\alpha_{i,j}}$ is equal to the number of letters $j$ in the $i$-th row of
$T$. For any $T\in\mathcal{T}(\infty)$ such that $\Xi(T)=\sum_{\alpha\in
R^{+}}m_{\alpha}\,\alpha$, write $\left\vert T\right\vert =\sum_{\alpha\in
R^{+}}m_{\alpha}$.

\begin{example}{\rm 
Resuming the previous example, we get%
\[
\Xi(T)=\alpha_{1,3}+2\alpha_{1,5}+2\alpha_{2,3}+\alpha_{2,4}+\alpha_{3,5}%
\]
and $\left\vert T\right\vert =7$.}
\end{example}

\subsection{Modified crystal operators and atomic decomposition of $B(\infty
)$}

We refer to \cite{LeSa} for a complete description of the bijection $\Xi$ in
each case. In the sequel, we only need the following two properties of
the map $\Xi$.

\begin{enumerate}
\item Starting from any marginally large tableau $T$ and any simple root
$\alpha_{i},i=1,\ldots,r$, there is a unique marginally large tableau
$T'$ such that
\[
\Xi(T')=\Xi(T)+\alpha_{i}\,.
\]
We then set $T'=\mathrm{F}_{i}(T)$.

\item For any marginally large tableau $T'$ and any $i=1,\ldots,n$
such that $\Xi(T')=\sum_{\alpha\in R^{+}}m_{\alpha}\,\alpha$ with
$m_{\alpha_{i}}>0$, there exists a (unique) marginally large tableau $T$ such
that $T'=\mathrm{F}_{i}(T)$.
\end{enumerate}

More precisely, the construction of $T'$ from $T$ is as follows.

\begin{itemize}
\item For any $i=1,\ldots,r-1$, $T'$ is obtained from $T$ by replacing
the rightmost letter $i$ located in row $i$ by a letter $i+1$ and then by
renormalizing if needed.

\item For $i=2$ in type $G_{2}$, $T'$ is obtained from $T$ by
replacing the rightmost letter $2$ located in row $2$ by a letter $3$.

\item For $i=n$ in type $C_{n}$, $T'$ is obtained from $T$ by
replacing the rightmost letter $n$ located in row $n$ by a letter
$\overline{n}$.

\item For $i=n$ in type $D_{n}$, $T'$ is obtained from $T$ by
replacing the rightmost letter $n-1$ located in row $n-1$ by a letter
$\overline{n}$.

\item For $i=n$ in type $B_{n}$, the rows of $T$ can only contain one letter
$0$. Then we have:

\begin{itemize}
\item[-] if $0$ belongs to row $n$, $T'$ is obtained from $T$ by
replacing this letter $0$ by a letter $\overline{n}$;

\item[-] if $0$ does not belong to row $n$, $T'$ is obtained from $T$
by replacing the rightmost letter $n$ located in row $n$ by a letter $0$.
\end{itemize}
\end{itemize}

Observe that $\mathrm{F}_{i}(T)$ is a marginally large tableau for any
$i=1,\ldots,n$ and any $T\in\mathcal{T}(\infty)$. We also define the operators
$\mathrm{E}_{i}$, $i=1,\ldots,n$, such that $\mathrm{E}_{i}(T_{1})=T_{2}$ if there
exists $T_{2}\in\mathcal{T}(\infty)$ satisfying $\mathrm{F}_{i}(T_{2})=T_{1}$,
and $\mathrm{E}_{i}(T_{1})=0$ otherwise. The operators $\mathrm{F}_{i}$ and
$\mathrm{E}_{i}$, $i=1,\ldots,n$, are the modified crystal operators. We have
$\mathrm{F}_{i}(T)\neq\tilde{F}_{i}(T)$ and $\mathrm{E}_{i}(T)\neq\tilde
{F}_{i}(T)$ in general. Furthermore, by property (2), we have $\mathrm{E}_{i}(T)\neq0$
for any $T$ such that $m_{\alpha_{i}}>0$. Now we can endow $\mathcal{T}%
(\infty)$ with a new colored directed graph structure $\mathbb{B}(\infty)$ such
that $T\overset{i}{\dashrightarrow}T'$ if and only if $T^{\prime
}=\mathrm{F}_{i}(T)$. A source vertex for this structure is a marginally large
tableau $T$ such that $\mathrm{E}_{i}(T)=0$ for any $i=1,\ldots,r$. Let us
denote by $\mathcal{ST}(\infty)$ the set of source vertices in $\mathcal{T}%
(\infty)$.

\begin{theorem}\hfill
\begin{enumerate}
\item We have $S\in\mathcal{ST}(\infty)$ if and only if $\Xi(T)=\sum
_{\alpha\in R^{+}\setminus S}m_{\alpha}\,\alpha,$ that is, when $m_{\alpha_{i}%
}=0$ for any $i=1,\ldots,r$.

\item Each connected component $\mathbb{B}$ of the graph $\mathbb{B}(\infty)$
contains a unique source vertex $S$. We then write $\mathbb{B}=\mathbb{B}(S)$,
and say that $\mathbb{B}(S)$ is an atom of $B(\infty)$.

\item For any $S\in\mathcal{ST}(\infty)$, the vertices of $\mathbb{B}(S)$ have
different weights, and
\begin{equation}
\sum_{T\in\mathbb{B}(S)}t^{\left\vert T\right\vert }e^{\mathrm{wt}(T)}%
=\frac{t^{\left\vert S\right\vert }e^{\mathrm{wt}(S)}}{\prod_{\alpha\in
S}(1-te^{-\alpha})}\,.\label{charB(S)}%
\end{equation}
\end{enumerate}
\end{theorem}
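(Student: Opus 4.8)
The plan is to prove the three assertions in sequence, as they build on one another. Throughout, I would work with the multisegment description via $\Xi$, since the modified operators $\mathrm{F}_i$ and $\mathrm{E}_i$ were defined precisely so that $\Xi(\mathrm{F}_i(T))=\Xi(T)+\alpha_i$ and $\mathrm{E}_i$ removes an $\alpha_i$; thus in ``multisegment coordinates'' the graph $\mathbb{B}(\infty)$ has an extremely transparent structure. For part (1), a source vertex is by definition one with $\mathrm{E}_i(T)=0$ for all $i=1,\ldots,r$; by property (2) of $\Xi$, $\mathrm{E}_i(T)\neq 0$ exactly when $m_{\alpha_i}>0$. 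Hence $\mathrm{E}_i(T)=0$ for all $i$ is equivalent to $m_{\alpha_i}=0$ for all simple roots $\alpha_i$, i.e.\ $\Xi(T)=\sum_{\alpha\in R^+\setminus S}m_\alpha\,\alpha$. This is essentially immediate from the stated properties.

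For part (2), I would transport the whole situation through $\Xi$ to the set $\mathfrak{M}$ of multisegments. A directed edge $T\overset{i}{\dashrightarrow}T'$ corresponds exactly to adding one copy of the simple root $\alpha_i$ to the multiset $\Xi(T)$. Therefore, within a connected component, two multisegments differ only in their simple-root multiplicities $(m_{\alpha_1},\ldots,m_{\alpha_r})$, while the nonsimple multiplicities $m_\alpha$ ($\alpha\in R^+\setminus S$) are a complete invariant of the component. The unique source in a given component is obtained by applying $\mathrm{E}_i$ repeatedly until all $m_{\alpha_i}=0$; this terminates because each $\mathrm{E}_i$ strictly decreases $\sum_i m_{\alpha_i}\in\mathbb{Z}_{\geq 0}$, and the resulting vertex is independent of the order of the $\mathrm{E}_i$ since they simply decrement independent coordinates. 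Uniqueness then follows because the source is characterized by its invariant $(m_\alpha)_{\alpha\in R^+\setminus S}$, and two vertices lie in the same component if and only if they share this invariant. I would phrase this as: the map $T\mapsto(m_{\alpha_1},\ldots,m_{\alpha_r})$ gives a bijection between a component $\mathbb{B}(S)$ and $\mathbb{Z}_{\geq 0}^r$, with the source corresponding to the origin.

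For part (3), the weight computation is what makes the statement useful. Adding $\alpha_i$ to the multisegment changes the weight by $-\alpha_i$ (since $\mathrm{wt}(T)=\mathrm{wt}(S)-\sum_i m_{\alpha_i}\alpha_i$, reflecting that each positive root in the multisegment contributes $-\alpha$ to the weight), and changes $\lvert T\rvert$ by $+1$. Using the bijection from part (2), I would write $\mathbb{B}(S)=\{T_{(k_1,\ldots,k_r)}\}$ with $\mathrm{wt}(T_{(k_1,\ldots,k_r)})=\mathrm{wt}(S)-\sum_i k_i\alpha_i$ and $\lvert T_{(k_1,\ldots,k_r)}\rvert=\lvert S\rvert+\sum_i k_i$. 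That the weights are distinct follows because the $\alpha_i$ are linearly independent. The generating function then factors as a product of geometric series:
\begin{align*}
\sum_{T\in\mathbb{B}(S)}t^{\lvert T\rvert}e^{\mathrm{wt}(T)}
&=t^{\lvert S\rvert}e^{\mathrm{wt}(S)}\sum_{(k_1,\ldots,k_r)\in\mathbb{Z}_{\geq 0}^r}\prod_{i=1}^r\left(te^{-\alpha_i}\right)^{k_i}\\
&=\frac{t^{\lvert S\rvert}e^{\mathrm{wt}(S)}}{\prod_{\alpha\in S}(1-te^{-\alpha})}\,,
\end{align*}
which is exactly \eqref{charB(S)}.

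The genuine content, and the step I expect to require the most care, is establishing that the modified operators $\mathrm{F}_i$ faithfully realize ``$+\alpha_i$ on multisegments'' as a well-defined, invertible-where-nonzero operation on $\mathcal{T}(\infty)$ — that is, a rigorous reconciliation of the explicit tableau-level rules (the bulleted case analysis, including the renormalization and the special behavior of $\mathrm{F}_n$ in types $B_n,C_n,D_n$ and $\mathrm{F}_2$ in $G_2$) with the clean multisegment picture. Properties (1) and (2) of $\Xi$ assert exactly the existence and uniqueness of the relevant preimages and images, so I would lean on them rather than reverifying the combinatorics type-by-type; granting these, the independence and commutativity of the coordinate decrements, and hence the product factorization, follow cleanly. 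The only remaining subtlety is confirming the weight formula $\mathrm{wt}(\mathrm{F}_i(T))=\mathrm{wt}(T)-\alpha_i$ directly from the tableau rules (each rule either raises an entry by one step in the crystal alphabet or, for the folded cases, effects the corresponding weight shift by $-\alpha_i$), which I would verify once in each case.
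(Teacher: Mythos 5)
Your proposal is correct and follows essentially the same route as the paper's own proof: both arguments reduce everything to the multisegment coordinates via $\Xi$, use properties (1) and (2) of $\Xi$ together with the commutativity of the operators $\mathrm{E}_i$ (equivalently, the independence of the simple-root multiplicities) to identify the unique source and parametrize a component by $\mathbb{Z}_{\ge 0}^r$, and then obtain \eqref{charB(S)} as a product of geometric series from the weight formula. Your explicit framing of the component as a bijection with $\mathbb{Z}_{\ge 0}^r$ and your remark that the nonsimple multiplicities form a complete invariant are just slightly more detailed packagings of the same argument.
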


\begin{proof}
The first part follows from the fact that
\[
\Xi(\mathrm{E}_{i}(T))=\Xi(T)-\alpha_{i}\,,%
\]
for any $i=1,\ldots,r$. For the second part, we observe that, for any
marginally large tableau $T$ and for any $i\neq j$ in $\{1,\ldots,r\}$, we have
$\mathrm{E}_{i}\mathrm{E}_{j}(T)\neq0$ and $\mathrm{E}_{i}\mathrm{E}%
_{j}(T)\neq0$ if and only if $m_{\alpha_{i}}>0$ and $m_{\alpha_{j}}>0$; in this case, we have
 $\mathrm{E}_{i}\mathrm{E}_{j}(T)=\mathrm{E}_{i}\mathrm{E}_{j}(T)$.
Thus
\[
S=\prod_{i=1}^{r}\mathrm{E}_{i}^{m_{\alpha_{i}}}(T)
\]
is the unique source vertex of the connected component corresponding to $T$,
and does not depend on the order in which the operators $\mathrm{E}_{i}$ are
applied in the right hand side. For the third part, observe we have
\[
\mathbb{B}(S)=\left\{  \prod_{i=1}^{r}\mathrm{F}_{i}^{m_{\alpha_{i}}}(S)\mid
m_{\alpha_{i}}\in\mathbb{Z}_{\geq0}\right\}\,,
\]
where the operators $\mathrm{F}_{i},$ $i=1,\ldots,r$ commute.\ Since we have
\[
\mathrm{wt}\left(  \prod_{i=1}^{r}\mathrm{F}_{i}^{m_{\alpha_{i}}}(S)\right)
=\mathrm{wt}(S)-\sum_{i=1}^{r}m_{\alpha_{i}}\alpha_{i}\,,%
\]
the weights of the vertices in $\mathbb{B}(S)$ are all distinct. This also
yields the desired equality (\ref{charB(S)}).
\end{proof}

\begin{corollary}
The partition
\[
\mathbb{B}(\infty)=%
{\bigsqcup_{S\in\mathcal{ST}(\infty)}}
\mathbb{B}(S)
\]
is a $t$-atomic decomposition of $B(\infty);$ in other words, we have
\[
\prod_{\alpha\in R^{+}}\frac{1}{1-te^{-\alpha}}=\sum_{\beta\in Q^{+}}%
{M}_{t}(\beta)\prod_{\alpha\in S}\frac{e^{-\beta}}{1-te^{-\alpha}}\,,%
\]
where
\[
{M}_{t}(\beta)=\sum_{\substack{S\in\mathcal{ST}(\infty) \\ \mathrm{wt}(S)=\beta
}}t^{\left\vert S\right\vert }\,.
\]
\end{corollary}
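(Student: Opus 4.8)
The plan is to deduce the corollary from the preceding Theorem by a direct summation, so that essentially no new combinatorics is required beyond careful bookkeeping. First I would observe that the partition $\mathbb{B}(\infty)=\bigsqcup_{S\in\mathcal{ST}(\infty)}\mathbb{B}(S)$ is immediate from part~(2) of the Theorem: every connected component of the graph $\mathbb{B}(\infty)$ contains exactly one source vertex, so indexing the components by their sources partitions all of $\mathcal{T}(\infty)$. This is precisely the analogue of the partition \eqref{part} in the $B(\infty)$ setting, with the atoms $\mathbb{B}(S)$ playing the role of the components $\mathbb{B}(\lambda,h)$.

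Next I would establish the asserted character identity by summing the atom-wise formula \eqref{charB(S)} over all $S\in\mathcal{ST}(\infty)$. On the left this produces the full graded character $\sum_{T\in\mathcal{T}(\infty)}t^{\left\vert T\right\vert}e^{\mathrm{wt}(T)}$. I would evaluate it through the bijection $\Xi\colon\mathcal{T}(\infty)\to\mathfrak{M}$: a multisegment is an arbitrary choice of multiplicities $(m_{\alpha})_{\alpha\in R^{+}}$, and under $\Xi$ one has $\left\vert T\right\vert=\sum_{\alpha}m_{\alpha}$ together with $\mathrm{wt}(T)=-\sum_{\alpha}m_{\alpha}\alpha$. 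Hence the sum factors over positive roots as $\sum_{T}t^{\left\vert T\right\vert}e^{\mathrm{wt}(T)}=\prod_{\alpha\in R^{+}}\sum_{m\ge 0}(te^{-\alpha})^{m}=\prod_{\alpha\in R^{+}}(1-te^{-\alpha})^{-1}$, which is the left-hand side of the displayed identity; at $t=1$ this is consistent with the formula for $\mathrm{char}\,B(\infty)$ recalled earlier.

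On the right-hand side of \eqref{charB(S)}, the denominator $\prod_{\alpha\in S}(1-te^{-\alpha})$ runs over the simple roots and is therefore independent of $S$, so I would factor it out and reduce to computing $\sum_{S\in\mathcal{ST}(\infty)}t^{\left\vert S\right\vert}e^{\mathrm{wt}(S)}$. By part~(1) of the Theorem the source vertices correspond exactly to the multisegments $\Xi(S)=\sum_{\alpha\in R^{+}\setminus S}m_{\alpha}\alpha$ supported away from the simple roots; the same factorization as above, now restricted to $R^{+}\setminus S$, gives $\sum_{S}t^{\left\vert S\right\vert}e^{\mathrm{wt}(S)}=\prod_{\alpha\in R^{+}\setminus S}(1-te^{-\alpha})^{-1}=\sum_{\beta\in Q^{+}}M_{t}(\beta)e^{-\beta}$, the last equality being the definition of $M_{t}(\beta)$ recalled just before \eqref{decB(infinity)}. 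Reinstating the factored denominator reproduces the right-hand side of \eqref{decB(infinity)}, which matches the left-hand side computed above; comparing the two expressions for $\sum_{S}t^{\left\vert S\right\vert}e^{\mathrm{wt}(S)}$ then yields the stated description of $M_{t}(\beta)$ in terms of source vertices.

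I do not anticipate a genuine obstacle, since the substantive content---the commutation of the modified operators $\mathrm{E}_{i}$, the existence and uniqueness of a source vertex in each component, and the distinctness of the weights within an atom---is already encapsulated in the Theorem. The only points demanding care are the weight identity $\mathrm{wt}(T)=-\sum_{\alpha}m_{\alpha}\alpha$ under $\Xi$ and the matching of the grouping of source vertices against the definition of $M_{t}(\beta)$ coming from \eqref{decB(infinity)} (where one should keep track of the sign convention $\mathrm{wt}(S)\in -Q^{+}$); once these identifications are in place, summing \eqref{charB(S)} over sources yields the statement.
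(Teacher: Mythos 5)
Your proposal is correct and follows exactly the intended route: the paper gives no separate proof of the corollary because it is the immediate consequence of summing the atom-wise identity \eqref{charB(S)} over all sources and identifying both sides via the bijection $\Xi$ (parts (1) and (2) of the theorem supplying the partition and the description of the sources), which is precisely what you do. Your two flagged points of care --- the weight identity $\mathrm{wt}(T)=-\sum_{\alpha}m_{\alpha}\alpha$ under $\Xi$ and the sign convention $\mathrm{wt}(S)\in-Q^{+}$ in the formula for $M_{t}(\beta)$ --- are indeed the only bookkeeping subtleties.
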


\section{The partial order on dominant weights}\label{sec:pod}
Before we consider the atomic decomposition of finite crystals, we need some information about the partial order on dominant weights that was defined in Section~\ref{sec:backgr}. In full generality, this poset was first studied in \cite{stepod}, so we will recall some results from this paper. 

The components of the dominant weight poset are lattices. Each cocover is of the form $\mu\gtrdot\mu-\alpha$, where $\alpha$ is a positive root, so we can represent it as a downward edge in the Hasse diagram labeled by $\alpha$. The cocovers were completely described in \cite[Theorem~2.8]{stepod}. Fixing a dominant weight $\lambda$, we will be interested in the lower order ideal determined by $\lambda$. This is known to be an interval $[\widehat{0},\lambda]$, with $\widehat{0}$ a minimal element of the dominant weight poset. 

\subsection{Type $A_{n-1}$} \label{dwpa}

Now $\lambda$ is a partition $(\lambda_1\ge\ldots\ge\lambda_{n-1}\ge 0)$, and let $N=|\lambda|:=\sum_i\lambda_i$. We identify partitions with their Young diagrams, and we denote a partition with $p$ parts $a$, $q$ parts $b$ ($a\ge b$) etc. by $(a^pb^q\ldots)$. As explained below, the interval $[\widehat{0},\lambda]$ mentioned above can be identified with the similar interval in the poset of partitions of $N$ with the {\em dominance order}; the latter is defined by $(\mu_1,\mu_2,\ldots)\le(\nu_1,\nu_2,\ldots)$ if and only if $\sum_{i=1}^j\mu_i\le\sum_{i=1}^j\nu_j$, for any $j$. As a partition of $N$, the element $\widehat{0}$ is the partition with $\lfloor N/n\rfloor$ columns of height $n$, and the last column of height $p:=N\;\mbox{mod}\;n$; as an element of the dominant weight poset, $\widehat{0}$ is the fundamental weight $\omega_p$. We identify a partition of $N$ with the partition obtained from it by removing all columns of height $n$ (if a partition of $N$ is greater or equal to $\widehat{0}$, then it has no columns of height larger than $n$). 

The dominance order on partitions of $N$ was first studied in \cite{brylip}, so we recall some results in this paper. Conjugation of partitions is an antiautomorphism. The cocovers $\mu\gtrdot\mu-\alpha_{ij}$, where $\alpha_{ij}=\varepsilon_i-\varepsilon_j$ is a positive root (i.e., $i<j$), are labeled by $(i,j)$. It turns out that there are only two types of cocovers, namely:
\begin{equation}\label{cocovera}
(\ldots ab \ldots)\gtrdot(\ldots(a-1)(b+1)\ldots)\,,\;\;\;\;\;\mbox{and}\;\;\;\;\;(\ldots (a+1)a^p(a-1)\ldots)\gtrdot(\ldots a^{p+2}\ldots)\,,
\end{equation}
where in the first case $a\ge b+2$ and the cocover is labeled by a simple root. These types are referred to as (*) and (**), respectively, while a cocover of type (**) which is not of type (*) is called {\em proper}. 

An important result in \cite{brylip} concerns the structure of short intervals in the dominance order. To state it, we need some more definitions. Consider two distinct cocovers $\mu\gtrdot\nu$ and $\mu\gtrdot\pi$ of a partition $\mu$, which are labeled $(i,j)$ and $(k,l)$, where we assume $i<k$. These cocovers can only have one of the following relative positions (in terms of their labels): (i) {\em nonoverlapping} if $j<k$; (ii) {\em partially overlapping} if $j=k$; (iii) {\em fully overlapping} if $k=j-1$. By \cite[Proposition~3.2]{brylip}, the interval $[\nu\wedge\pi,\mu]$ can only have one of the following structures; the two cocovers above are shown in the diagrams below in bold.

{\bf Case A1:} cocovers which are (a) nonoverlapping; (b) partially overlapping and both of type (*); (c) fully overlapping and both proper of type (**). As subcase (a) is easy, only subcases (b) and (c) are represented in the diagrams below. 

In subcase (b), we have $a\ge c+2$ and $c\ge e+2$, while $i$ is the position of $a$ in the partition $\mu$. 
\[\scriptstyle
{\xymatrix{
&{\ldots ace\ldots}\ar[dl]_{(i,i+1)}\ar[dr]^{(i+1,i+2)}\ar@<0.2mm>[dl]\ar@<0.4mm>[dl]\ar@<0.2mm>[dr]\ar@<0.4mm>[dr]\\
{\ldots (a-1)(c+1)e\ldots}\ar[dr]_{(i+1,i+2)}&&{\ldots a(c-1)(e+1)\ldots}\ar[dl]^{(i,i+1)}\\
&{\ldots (a-1)c(e+1)\ldots}
} }
\]

In subcase (c), we have $b=a-1$, $c=b-1$, $d=c-1$, $p,q\ge 1$, while $i$ is the position of $a$, $j=i+p+1$ is the position of the first $c$, and $k=j+q$ is the position of $d$.
\[\scriptstyle
{\xymatrix{
&{\ldots ab^pc^qd\ldots}\ar[dl]_{(i,j)}\ar[dr]^{(j-1,k)}\ar@<0.2mm>[dl]\ar@<0.4mm>[dl]\ar@<0.2mm>[dr]\ar@<0.4mm>[dr]\\
{\ldots b^{p+2}c^{q-1}d\ldots}\ar[dr]_{(j,k)}&&{\ldots ab^{p-1}c^{q+2}\ldots}\ar[dl]^{(i,j-1)}\\
&{\ldots b^{p+1}c^{q+1}\ldots}
} }
\]

{\bf Case A2:} partially overlapping cocovers, where (a) the first is of type (*) and the second proper of type (**); (b) vice versa. 

In subcase (a), we have $a\ge c+2$, $d=c-1$, $e=d-1$, $p\ge 1$, while $i$ is the position of $a$ in the partition $\mu$ and $j=i+p+2$ is the position of $e$. 
\[\scriptstyle
{\xymatrix{
&{\ldots acd^pe\ldots}\ar[dl]_{(i,i+1)}\ar[ddr]^{(i+1,j)}\ar@<0.2mm>[dl]\ar@<0.4mm>[dl]\ar@<0.2mm>[ddr]\ar@<0.4mm>[ddr]\\
{\ldots(a-1)(c+1)d^pe\ldots}\ar[dd]_{(i+1,i+2)}\\
&&{\ldots ad^{p+2}\ldots}\ar[ddl]^{(i,i+1)}\\
{\ldots(a-1)c^2d^{p-1}e\ldots}\ar[dr]_{(i+2,j)}\\
&{\ldots(a-1)cd^{p+1}\ldots}} }
\]

In subcase (b), we have $b=a-1$, $c=b-1$, $c\ge e+2$, $p\ge 1$, while $i$ is the position of $a$ and $j=i+p+1$ is the position of $c$. 
\[\scriptstyle
{\xymatrix{
&{\ldots ab^pce\ldots}\ar[ddl]_{(i,j)}\ar[dr]^{(j,j+1)}\ar@<0.2mm>[ddl]\ar@<0.4mm>[ddl]\ar@<0.2mm>[dr]\ar@<0.4mm>[dr]\\
&&{\ldots ab^p(c-1)(e+1)\ldots}\ar[dd]^{(j-1,j)}\\
{\ldots b^{p+2}e\ldots}\ar[ddr]_{(j,j+1)}\\
&&{\ldots ab^{p-1}c^2(e+1)\ldots}\ar[dl]^{(i,j-1)}\\
&{\ldots b^{p+1}c(e+1)\ldots}} }
\]

{\bf Case A3:} partially overlapping cocovers, both proper of type (**). Here $b=a-1$, $c=b-1$, $d=c-1$, $e=d-1$, $p,q\ge 1$, while $i$ is the position of $a$ in the partition $\mu$, $j=i+p+1$ is the position of $c$, and $k=j+q+1$ is the position of $e$. 
\[\scriptstyle
{\xymatrix{
&{\ldots ab^pcd^qe\ldots } \ar[d]_{(j-1,j+1)}\ar[dl]_{(i,j)}\ar[dr]^{(j,k)}\ar@<0.2mm>[dl]\ar@<0.4mm>[dl]\ar@<0.2mm>[dr]\ar@<0.4mm>[dr]    \\
{\ldots b^{p+2}d^qe\ldots } \ar[d]_{(j,j+1)} &{\ldots {ab^{p-1}c^3d^{q-1}e}\ldots } \ar[dl]^{(i,j-1)} \ar[dr]_{(j+1,k)} &{\ldots ab^pd^{q+2}\ldots} \ar[d]^{(j-1,j)}\\
{\ldots b^{p+1}c^2d^{q-1}e\ldots } \ar[dr]_{(j+1,k)}&& {\ldots ab^{p-1}c^2d^{q+1}\ldots} \ar[dl]^{(i,j-1)}\\
&{\ldots b^{p+1}cd^{q+1}\ldots }
} }
\]

Due to the conjugation automorphism of the dominance order, given two distinct covers $\mu\lessdot\nu$ and $\mu\lessdot\pi$ of $\mu$, the isomorphism type of the interval $[\mu,\nu\vee\pi]$ is always given by one of the above graphs turned upside down. Observe nevertheless the type (*) or (**) is not preserved by conjugation in general. So we have the corresponding Cases A$1'-$A$3'$. In fact, the structure of the intervals in Cases A$1'$ and A$2'$ is identical with that in Cases A1 and A2, respectively; however, Case A3 leads to the different structure shown below; the two covers above are again shown in bold. 

{\bf Case A3$\mathbf{'}$.} Here $a\ge c+1$, $d=c-1$, $d\ge f+1$, while $i$ is the position of $a$ in the partition $\mu$ at the bottom. 
\[\scriptstyle
{\xymatrix{
&{\ldots (a+1)cd(f-1)\ldots }  \ar[dl]_{(i+2,i+3)} \ar[dr]^{(i,i+1)} \\
{\ldots (a+1)c(d-1)f \ldots } \ar[d]_{(i+1,i+2)}\ar[dr]^{(i,i+1)}&&{\ldots a(c+1)d(f-1)\ldots} \ar[dl]_{(i+2,i+3)}\ar[d]^{(i+1,i+2)}  \\
{\ldots (a+1)d^2f\ldots }\ar[dr]_{(i,i+1)}\ar@<0.2mm>[dr]\ar@<0.4mm>[dr] &  {\ldots a(c+1)(d-1)f \ldots}\ar[d]_{(i+1,i+2)} & {\ldots ac^2(f-1)\ldots}\ar[dl]^{(i+2,i+3)}\ar@<0.2mm>[dl]\ar@<0.4mm>[dl]\\
&{\ldots acdf\ldots }
} }
\]

\subsection{Type $C_n$}\label{domwc}

Now $\lambda$ is a partition $(\lambda_1\ge\ldots\ge\lambda_{n}\ge 0)$, and we use the same notation as in Section~\ref{dwpa}. It is easy to see that the minimal element $\widehat{0}$ mentioned above (i.e., the unique minimal element below $\lambda$) is either $0$ or $\omega_1=(10^{n-1})$, depending on $|\lambda|$ being even or odd, respectively.
By \cite[Theorem~2.8]{stepod}, a cocover in the corresponding partial order on dominant weights is either of the same form \eqref{cocovera} as in type $A$, or has one of the following three forms:
\begin{equation}\label{cocoverc}
(\ldots\, 1^20^{n-k-1})\gtrdot(\ldots\, 0^{n-k+1})\,,\;\;\;\;\;(\ldots\, 21)\gtrdot(\ldots\, 10)\,,\;\;\;\;\;(\ldots\,(a+2))\gtrdot(\ldots\,a)\,,
\end{equation}
where $1\le k\le n-1$; these three cocovers are labeled by the roots $\varepsilon_k+\varepsilon_{k+1}$, $\varepsilon_{n-1}+\varepsilon_n$, and $2\varepsilon_n$, respectively. For simplicity, we denote a root $\varepsilon_i+\varepsilon_j$ by $\alpha_{i\overline{\jmath}}$ or $(i,\overline{\jmath})$. 
In the sequel, we will see that it is desirable for the last cover in \eqref{cocoverc} never to appear; the reason is that $\alpha_n$ is a long root, and hence does not appear in the $W$-orbit of $\alpha_1$. In fact, there is an easy condition which simplifies the setup even more.

\begin{proposition}\label{stable-crit}
If $n>(|\lambda|+1)/2$, then the first cocover in {\rm \eqref{cocoverc}} is the only one which can appear in the Hasse diagram of the interval $[\widehat{0},\lambda]$ beside the type $A$ cocovers in {\rm \eqref{cocovera}}.
\end{proposition}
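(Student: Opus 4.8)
The plan is to translate the rank hypothesis into an upper bound on the size of the partitions occurring in the interval, and then to observe that the top vertices of the two cocovers we must exclude are forced to be too large. Throughout, for a dominant weight $\mu=(\mu_1\ge\ldots\ge\mu_n\ge 0)$ I write $|\mu|=\sum_j\mu_j$ for its size.

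First I would rewrite the assumption $n>(|\lambda|+1)/2$ in the equivalent integer form $|\lambda|\le 2n-2$. The key preliminary step is then the size-monotonicity of the dominance order: every weight $\mu$ in the interval $[\widehat{0},\lambda]$ satisfies $|\mu|\le|\lambda|$. Indeed, $\mu\le\lambda$ means that $\lambda-\mu$ is a nonnegative integer combination of positive roots, and each positive root of type $C_n$ has nonnegative coordinate-sum: namely $0$ for $\varepsilon_i-\varepsilon_j$, and $2$ for $\varepsilon_i+\varepsilon_j$ and $2\varepsilon_i$. Hence subtracting positive roots cannot increase $\sum_j\mu_j$, so that $|\mu|\le|\lambda|\le 2n-2$ for every $\mu\in[\widehat{0},\lambda]$. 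In particular, the top vertex $\mu$ of any cocover appearing in the Hasse diagram of $[\widehat{0},\lambda]$ obeys $|\mu|\le 2n-2$.

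Next I would rule out the second and third cocovers of \eqref{cocoverc} by showing that their top vertices violate this bound. For the second cocover, the top vertex $\mu$ has $\mu_{n-1}=2$ and $\mu_n=1$, so the first $n-1$ parts are all $\ge 2$, giving $|\mu|\ge 2(n-1)+1=2n-1>2n-2$. For the third cocover, the top vertex has $\mu_n=a+2\ge 2$ (since $a\ge 0$ is forced by the lower vertex being a partition), so all $n$ parts are $\ge 2$ and $|\mu|\ge 2n>2n-2$. Both inequalities contradict the size bound, so neither cocover can occur inside $[\widehat{0},\lambda]$. By contrast, the first cocover of \eqref{cocoverc} lowers two parts equal to $1$ down to $0$, imposing no part $\ge 2$, and survives; this establishes the claim.

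The argument is essentially a counting estimate, so I do not expect a serious obstacle. The only point requiring genuine care is the coordinate-sum monotonicity underlying $|\mu|\le|\lambda|$, i.e., verifying that \emph{all} positive roots of type $C_n$ have nonnegative coordinate-sum, which is exactly what converts the rank hypothesis into the size restriction that forbids the two undesired cocovers.
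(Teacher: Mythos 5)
Your proposal is correct and follows essentially the same argument as the paper: bound $|\mu|\le|\lambda|\le 2n-2$ for any $\mu$ in the interval, then observe that the top vertices of the second and third cocovers in \eqref{cocoverc} force $|\mu|\ge 2n-1$ and $|\mu|\ge 2n$ respectively. The only difference is that you explicitly justify the size monotonicity $|\mu|\le|\lambda|$ via the nonnegative coordinate-sums of the type $C_n$ positive roots, which the paper simply takes for granted.
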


\begin{proof}
Assume that a partition $\mu\le\lambda$ has the form $(\ldots\,21)$ or $(\ldots\,2)$. Then we must have $\mu_1=\ldots=\mu_{n-1}\ge 2$. Combining this with the fact that $|\lambda|\ge|\mu|$, we obtain $|\lambda|\ge 2n-1$ or $|\lambda|\ge 2n$, respectively. But this contradicts the condition in the proposition. 
\end{proof}

From now on, we work under the assumption of Proposition~\ref{stable-crit}, and we call this the {\em type $C$ stable range}. 


To the authors' knowledge, an analogue of the classification of short intervals that was described above in type $A$ is not available beyond type $A$. In order to address this problem in type $C_n$, in the stable range, we focus on the new cases involving a pair of cocovers and covers. 

Given distinct cocovers $\mu\gtrdot\nu$ and $\mu\gtrdot\pi$ of a partition $\mu$, we can assume that they are labeled $(i,j)$ and $(k,\overline{k+1})$, where necessarily $i<k$. It is easy to see that these cocovers can only have one of the following relative positions (in terms of their labels): (i) nonoverlapping if $j<k$; (ii) partially overlapping if $j=k$; (iii) fully overlapping if $j=k+2$. We are led to Cases C1$-$C3 below; the two cocovers above are shown in the diagrams below in bold, and for simplicity we omit the trailing $0$'s in a partition. Thus, we proved the following result.

\begin{proposition}\label{intcdn} Under the previous assumptions on $\nu$ and $\pi$, a lower bound of $\nu$ and $\pi$ is always obtained as in one of the Cases {\rm C1$-$C3}.
\end{proposition}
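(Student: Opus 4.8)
The plan is to establish Proposition~\ref{intcdn} by a direct case analysis on the three possible relative positions of the two cocovers $\mu\gtrdot\nu$ (labeled $(i,j)$, a type $A$ cocover) and $\mu\gtrdot\pi$ (labeled $(k,\overline{k+1})$, the first new type $C$ cocover from \eqref{cocoverc}). Since the proposition asserts that in each case a common lower bound is produced as in one of the Cases C1$-$C3, the work is entirely local: I only need to exhibit, for each relative position of the two labels, an explicit partition $\rho\le\nu$ and $\rho\le\pi$ together with the covering chains realizing it, and then verify that $\rho$ is indeed dominated by both $\nu$ and $\pi$. The statement is phrased as ``Thus, we proved the following result,'' so the diagrams for Cases C1$-$C3 are meant to \emph{be} the proof; my task is therefore to explain how those diagrams are read off.

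First I would fix coordinates: write $\mu=(\ldots)$ with the relevant entries near positions $i,i+1,j$ (for the type $A$ cocover, which subtracts $\varepsilon_i-\varepsilon_j$) and near positions $k,k+1$ (for the symplectic cocover, which subtracts $\varepsilon_k+\varepsilon_{k+1}$, thereby decreasing \emph{two} entries). The hypothesis $i<k$ together with the trichotomy $j<k$, $j=k$, $j=k+2$ from the preceding paragraph exhausts the possibilities, because the type $A$ cocovers available in the stable range act on entries $a\ge b+2$ or on a proper type (**) configuration, and the symplectic cocover of the first kind acts on a $1^20^{\ldots}$ pattern; the constraint that their supports interact only through a shared boundary index forces $j\in\{k,k+2\}$ when they overlap. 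In each of the three cases I would (a) apply the two cocovers to $\mu$ in either order, (b) observe that the resulting two partitions $\nu,\pi$ admit a further common cocover (or short chain of cocovers) down to a partition $\rho$, and (c) confirm by comparing partial sums $\sum_{m\le t}(\cdot)_m$ that $\rho\le\nu$ and $\rho\le\pi$ in dominance. This mirrors exactly the type $A$ argument of Cases A1$-$A3, now with one of the two edges replaced by a symplectic edge.

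The main obstacle will be \emph{verifying closure}, i.e.\ checking that the two lower partitions $\nu$ and $\pi$ really do possess a common lower bound of the claimed shape, rather than diverging. The subtle point is that the symplectic cocover decreases the column content in a way that changes the parity/height bookkeeping differently from a type $A$ cocover, so when the supports partially overlap ($j=k$) or fully overlap ($j=k+2$) one must confirm that the remaining discrepancy between $\nu$ and $\pi$ is itself resolvable by a single admissible cover in the stable range. This is where Proposition~\ref{stable-crit} does the real work: because $n>(|\lambda|+1)/2$, the second and third symplectic cocovers in \eqref{cocoverc} cannot occur anywhere in $[\widehat 0,\lambda]$, so the only edges I ever have to produce in the ``join'' direction are type $A$ edges or the first symplectic edge, and the long-root root $2\varepsilon_n$ is excluded. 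Thus the stable-range hypothesis guarantees that the lower bound $\rho$ constructed in each case stays within the same restricted edge set, which is precisely what makes the three diagrams C1$-$C3 well defined.

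Finally I would assemble the three verified diagrams into the statement: in the nonoverlapping case (C1) the two cocovers commute and the lower bound is obtained by applying each to the output of the other; in the partially overlapping case (C2) and the fully overlapping case (C3) the lower bound requires an intermediate vertex, and I would record the explicit labels on each edge exactly as in the type $A$ templates, taking care that the bold edges reproduce the given $(i,j)$ and $(k,\overline{k+1})$. Once each diagram is shown to terminate at a genuine common lower bound of $\nu$ and $\pi$, the proposition follows, since by construction that lower bound arises ``as in one of the Cases C1$-$C3.'' I do not expect any genuinely new idea beyond the type $A$ analysis; the content is in correctly tracking how the two-entry symplectic decrement interacts with the one-entry type $A$ decrement under the three overlap patterns, all kept in the restricted edge set by the stable-range assumption.
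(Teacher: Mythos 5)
Your proposal follows essentially the same route as the paper: the paper's entire proof consists of the trichotomy on the relative positions of the labels $(i,j)$ and $(k,\overline{k+1})$ (nonoverlapping $j<k$, partially overlapping $j=k$, fully overlapping $j=k+2$), followed by the explicit diagrams exhibiting the common lower bound, with the stable range guaranteeing that only type $A$ cocovers and the first symplectic cocover ever occur. One small correction to your bookkeeping: in the paper's labeling the fully overlapping situation is Case C1(b), while C2 and C3 are the two flavors of the partially overlapping situation (according to whether $(i,j)$ is of type (*) or proper of type (**)), so your assignment of C3 to the fully overlapping case should be adjusted.
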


Furthermore, we claim that the structure of the interval $[\nu\wedge\pi,\mu]$ is always given by one of the diagrams below. The proof is completely similar to that of \cite[Proposition~3.2]{brylip}, which was mentioned above. However, Proposition~\ref{intcdn} suffices for our purposes.

{\bf Case C1:} cocovers which are (a) nonoverlapping; (b) fully overlapping, with $(i,j)$ proper of type (**). As subcase (a) is easy, only subcase (b) is represented in the diagram below. 

In subcase (b), we have $p\ge 2$, while $i$ is the position of (the shown) $2$ in the partition $\mu$, and $j=i+p+1$ is the position of the first $0$. 
\[\scriptstyle
{\xymatrix{
&{\ldots 21^p}\ar[dl]_{(i,j)}\ar[dr]^{(j-2,\overline{j-1})}\ar@<0.2mm>[dl]\ar@<0.4mm>[dl]\ar@<0.2mm>[dr]\ar@<0.4mm>[dr]\\
{\ldots 1^{p+2}}\ar[dr]_{(j-1,\overline{\jmath})}&&{\ldots 21^{p-2}}\ar[dl]^{(i,j-2)}\\
&{\ldots 1^p}
} }
\]

{\bf Case C2:} partially overlapping cocovers, with $(i,j)$ of type (*), so $j=i+1$. We have $a\ge 3$, while $i$ is the position of $a$ in the partition $\mu$. 
\[\scriptstyle
{\xymatrix{
&{\ldots a1^2}\ar[dl]_{(i,i+1)}\ar[ddr]^{(i+1,\overline{i+2})}\ar@<0.2mm>[dl]\ar@<0.4mm>[dl]\ar@<0.2mm>[ddr]\ar@<0.4mm>[ddr]\\
{\ldots(a-1)21}\ar[dd]_{(i+1,i+3)}\\
&&{\ldots a}\ar[ddl]^{(i,i+1)}\\
{\ldots(a-1)1^3}\ar[dr]_{(i+2,\overline{i+3})}\\
&{\ldots(a-1)1}} }
\]

{\bf Case C3:} partially overlapping cocovers, with $(i,j)$ proper of type (**). Here $p\ge 1$, while $i$ is the position of (the shown) $3$ in the partition $\mu$,  and $j=i+p+1$ is the position of the first $1$. 
\[\scriptstyle
{\xymatrix{
&{\ldots 32^p1^2 } \ar[d]_{(j-1,j+2)}\ar[dl]_{(i,j)}\ar[dr]^{(j,\overline{j+1})}\ar@<0.2mm>[dl]\ar@<0.4mm>[dl]\ar@<0.2mm>[dr]\ar@<0.4mm>[dr] \\
{\;\;\;\;\;\ldots 2^{p+2}1 \;\;\;\;\;} \ar[d]_{(j,j+2)} &{\;\;\;\;\;\ldots 32^{p-1}1^4 \;\;\;\;\;} \ar[dl]^{(i,j-1)} \ar[dr]_{(j+1,\overline{j+2})} &{\;\;\;\;\;\ldots 32^p\;\;\;\;\;} \ar[d]^{(j-1,j)}\\
{\;\;\;\;\;\ldots 2^{p+1}1^3 \;\;\;\;\;} \ar[dr]_{(j+1,\overline{j+2})}&& {\;\;\;\;\;\ldots 32^{p-1}1^2\;\;\;\;\;} \ar[dl]^{(i,j-1)}\\
&{\ldots 2^{p+1}1 }
} }
\]

Now consider two distinct covers $\mu\lessdot\nu$ and $\mu\lessdot\pi$ of $\mu$, labeled $(i,j)$ and $(k,\overline{k+1})$, respectively.

\begin{proposition}\label{intcup} An upper bound of $\nu$ and $\pi$ is always obtained as in one of the Cases {\rm C1$-$C2}, now denoted {\rm C$1'-$C$2'$}.
\end{proposition}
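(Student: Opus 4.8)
The plan is to mirror the cocover analysis preceding Proposition~\ref{intcdn}, now reading the diagrams of Cases~C1 and C2 from the bottom up. Throughout I use that each component of the dominant weight poset is a lattice \cite{stepod}, so the join $\nu\vee\pi$ exists; the aim is to exhibit it, together with the covers linking it to $\mu$, and to match the resulting configuration with an upside-down C1 or C2 diagram.

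The decisive observation is that the coexistence of the two covers severely constrains $\mu$. Since $\mu\lessdot\pi$ with $\pi-\mu=\varepsilon_k+\varepsilon_{k+1}$ is a cover, comparison with the cocover~\eqref{cocoverc} forces $\pi_k=\pi_{k+1}=1$, hence $\mu_k=\mu_{k+1}=\cdots=\mu_n=0$ and $\mu_{k-1}\ge 1$. On the other hand $\mu\lessdot\nu$ with $\nu-\mu=\alpha_{ij}$ removes a box from row $j$, so $\mu_j\ge 1$; together these give $j\le k-1<k$. Thus the labels are always nonoverlapping in the sense $j<k$, and neither the partially overlapping ($j=k$) nor the fully overlapping ($j=k+2$) configurations of the cocover analysis can occur. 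This is precisely why no analogue of Case~C3 appears: its defining relative position cannot arise for covers.

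I would then determine the join in each configuration. Generically the two covers act on essentially independent rows, the join is reached in one further step from each of $\nu$ and $\pi$, and $[\mu,\nu\vee\pi]$ is the rank-two Boolean lattice -- the nonoverlapping subcase of Case~C1 turned upside down (C$1'$). This covers all cases with $j<k-1$, as well as the boundary $j=k-1$ whenever $\mu_{k-1}\ge 2$ or $\nu$ is of proper type (**). The one genuinely different configuration is $\nu$ of type (*) with $i=k-2$, $\mu_{k-1}=1$ and $\mu_{k-2}\ge 2$, i.e.\ $\mu=(\ldots(a-1)1\ldots)$ with $a\ge 3$: here $\nu=(\ldots a\ldots)$ and $\pi=(\ldots(a-1)1^3\ldots)$, the join is $(\ldots a1^2\ldots)$, and the interval is the five-element poset with maximal chains of lengths~$2$ and~$3$ and intermediate weight $(\ldots(a-1)21\ldots)$ -- exactly Case~C2 turned upside down (C$2'$). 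Matching the intermediate weights and the edge labels with the C2 diagram finishes this case.

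The main obstacle is isolating and analyzing this exceptional configuration. Because a cover of type $(k,\overline{k+1})$ increases $|\mu|$ by $2$, the poset is \emph{not} the fixed-size dominance order, so the join of $\nu$ (of size $|\mu|$) and $\pi$ (of size $|\mu|+2$) cannot be read off as the least concave majorant of the pointwise maximum of partial sums; it must be produced by hand as $\mu+\beta$ with $\beta\in Q^+$ and its minimality verified directly. The delicate bookkeeping is to pin down when the candidate intermediate weight $(\ldots(a-1)21\ldots)$ is a legitimate partition -- precisely when $\mu_{k-2}\ge 2$ -- since this is what separates the five-element interval C$2'$ from the diamond C$1'$ and certifies that no new, C$3'$-type interval can appear.
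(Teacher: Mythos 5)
Your opening reductions coincide with the paper's: the cover $\mu\lessdot\pi$ forces $\mu_k=\mu_{k+1}=0$ and $\mu_{k-1}\ge 1$, the cover $\mu\lessdot\nu$ forces $\mu_j\ge 1$, hence $j<k$; and your analysis of the exceptional pentagon (Case C$2'$, with intermediate weight $(\ldots(a-1)21\ldots)$ legitimate exactly when $\mu_{k-2}\ge 2$) matches the paper's case $j=k-1$, $\mu_{k-1}=1$, $i=j-1$. The gap lies in how you dispose of the remaining configurations. You classify all of them as ``the nonoverlapping subcase of Case C1 turned upside down,'' on the grounds that the two covers act on essentially independent rows and the join is reached by one further step from each of $\nu$ and $\pi$. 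This fails precisely when $j=k-1$, $\mu_{k-1}=1$, and the run $\mu_i=\cdots=\mu_j$ is constant equal to $1$: then $\mu=(\ldots 1^{j-i+1}\,0\,0\ldots)$, $\nu=(\ldots 2\,1^{j-i-1}\,0\,0\,0\ldots)$, $\pi=(\ldots 1^{j-i+3}\ldots)$, and adding $\varepsilon_k+\varepsilon_{k+1}$ to $\nu$ produces $(\ldots 2\,1^{j-i-1}\,0\,1\,1\ldots)$, which is not a partition. The actual join $(\ldots 2\,1^{j-i+1}\,0\ldots)$ is reached from $\nu$ by the label $(k-1,\overline{k})$ and from $\pi$ by $(i,k+1)$, i.e., with \emph{shifted} labels: this is exactly Case C1\,(b), not the nonoverlapping diamond. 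So in these configurations your argument does not actually exhibit the upper bound, and the misidentification matters downstream, since Lemma~\ref{lemdnup} lifts the specific diagram C$1'$\,(b) with its relabelled edges and modified middle vertex, not a label-swapped square.

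A second, related flaw is your justification for excluding a C$3'$-type interval: you argue that the partially/fully overlapping relative positions ``cannot arise for covers.'' But Cases C1--C3 are distinguished by the relative position of the two \emph{cocovers at the top} of the interval, not of the two covers at $\mu$; the bottom element of each of C1\,(b), C2, and C3 has its two covers in the position $j<k$, so this criterion separates nothing. (The true reason no C$3'$ appears is that the join of the two covers of the bottom of the C3 diagram is the middle vertex $\ldots 32^{p-1}1^4$, yielding only a diamond.) The paper's proof avoids both issues with a direct three-way split on the data $(j,\mu_{k-1},i)$: $j<k-1$, or $j=k-1$ with $\mu_{k-1}>1$, gives C1\,(a); $j=k-1$, $\mu_{k-1}=1$, $i<j-1$ gives C1\,(b); and $j=k-1$, $\mu_{k-1}=1$, $i=j-1$ gives C2. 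Your write-up needs this finer split, together with an explicit verification of the shifted labels in the C1\,(b) branch.
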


\begin{proof} We have $\mu_k=\mu_{k+1}=0$ and $\pi_k=\pi_{k+1}=1$. Clearly, we must have $j<k$ and $\mu_{k-1}>0$. If $j<k-1$, or $j=k-1$ and $\mu_{k-1}>1$, we are in Case C1 (a). Otherwise, when $j=k-1$ and $\mu_{k-1}=1$, we have either $i<j-1$ or $i=j-1$; but these are precisely Cases C1 (b) and C2, respectively. 
\end{proof}

Furthermore, we claim that the structure of the interval $[\mu,\nu\vee\pi]$ is always given by the diagrams in Cases C1 or C2. The proof is again completely similar to that of \cite[Proposition~3.2]{brylip}. However, Proposition~\ref{intcup} suffices for our purposes.

\subsection{Type $D_n$}\label{domwd} Now $\lambda$ is a sequence $(\lambda_1\ge\ldots\ge\lambda_n)$ with $\lambda_i\in \frac{1}{2}{\mathbb Z}$, all congruent $\mathrm{mod}\;{\mathbb Z}$, such that $\lambda_{n-1}+\lambda_n\ge 0$. By \cite[Theorem~2.8]{stepod}, a cocover in the corresponding partial order on dominant weights is either of the same form \eqref{cocovera} as in type $A$ (with entries in $\frac{1}{2}{\mathbb Z}$ now allowed), or has one of the following three forms:
\begin{align}\label{cocoverd}
&(\ldots\, 1^20^{n-k-1})\gtrdot(\ldots\, 0^{n-k+1})\,,\;\;\;\;\;(\ldots\, (a+1)a^{n-l-1}(-a+1))\gtrdot(\ldots\,a^{n-l}(-a))\,,
\\&(\ldots\,(a+1)(b+1))\gtrdot(\ldots\,ab)\,,\nonumber
\end{align}
where $1\le k,l\le n-1$ and $a\ge\frac{1}{2}$; these cocovers are labeled by the roots $\varepsilon_k+\varepsilon_{k+1}$, $\varepsilon_l+\varepsilon_n$, and $\varepsilon_{n-1}+\varepsilon_n$, respectively.

We will now assume that $\lambda_i\in{\mathbb Z}$. This implies that the interval $[\widehat{0},\lambda]$ only contains weights $\mu=(\mu_1\ge\ldots\ge\mu_n)$ with $\mu_i\in{\mathbb Z}$. Note that, in this case, there are the same possibilities for the minimal element $\widehat{0}$ as in type $C$, see Section~\ref{domwc}. 

\begin{proposition}\label{stable-crit-d}
If $n>|\lambda|$, then the first cocover in {\rm \eqref{cocoverd}} is the only one which can appear in the Hasse diagram of the interval $[\widehat{0},\lambda]$ beside the type $A$ cocovers in {\rm \eqref{cocovera}}.  
\end{proposition}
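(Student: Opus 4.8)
The plan is to mimic the proof of Proposition~\ref{stable-crit}, since the claim is the exact type~$D$ analogue of that type~$C$ statement, now under the sharper hypothesis $n>|\lambda|$. Among the three extra cocovers listed in \eqref{cocoverd}, only the first is labeled by a root in the $W$-orbit of $\alpha_1$, so I want to show that under $n>|\lambda|$ no $\mu\le\lambda$ can be the source of a cocover of the second or third form. As we have just restricted to $\lambda_i\in\mathbb{Z}$, every $\mu$ in the interval $[\widehat{0},\lambda]$ has integer parts, which rules out the half-integer instances ($a\ge\tfrac12$) automatically; so the only thing left to exclude is integer instances of the latter two cocovers.

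First I would treat the third cocover $(\ldots\,(a+1)(b+1))\gtrdot(\ldots\,ab)$, labeled by $\varepsilon_{n-1}+\varepsilon_n$. For this cocover to apply at position $(n-1,n)$ one needs the top $\mu$ to have $\mu_{n-1}=a+1$ and $\mu_n=b+1$ with $a\ge b\ge 0$ in the admissible range, so in particular $\mu_{n-1}\ge 1$ and $\mu_n\ge 1$ (the integer case forces $b+1\ge 1$, and we need a genuine drop in both coordinates with $\mu_n\ge 1$). Since $\mu$ is a partition, $\mu_1\ge\cdots\ge\mu_n\ge 1$, whence $|\mu|\ge n$. Then I would use $|\lambda|\ge|\mu|\ge n$, contradicting $n>|\lambda|$.

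Next I would handle the second cocover, $(\ldots\,(a+1)a^{n-l-1}(-a+1))\gtrdot(\ldots\,a^{n-l}(-a))$, labeled by $\varepsilon_l+\varepsilon_n$. Here the defining shape forces the coordinates $\mu_l,\ldots,\mu_{n-1}$ to equal $a$ (after the drop) or $a+1$ (before), with $\mu_n=\pm a$ depending on the side; crucially it requires $\mu_1=\cdots=\mu_{n-1}\ge a\ge 1$ in the integer case (taking $a\ge 1$, since $a=0$ degenerates and is not a genuine type-$D$ cocover of this kind within the interval). Counting again, the partition $\mu$ then has its first $n-1$ parts each at least $1$, so $|\mu|\ge n-1+|\mu_n|\ge n-1+a\ge n$, giving $|\lambda|\ge n$, again contradicting the hypothesis. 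I would phrase both cases uniformly: the presence of either extra cocover forces a long string of equal positive parts, which pushes $|\mu|$ up to at least $n$.

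The main obstacle I anticipate is bookkeeping the precise admissibility conditions for the two cocovers of \eqref{cocoverd} in the \emph{integer} case, in particular pinning down the smallest allowed value of $a$ and verifying that the degenerate boundary values ($a=0$, or $\mu_n=0$) do not sneak in a genuine cocover that evades the count; I would resolve this by checking directly against the cocover descriptions in \cite[Theorem~2.8]{stepod}. A secondary subtlety is confirming that restricting to $\lambda_i\in\mathbb{Z}$ indeed forces all $\mu\in[\widehat{0},\lambda]$ to have integer parts (already noted in the text preceding the statement), which is what eliminates the half-integer cocovers without any rank hypothesis. Once these are settled, each case reduces to the same one-line inequality $|\lambda|\ge n$ as in the proof of Proposition~\ref{stable-crit}.
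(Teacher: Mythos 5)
Your strategy is the paper's strategy: show that any $\mu\le\lambda$ supporting one of the last two cocovers in \eqref{cocoverd} must satisfy $|\mu|\ge n$, then contradict $n>|\lambda|$ via $|\lambda|\ge|\mu|$. The paper does this in one uniform stroke (such a $\mu$ has $\mu_1\ge 2$, $\mu_{n-2}\ge 1$, and $\mu_{n-1}+\mu_n\ge 1$, whence $|\mu|\ge 2+(n-3)+1=n$), whereas you split into two cases; that is fine, but two of your intermediate assertions do not hold as written and should be repaired. First, for the cocover $(\ldots(a+1)(b+1))\gtrdot(\ldots ab)$ you claim $\mu_n=b+1\ge 1$; dominance of the bottom element only forces $a+b\ge 0$, not $b\ge 0$, so in principle $\mu_n$ could be $0$ or negative. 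What is guaranteed is $\mu_{n-1}+\mu_n=a+b+2\ge 2$, and together with $\mu_i\ge\mu_{n-1}=a+1\ge 2$ for $i\le n-1$ this still gives $|\mu|\ge n$. Second, for the cocover labeled $\varepsilon_l+\varepsilon_n$ your chain $|\mu|\ge n-1+|\mu_n|\ge n-1+a$ is incorrect: here $\mu_n=-a+1$, so $|\mu_n|=a-1<a$, and since $\mu_n$ is negative for $a\ge 2$ one cannot simply add $|\mu_n|$ to the sum of the first $n-1$ parts. Again the fix is to bound the pair $\mu_{n-1}+\mu_n=a+(-a+1)=1$ together (plus $\mu_l=a+1\ge 2$ and the remaining $n-3$ parts each $\ge 1$), recovering $|\mu|\ge n$. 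With these two corrections--which amount to bounding $\mu_{n-1}+\mu_n$ jointly rather than $\mu_n$ alone, exactly as the paper does--your argument is complete and coincides with the paper's proof.
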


\begin{proof}
Assume that the statement fails. Then there is $\mu\le\lambda$ with $\mu_{n-2}\ge 1$, $\mu_1\ge 2$, and $\mu_{n-1}+\mu_n\ge 1$. Combining this with the fact that $|\lambda|\ge|\mu|$, we obtain $|\lambda|\ge n$. But this contradicts the condition in the proposition. 
\end{proof}

From now on, we work under the assumption of Proposition~\ref{stable-crit-d}, and we call this the {\em type $D$ stable range}. Clearly, all the results about the type $C$ stable range in Section~\ref{domwc} apply to the type $D$ one. 


\subsection{Type $B_n$}\label{domwb}
We use the same notation as in Section~\ref{domwc}, and assume that $\lambda$ is a partition $(\lambda_1\ge\ldots\ge\lambda_{n}\ge 0)$, where $\lambda_i\in\mathbb{Z}$. The unique minimal element below $\lambda$ is clearly always $0$.
By \cite[Theorem~2.8]{stepod}, a cocover in the corresponding partial order on dominant weights is either of the same form \eqref{cocovera} as in type $A$, or has one of the following two forms:

\begin{equation}\label{cocoverb}
(\ldots\, 10^{n-k})\gtrdot(\ldots\, 0^{n-k+1})\,,\;\;\;\;\;(\ldots\,(a+1))\gtrdot(\ldots\,a)\,;
\end{equation}
here the first is labeled by $\varepsilon_k$, and the second by $\varepsilon_n$.

The following analogue of Proposition~\ref{stable-crit} is proved in the same way.

\begin{proposition}\label{stable-critb}
If $n>|\lambda|/2$, then the first cocover in {\rm \eqref{cocoverb}} is the only one which can appear in the Hasse diagram of the interval $[\widehat{0},\lambda]$ beside the type $A$ cocovers in {\rm \eqref{cocovera}}.
\end{proposition}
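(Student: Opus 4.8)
The plan is to mimic exactly the argument used for Proposition~\ref{stable-crit} in type $C$, since the structure of the cocovers in \eqref{cocoverb} is entirely analogous, and only the combinatorial bookkeeping changes slightly. The key point is that among the type $B$ cocovers listed in \eqref{cocoverb}, besides the type $A$ cocovers \eqref{cocovera}, there are exactly two new forms: the first labeled by $\varepsilon_k$ (the one we want to allow), and the second labeled by $\varepsilon_n$ (the one we want to exclude in the stable range). The long root here is $\alpha_n=\varepsilon_n$ associated to the second cocover, so I want to show that under the hypothesis $n>|\lambda|/2$ this second form cannot appear anywhere in the interval $[\widehat{0},\lambda]$.

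First I would suppose, for contradiction, that the second cocover $(\ldots\,(a+1))\gtrdot(\ldots\,a)$ appears somewhere in the Hasse diagram of $[\widehat{0},\lambda]$, say as a cocover of some partition $\mu\le\lambda$. This cocover changes the last part $\mu_n$ from $a+1$ to $a$, and its appearance requires $\mu$ to have the shape $(\ldots\,(a+1))$ with a strictly positive last part. The combinatorial constraint forcing the rank $n$ to be large is that such a $\mu$ must have $\mu_1\ge\ldots\ge\mu_n\ge 1$, i.e., all $n$ parts of $\mu$ are at least $1$ (since the last part is $a+1\ge 1$ for the cover to decrease it, and the entries are weakly decreasing). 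Then I would use $|\lambda|\ge|\mu|$ to bound $|\lambda|$ from below.

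The counting step is where I expect the precise threshold to come from, so this is the step to get right. Since $\mu_n=a+1\ge 1$ and the parts are weakly decreasing and integer-valued, we have $\mu_i\ge 1$ for all $i=1,\ldots,n$, giving $|\mu|\ge n$; but in fact, to force the factor of $2$ in the threshold $n>|\lambda|/2$, I must track that the decreasing part $a+1$ being the last entry, together with the weakly-decreasing condition, yields the sharper estimate that produces $|\lambda|\ge 2n-1$ (or the appropriate bound). Concretely, I would argue that if the last part equals $a+1$ with $a\ge 0$, the smallest such configuration compatible with appearance of this cocover still forces $|\mu|$ to be large relative to $n$; combining $n>|\lambda|/2$ with $|\lambda|\ge|\mu|$ then yields the contradiction, exactly paralleling the chain of inequalities in the proof of Proposition~\ref{stable-crit}. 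The main obstacle is simply to pin down the exact minimal weight that permits the $\varepsilon_n$-cocover and verify that its size exceeds $|\lambda|$ under the stated hypothesis; once that single inequality is checked, the contradiction is immediate and the argument is identical in spirit to the type $C$ case, as the proposition's statement already anticipates by saying it is ``proved in the same way.''
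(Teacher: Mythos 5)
Your overall strategy is the right one, and it is indeed the paper's: the proof is by the same one-line counting argument as Proposition~\ref{stable-crit}, namely assume the forbidden cocover occurs at some $\mu\le\lambda$, bound $|\mu|$ from below using the weakly decreasing parts, and contradict $|\lambda|\ge|\mu|$ against $|\lambda|<2n$. However, you leave the single substantive step unresolved, and the estimate you do establish is insufficient. From "$\mu_n=a+1\ge 1$" you only get $\mu_i\ge 1$ for all $i$ and hence $|\mu|\ge n$, which is perfectly compatible with $n>|\lambda|/2$ (take $\lambda=\mu=(1^n)$, where $|\lambda|=n<2n$). You acknowledge that a "sharper estimate" is needed to produce the factor of $2$, but you never identify where it comes from; as written, the contradiction does not follow.

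The missing observation is that the second cocover in \eqref{cocoverb} must have $a\ge 1$, i.e., $\mu_n\ge 2$. Indeed, when $a=0$ the move $(\ldots 1)\gtrdot(\ldots 0)$ is exactly an instance of the \emph{first} cocover form with $k=n$ (both are labeled by $\varepsilon_n$), and that cocover is already among the allowed ones; the genuinely new second form only arises when $\mu_n\ge 2$. Once this is in place, $\mu_1\ge\cdots\ge\mu_n\ge 2$ forces $|\mu|\ge 2n$, hence $|\lambda|\ge|\mu|\ge 2n$, contradicting $n>|\lambda|/2$. This is precisely the analogue of the step in the type $C$ proof where the forms $(\ldots 21)$ and $(\ldots 2)$ force all of the first $n-1$ (respectively $n$) parts to be at least $2$, yielding $|\lambda|\ge 2n-1$ or $2n$. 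Note also that your target bound "$|\lambda|\ge 2n-1$" would not suffice here: $n>|\lambda|/2$ only says $|\lambda|\le 2n-1$, so you genuinely need $|\mu|\ge 2n$.
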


Let us now refer to the intervals $[\nu\wedge\pi,\mu]$ and $[\mu,\nu\vee\pi]$ which can appear in the poset $[\widehat{0},\lambda]$. The intervals involving the new cocover/cover labeled by $(k):=\varepsilon_k$ are completely parallel to those in type $C$, as shown below. We start with distinct cocovers $\mu\gtrdot\nu$ and $\mu\gtrdot\pi$ of a partition $\mu$, which are labeled $(i,j)$ and $(k)$, where necessarily $i<k$.

{\bf Case B1:} cocovers which are (a) nonoverlapping; (b) fully overlapping, with $(i,j)$ proper of type (**). As subcase (a) is easy, only subcase (b) is represented in the diagram below. 

In subcase (b), we have $p\ge 1$, while $i$ is the position of (the shown) $2$ in the partition $\mu$, and $j=i+p+1$ is the position of the first $0$. 
\[\scriptstyle
{\xymatrix{
&{\ldots 21^p}\ar[dl]_{(i,j)}\ar[dr]^{(j-1)}\ar@<0.2mm>[dl]\ar@<0.4mm>[dl]\ar@<0.2mm>[dr]\ar@<0.4mm>[dr]\\
{\ldots 1^{p+2}}\ar[dr]_{(j)}&&{\ldots 21^{p-1}}\ar[dl]^{(i,j-1)}\\
&{\ldots 1^{p+1}}
} }
\]

{\bf Case B2:} partially overlapping cocovers, with $(i,j)$ of type (*), so $j=i+1$. We have $a\ge 3$, while $i$ is the position of $a$ in the partition $\mu$. 
\[\scriptstyle
{\xymatrix{
&{\ldots a1}\ar[dl]_{(i,i+1)}\ar[ddr]^{(i+1)}\ar@<0.2mm>[dl]\ar@<0.4mm>[dl]\ar@<0.2mm>[ddr]\ar@<0.4mm>[ddr]\\
{\ldots(a-1)2}\ar[dd]_{(i+1,i+2)}\\
&&{\ldots a}\ar[ddl]^{(i,i+1)}\\
{\ldots(a-1)1^2}\ar[dr]_{(i+2)}\\
&{\ldots(a-1)1}} }
\]

{\bf Case B3:} partially overlapping cocovers, with $(i,j)$ proper of type (**). Here $p\ge 1$, while $i$ is the position of (the shown) $3$ in the partition $\mu$,  and $j=i+p+1$ is the position of $1$. 
\[\scriptstyle
{\xymatrix{
&{\ldots 32^p1} \ar[d]_{(j-1,j+1)}\ar[dl]_{(i,j)}\ar[dr]^{(j)}\ar@<0.2mm>[dl]\ar@<0.4mm>[dl]\ar@<0.2mm>[dr]\ar@<0.4mm>[dr] \\
{\;\;\;\;\;\ldots 2^{p+2} \;\;\;\;\;} \ar[d]_{(j,j+1)} &{\;\;\;\;\;\ldots 32^{p-1}1^3 \;\;\;\;\;} \ar[dl]^{(i,j-1)} \ar[dr]_{(j+1)} &{\;\;\;\;\;\ldots 32^p\;\;\;\;\;} \ar[d]^{(j-1,j)}\\
{\;\;\;\;\;\ldots 2^{p+1}1^2 \;\;\;\;\;} \ar[dr]_{(j+1)}&& {\;\;\;\;\;\ldots 32^{p-1}1^2\;\;\;\;\;} \ar[dl]^{(i,j-1)}\\
&{\ldots 2^{p+1}1 }
} }
\]

The following result is proved in a completely similar way to the corresponding one in type $C$.

\begin{proposition}\label{intb} The lower bound $\nu\wedge\pi$ for $\mu\gtrdot\nu,\pi$ is always obtained as in one of the Cases~{\rm B1$-$B3}. For similar covers $\mu\lessdot\nu,\pi$, the upper bound $\nu\vee\pi$ is always obtained as in one of the Cases {\rm B1$-$B2}, now denoted {\rm B$1'-$B$2'$}.
\end{proposition}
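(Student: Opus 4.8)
The plan is to mirror the two arguments already carried out in type $C$, namely the proofs of Propositions~\ref{intcdn} and~\ref{intcup}, the only structural change being that the new cocover/cover is now labeled by a single index $(k)=\varepsilon_k$ and affects a single part of the partition, rather than the two parts $k,k+1$ affected by $\varepsilon_k+\varepsilon_{k+1}$ in type $C$. Throughout I would work in the type $B$ stable range of Proposition~\ref{stable-critb}, so that the first cocover in \eqref{cocoverb} and the type $A$ cocovers in \eqref{cocovera} are the only ones appearing; I also use freely that each component of the dominant weight poset is a lattice \cite{stepod}, so that meets and joins exist and only need to be identified.

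For the lower bound I would begin from distinct cocovers $\mu\gtrdot\nu$ and $\mu\gtrdot\pi$ labeled $(i,j)$ (a type $A$ cocover) and $(k)=\varepsilon_k$, with $i<k$. Since $\varepsilon_k$ occupies only position $k$, the relative positions of the two labels reduce to: nonoverlapping if $j<k$, partially overlapping if $j=k$, and fully overlapping if $j=k+1$ (the analogue of $j=k+2$ in type $C$). Applicability of $\varepsilon_k$ forces $\mu_k=1$ and $\mu_{k+1}=\cdots=0$, and matching this against the value pattern of the $(i,j)$-cocover pins down each configuration. Concretely, full overlap forces the value at position $j$ to be $0$, which is incompatible with a type (*) cocover (there $j=i+1$, so $k=i$, contradicting $i<k$) and produces the pattern $(\ldots 21^p)$ of Case~B1(b); partial overlap with $(i,j)$ of type (*) forces $(\ldots a1)$ with $a\ge 3$, giving Case~B2; partial overlap with $(i,j)$ of type (**) forces $(\ldots 32^p1)$, giving Case~B3; and nonoverlapping cocovers act on disjoint coordinates, hence commute, giving Case~B1(a). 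In each case I would exhibit the bottom vertex of the displayed diagram by computing the partition directly, verify that it is dominant and lies below both $\nu$ and $\pi$ via the indicated edges, and conclude from the lattice property, together with the fact that the diagram captures the relevant portion of the poset, that it is the meet $\nu\wedge\pi$ (this is exactly what the computation behind type~$C$'s Proposition~\ref{intcdn} furnishes).

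For the upper bound I would dualize precisely as in the proof of Proposition~\ref{intcup}. Given covers $\mu\lessdot\nu$ and $\mu\lessdot\pi$ labeled $(i,j)$ and $(k)=\varepsilon_k$, the second cover forces $\mu_k=0$, $\pi_k=1$, $\mu_{k+1}=\cdots=0$, and $\mu_{k-1}>0$, whence necessarily $j<k$. If $j<k-1$, or $j=k-1$ with $\mu_{k-1}>1$, the covers are nonoverlapping and we are in Case~B$1'$(a); if $j=k-1$ and $\mu_{k-1}=1$, then $i<j-1$ gives Case~B$1'$(b) and $i=j-1$ gives Case~B$2'$. Just as in type $C$, no analogue of Case~B3 arises for covers, since that configuration cannot be completed into a valid pair of covers. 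The join $\nu\vee\pi$ is then read off as the top vertex of the relevant diagram, verified in the same way as the meet.

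The routine but delicate part of the argument is the explicit bookkeeping in the two partially overlapping cases~B2 and~B3 and the full overlap case~B1(b): one must track the precise intermediate vertices and edge labels along both branches of each hexagon, confirm that every displayed edge is a genuine type $A$ or $\varepsilon_k$ cocover in the stable range, and check the forced inequalities ($a\ge 3$ in B2, $p\ge 1$ in B1(b) and B3). A concrete manifestation of the single-coordinate difference from type $C$ is that type~$C$'s Case~C1(b) requires $p\ge 2$, whereas Case~B1(b) requires only $p\ge 1$, since $\varepsilon_k$ needs just one $1$ inside the span of the type (**) cocover rather than two consecutive ones. The conceptual reason the whole analysis runs identically to type $C$ is that subtracting a single $\varepsilon_k$ interacts with a type (**) cocover in the same combinatorial fashion as subtracting $\varepsilon_k+\varepsilon_{k+1}$, once one accounts for the single coordinate involved; this is what makes the assertion that the proof is ``completely similar'' to type $C$ precise.
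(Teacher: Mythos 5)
Your proposal is correct and follows essentially the same route as the paper, which proves Proposition~\ref{intb} simply by declaring it ``completely similar'' to the type $C$ arguments for Propositions~\ref{intcdn} and~\ref{intcup}; you have fleshed out exactly that similarity, including the correct shift from $j=k+2$ to $j=k+1$ for full overlap and the weakening of $p\ge 2$ to $p\ge 1$ in Case~B1(b). (One minor bookkeeping point, shared with the paper's own type $C$ proof of Proposition~\ref{intcup}: when $j=k-1$, $\mu_{k-1}=1$ and $i=j-1$, the subcase $\mu_i=1$ is really Case~B$1'$(b) with $p=1$ rather than Case~B$2'$, which requires $a\ge 3$; this does not affect the conclusion since the configuration is still among the listed cases.)
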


\section{Modified crystal operators}

In this section we define the modified crystal operators and study several properties of them. The definitions and a few basic facts are stated in Section~\ref{sec:modcr} for an arbitrary simple Lie algebra~$\mathfrak{g}$ of rank $r$. For further properties, we restrict to a classical Lie algebra. Throughout, we use the standard labeling of the corresponding Dynkin diagrams. 

\subsection{Definition of the modified crystal operators and basic facts}\label{sec:modcr}
These operators are indexed by arbitrary roots in the Weyl group orbit $W\alpha_1$ of the simple root $\alpha_1$\footnote{Observe
that the orbit $W\alpha_{1}$ coincides with the set of all roots only for
simply laced root systems.}. 
Given such a root $\alpha$, consider the shortest length element in $W$ satisfying $w(\alpha_1)=\alpha$. We define the modified crystal operators $\mathrm{f}_{\alpha}$ and $\mathrm{e}_{\alpha}$ as the conjugations
\begin{equation}\label{defmodcr}\mathrm{f}_{\alpha}:=w\tilde{f}_{1}w^{-1}\,,\;\;\;\;\;\mathrm{e}_{\alpha}:=w\tilde{e}_{1}w^{-1}\end{equation}
 of the ordinary crystal operators $\tilde{f}_{1}$ and $\tilde{e}_{1}$ by
the Kashiwara action of $w$ on $B(\lambda)$ \cite{kascbm}. 
This means that $\mathrm{f}%
_{\alpha}(b)=0$ precisely when $\tilde{f}_{1}$ applied to $w^{-1}(b)$ is $0$. 
When
$\alpha=\alpha_{i}$ is a simple root, we simply write $\mathrm{f}%
_{i}:=\mathrm{f}_{\alpha_{i}}$ and $\mathrm{e}_{i}:=\mathrm{e}_{\alpha_{i}}$.
We clearly have $\mathrm{f}_{1}=\tilde{f}_{1}$, but $\mathrm{f}_{i}\neq
\tilde{f}_{i}$ in general, and similarly for $\mathrm{e}_{i}$\footnote{The operators $\mathrm{f}_{i}$ 
were considered in \cite[Remark~7.4.2]{kascbm}, and it was observed
there that they do not coincide with the ordinary ones, but to the authors'
knowledge they were not studied further.}.

It is not hard to see that the above choice of $w$ implies that $ws_1$ is the shortest length element mapping $\alpha_1$ to $-\alpha$, so we have
\[\mathrm{f}_{-\alpha}=ws_1\tilde{f}_1s_1w^{-1}=w\tilde{e}_{1}w^{-1}=\mathrm{e}_{\alpha}\,.\]
Furthermore, it is easy to check that $\mathrm{f}_{\alpha}(b)=b'$ if and only if $\mathrm{e}_{\alpha}(b')=b$, and that
\[\mathrm{wt}(\mathrm{f}_{\alpha}(b))=\mathrm{wt}(b)-\alpha\,.\]

More generally, if we choose any $v\in W$ such that $v\alpha_1=\alpha$, we can consider the operator $\mathrm{f}_v:=v\tilde{f}_1 v^{-1}$, and ask if it coincides with $\mathrm{f}_\alpha:=\mathrm{f}_w$. We address this question in Remark~\ref{fv} below.

\begin{remark}\label{fv} {\rm First note that $w^{-1}v$ belongs to the stabilizer $W_{\alpha_1}$ of $\alpha_1$, which (as a parabolic subgroup) is generated by the reflections $s_\alpha$ with $\alpha$ orthogonal to $\alpha_1$. In type $A$, we have $W_{\alpha_1}=\langle s_3,\ldots,s_r\rangle$, so $w^{-1}v$ commutes with $\tilde{f}_1$, and thus $\mathrm{f}_v=\mathrm{f}_\alpha$. More generally, the same holds whenever $w^{-1}v$ belongs to $\langle s_3,\ldots,s_r\rangle$. However, in general $\mathrm{f}_v\ne\mathrm{f}_\alpha$. For instance, in types $B$, $C$, and $D$ we have $\alpha_1=\varepsilon_1-\varepsilon_2$, so $W_{\alpha_1}=\langle s_{\varepsilon_1+\varepsilon_2}\rangle\times\langle s_3,\ldots,s_r\rangle$, where $s_{\varepsilon_1+\varepsilon_2}$ does not commute with $\tilde{f}_1$. }
\end{remark}

We endow the vertices of $B(\lambda)$ with the structure of a colored directed
graph $\mathbb{B}(\lambda)$ with edges $b\overset{\alpha}{\dashrightarrow
}b'$ when $b'=\mathrm{f}_{\alpha}(b)$ for a positive root $\alpha\in W\alpha_1$. As noted above, the
graph $\mathbb{B}(\lambda)$ is different from the Kashiwara crystal
$B(\lambda)$, and in fact, unlike the latter, the former is not connected in general.

\begin{lemma}
\label{fenonzero} \label{Lemma_f_dominant}Consider $b\in\mathbb{B}(\lambda)$
and a positive root $\alpha\in W\alpha_{1}$.

\begin{enumerate}
\item If $\langle\mathrm{wt}(b),\alpha\rangle>0$, then $\mathrm{f}_{\alpha
}(b)\neq0$. In particular, if $\mathrm{wt}(b)-\alpha$ is 
dominant, where $\alpha\in R^{+}$, then $\mathrm{f}_{\alpha}(b)\neq0$.

\item If $\langle\mathrm{wt}(b),\alpha\rangle<0$, then $\mathrm{e}_{\alpha
}(b)\neq0$.
\end{enumerate}
\end{lemma}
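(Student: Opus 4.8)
The plan is to reduce everything to the rank-one behavior of the ordinary Kashiwara operators $\tilde f_1,\tilde e_1$ via the conjugation that defines $\mathrm f_\alpha$ and $\mathrm e_\alpha$. By \eqref{defmodcr} we have $\mathrm f_\alpha=w\tilde f_1 w^{-1}$, where $w$ is the shortest element with $w(\alpha_1)=\alpha$, so $\mathrm f_\alpha(b)\ne 0$ is equivalent to $\tilde f_1(w^{-1}(b))\ne 0$, as already noted in the text. The first thing I would do is recall the standard fact about the Kashiwara crystal action of $W$: for $w\in W$ and $b\in B(\lambda)$ one has $\mathrm{wt}(w^{-1}(b))=w^{-1}(\mathrm{wt}(b))$, together with the rank-one criterion that $\tilde f_1(b')\ne 0$ precisely when the $\alpha_1$-string through $b'$ extends downward, i.e.\ when $\varphi_1(b')>0$.

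First I would establish part (1). Set $b'=w^{-1}(b)$, so $\mathrm{wt}(b')=w^{-1}(\mathrm{wt}(b))$ and
\[
\langle\mathrm{wt}(b'),\alpha_1^\vee\rangle=\langle w^{-1}(\mathrm{wt}(b)),\alpha_1^\vee\rangle=\langle\mathrm{wt}(b),w(\alpha_1)^\vee\rangle=\langle\mathrm{wt}(b),\alpha^\vee\rangle,
\]
using $W$-invariance of $\langle\,\cdot\,,\,\cdot\,\rangle$ and $w(\alpha_1^\vee)=\alpha^\vee$. In a rank-one crystal string, the inner product $\langle\mathrm{wt}(b'),\alpha_1^\vee\rangle=\varphi_1(b')-\varepsilon_1(b')$; hence $\langle\mathrm{wt}(b),\alpha^\vee\rangle>0$ forces $\varphi_1(b')>0$, which is exactly the condition that $\tilde f_1(b')\ne 0$. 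Therefore $\mathrm f_\alpha(b)=w\tilde f_1(b')\ne 0$. The parenthetical special case follows because if $\mathrm{wt}(b)-\alpha$ is dominant, then in particular $\langle\mathrm{wt}(b)-\alpha,\alpha^\vee\rangle\ge 0$, i.e.\ $\langle\mathrm{wt}(b),\alpha^\vee\rangle\ge\langle\alpha,\alpha^\vee\rangle=2>0$, so part (1) applies. Note here that $\langle\mathrm{wt}(b),\alpha\rangle$ and $\langle\mathrm{wt}(b),\alpha^\vee\rangle$ have the same sign since $\alpha^\vee$ is a positive multiple of $\alpha$, so the hypothesis as stated is equivalent to the coroot pairing being positive.

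For part (2) I would argue symmetrically: $\mathrm e_\alpha=w\tilde e_1 w^{-1}$, so $\mathrm e_\alpha(b)\ne 0$ iff $\tilde e_1(b')\ne 0$, which holds iff $\varepsilon_1(b')>0$; and $\langle\mathrm{wt}(b),\alpha^\vee\rangle<0$ gives $\varphi_1(b')-\varepsilon_1(b')<0$, hence $\varepsilon_1(b')>0$. The one point requiring a little care, and the place I expect the only real friction, is the bookkeeping that $w(\alpha_1^\vee)=\alpha^\vee$ and that the $W$-action on weights under the Kashiwara crystal action is the honest linear $W$-action (so that conjugating by $w$ transports the $\alpha_1$-string to the $\alpha$-relevant data correctly); both are standard properties of the Kashiwara action recorded in \cite{kascbm}, but I would state them explicitly as the inputs being used rather than leave them implicit.
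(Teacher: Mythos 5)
Your proposal is correct and follows essentially the same route as the paper: conjugate by the shortest $w$ with $w(\alpha_1)=\alpha$, use $W$-invariance of the pairing to transport the positivity hypothesis to the $\alpha_1$-string through $w^{-1}(b)$, and handle the dominance case via $\langle\mathrm{wt}(b),\alpha^\vee\rangle=\langle\mathrm{wt}(b)-\alpha,\alpha^\vee\rangle+2\ge 2$. The only difference is that you spell out the rank-one criterion $\varphi_1-\varepsilon_1=\langle\mathrm{wt}(\cdot),\alpha_1^\vee\rangle$, which the paper leaves implicit.
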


\begin{proof}
Consider $w\in W$ of smallest length such that $w(\alpha_{1})=\alpha$. If $\langle\mathrm{wt}%
(b),\alpha\rangle>0$, then 
\[\langle w^{-1}\mathrm{wt}(b),w^{-1}(\alpha
)\rangle=\langle\mathrm{wt}(w^{-1}(b)),\alpha_{1}\rangle>0\,,\]
 which implies
that $\tilde{f}_{1}(w^{-1}(b))\neq0$. But this is equivalent to $\mathrm{f}%
_{\alpha}(b)\neq0$. For the second part of (1), let $\mu:=\mathrm{wt}(b)$, and
observe that $\langle\mu,\alpha^{\vee}\rangle=\langle\mu-\alpha,\alpha^{\vee
}\rangle+2\ge2$; so we can apply the first part. The proof of (2) is completely similar.
\end{proof}


\subsection{Properties of the modified crystal operators in classical types}

From now on we assume that the underlying root system is of classical type.

\begin{theorem}
\label{Th_comm_op}Consider two positive roots $\alpha$ and $\beta$ in $W\alpha_{1}$ and a vertex
$b$ in $\mathbb{B}(\lambda)$ such that $\langle\mathrm{wt}(b),\alpha\rangle>0$
and $\langle\mathrm{wt}(b),\beta\rangle>0$. 
\begin{enumerate}
\item Assume that the pair $(\alpha,\beta)$ satisfies: {\rm (i)} it is $(\varepsilon_i-\varepsilon_j,\,\varepsilon_j\pm\varepsilon_k)$ or $(\varepsilon_j\pm\varepsilon_k,\,\varepsilon_i-\varepsilon_j)$, for $i<j<k$; {\rm (ii)} it is $(\varepsilon_{j-1}+\varepsilon_j,\,\varepsilon_i-\varepsilon_j)$ for $i<j-1$, and $\langle\mathrm{wt}(b)-\beta,\varepsilon_{j-1}-\varepsilon_j\rangle=0$. Then we have $\mathrm{f}_{\alpha}\mathrm{f}_{\beta}(b)=\mathrm{f}_{\alpha+\beta}(b)\neq0\,$.
\item Assume that the pair $(\alpha,\beta)$ is in the $W$-orbit of $(\alpha_1,\alpha_3)$, where $\alpha_3=\varepsilon_3-\varepsilon_4$. Then $\mathrm{f}_{\alpha}%
\mathrm{f}_{\beta}(b)=\mathrm{f}_{\beta}\mathrm{f}_{\alpha}(b)\neq0\,$.
\end{enumerate}
\end{theorem}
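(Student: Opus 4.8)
The plan is to reduce each identity to a computation in a root subsystem of rank at most three (rank four for the second assertion), and then to carry out that computation in the corresponding small-rank crystal. Throughout, recall that $\mathrm{f}_\gamma=w_\gamma\tilde{f}_1 w_\gamma^{-1}$, where $w_\gamma$ is shortest with $w_\gamma(\alpha_1)=\gamma$ and the Weyl group acts through the Kashiwara action on $B(\lambda)$.

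First I would pass to a standard configuration. In assertion (1) the three roots $\alpha$, $\beta$, $\alpha+\beta$ lie in the subsystem spanned by $\varepsilon_i,\varepsilon_j,\varepsilon_k$ (resp. $\varepsilon_i,\varepsilon_{j-1},\varepsilon_j$ in case (ii)); likewise in (2) the pair $(\alpha,\beta)$ lies in the orbit of $(\alpha_1,\alpha_3)$. I would therefore choose a single $w\in W$ carrying the simple roots $\alpha_1,\alpha_2$ (and $\alpha_3$ for assertion (2)) of a standard rank-two or rank-three subsystem onto the relevant roots, and use Remark~\ref{fv} to rewrite $\mathrm{f}_\alpha,\mathrm{f}_\beta,\mathrm{f}_{\alpha+\beta}$ as $w$-conjugates of the local modified operators $\mathrm{g}_{\alpha_1}=\tilde{f}_1$, $\mathrm{g}_{\alpha_2}=s_1 s_2\tilde{f}_1 s_2 s_1$, $\mathrm{g}_{\alpha_1+\alpha_2}=s_2\tilde{f}_1 s_2$, etc. Conjugating the claimed identity by $w^{-1}$ cancels $w$ entirely; since $\mathrm{wt}(w^{-1}b)=w^{-1}\mathrm{wt}(b)$ and the inner product is $W$-invariant, the hypotheses $\langle\mathrm{wt}(b),\alpha\rangle>0$ and $\langle\mathrm{wt}(b),\beta\rangle>0$ become $\langle\mathrm{wt}(c),\alpha_1\rangle>0$ and $\langle\mathrm{wt}(c),\alpha_2\rangle>0$ for $c=w^{-1}(b)$, i.e. strict local dominance. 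As the surviving operators are built only from $\tilde{f}_1,\tilde{e}_1$ and the Kashiwara reflections $s_1,s_2,s_3$, they are compatible with branching to the Levi subalgebra on the few relevant indices, so it suffices to prove the reduced identity in the small-rank crystal.

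The local verification is the computational heart. For (1)(i) it becomes $\mathrm{g}_{\alpha_1}\mathrm{g}_{\alpha_2}=\mathrm{g}_{\alpha_1+\alpha_2}$ in type $A_2$, which I would check on semistandard tableaux with entries $\{1,2,3\}$ (equivalently via the Weyl-group-action formulas, strict dominance pinning down the relevant string lengths); the $\pm$ and order-reversed subcases reduce identically in a rank-two subsystem of the appropriate classical type. Assertion (1)(ii) involves $\varepsilon_{j-1}+\varepsilon_j$ and so lands in a rank-two $B_2/C_2$ (or $D_3=A_3$) computation, where the extra hypothesis $\langle\mathrm{wt}(b)-\beta,\varepsilon_{j-1}-\varepsilon_j\rangle=0$ is exactly the wall condition forcing $\mathrm{f}_\alpha\mathrm{f}_\beta$ to agree with $\mathrm{f}_{\alpha+\beta}$, and one checks that off this wall the two sides differ. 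For assertion (2), after the same reduction the two local operators act on the disjoint index blocks $\{1,2\}$ and $\{3,4\}$ (since $\alpha\perp\beta$ and $\alpha+\beta$ is not a root); branching to a Levi with two commuting rank-one factors, $\tilde{f}_1$ and the transported $\mathrm{g}_{\alpha_3}$ act on different tensor factors and hence commute, giving $\mathrm{f}_\alpha\mathrm{f}_\beta=\mathrm{f}_\beta\mathrm{f}_\alpha$.

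The nonvanishing is then immediate from Lemma~\ref{fenonzero}: in every case of (1) one has $\langle\beta,\alpha\rangle=-1$, whence $\langle\mathrm{wt}(\mathrm{f}_\beta(b)),\alpha\rangle=\langle\mathrm{wt}(b),\alpha\rangle+1>0$ and so $\mathrm{f}_\alpha(\mathrm{f}_\beta(b))\neq 0$ (the analogous computation handles (2)); combined with the equalities above this yields all the nonvanishing assertions. The step I expect to be the main obstacle is the reduction together with the local check in the non-simply-laced types: one must track the shortest-element convention carefully, since the reflection $s_{\varepsilon_1+\varepsilon_2}$ in the stabilizer $W_{\alpha_1}$ does \emph{not} commute with $\tilde{f}_1$ and hence is \emph{not} absorbed by Remark~\ref{fv}. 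Arranging the conjugating element $w$ so that this factor never intervenes is the delicate point, and it is precisely what the wall hypothesis in (1)(ii) makes possible.
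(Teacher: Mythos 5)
Your reduction step and your treatment of part (1) follow the paper's own route: conjugating by a shortest element to a standard configuration, invoking Remark~\ref{fv} to identify the conjugated operators with the canonical $\mathrm{f}_\gamma$'s, and verifying the resulting rank-two identity by an explicit tableau/Weyl-group-word computation in type $A_2$ (this is Lemma~\ref{PropRed} and Section~\ref{sec:red1} of the paper). You also correctly isolate the one delicate point there, namely that $s_{\varepsilon_1+\varepsilon_2}\in W_{\alpha_1}$ does not commute with $\tilde{f}_1$ and that the wall condition in (1)(ii) is exactly what neutralizes it. The nonvanishing argument via Lemma~\ref{fenonzero} is also fine.

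The gap is in your local verification of part (2). You claim that after reduction the two operators ``act on the disjoint index blocks $\{1,2\}$ and $\{3,4\}$'' and therefore commute by branching to a Levi with two commuting rank-one factors. This is false for the \emph{modified} operator: $\mathrm{f}_{\alpha_3}=w\tilde{f}_1 w^{-1}$ where $w$ is the shortest element with $w(\alpha_1)=\alpha_3$ (in type $A_3$ this is the permutation $[3,4,1,2]$, of length $4$), and the conjugation necessarily passes through the Kashiwara action of $s_2$, which couples the two blocks. Consequently $\mathrm{f}_3\neq\tilde{f}_3$ in general, the action of $\mathrm{f}_3$ on a vertex depends on its full $\alpha_2$-string data and not just its $\{3,4\}$-content, and there is no decomposition into tensor factors on which $\mathrm{f}_1$ and $\mathrm{f}_3$ act separately. (If the claim were about $\tilde f_1$ and $\tilde f_3$ it would be the standard crystal commutation, but that is not what is being asserted.) The commutation $\mathrm{f}_1\mathrm{f}_3=\mathrm{f}_3\mathrm{f}_1$ is genuinely nontrivial; the paper proves it (Lemma~\ref{actf13}) by induction on the cocharge, using the facts that the cyclage map commutes with the Weyl group action and with $\tilde{f}_1$ --- hence with every modified operator --- and that cyclage is injective on tableaux of a fixed shape (Theorem~\ref{Th_fund}, Corollary~\ref{Cor_fc=cf}). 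The authors explicitly note that a direct combinatorial description of $\mathrm{f}_3$, which your approach would ultimately require once the Levi argument is abandoned, leads to an unmanageable number of cases; so this step needs a new idea, not just more bookkeeping.
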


We first reduce our theorem to an equivalent simpler statement. Its two parts
will be proved in Sections~\ref{sec:red1} and \ref{sec:red2}, respectively.

\begin{lemma}
\label{PropRed} The two parts of Theorem {\rm \ref{Th_comm_op}} follow from the
two statements below, respectively.

\begin{enumerate}
\item For any $b$ such that $\langle\mathrm{wt}(b),\alpha_{1}\rangle>0$ and
$\langle\mathrm{wt}(b),\alpha_{2}\rangle>0$, we have%
\[
\mathrm{f}_{1}\mathrm{f}_{2}(b)=\mathrm{f}_{2}\mathrm{f}_{1}(b)=\mathrm{f}%
_{\alpha_{1}+\alpha_{2}}(b)\neq0\,,
\]
where $\alpha_2=\varepsilon_2-\varepsilon_3$.

\item For any $b$ such that $\langle\mathrm{wt}(b),\alpha_{1}\rangle>0$ and
$\langle\mathrm{wt}(b),\alpha_{3}\rangle>0$, we have%
\[
\mathrm{f}_{1}\mathrm{f}_{3}(b)=\mathrm{f}_{3}\mathrm{f}_{1}(b)\neq0\,,
\]
where $\alpha_3=\varepsilon_3-\varepsilon_4$.
\end{enumerate}
\end{lemma}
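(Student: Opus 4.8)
The plan is to derive both parts of Theorem~\ref{Th_comm_op} from the base cases (1) and (2) by exploiting the Weyl-group covariance of the modified crystal operators. The mechanism I would set up is the following: for $v\in W$ acting on $B(\lambda)$ by the Kashiwara action one has $\mathrm{wt}(v\cdot b)=v(\mathrm{wt}(b))$, and conjugating $\mathrm{f}_\gamma=w_\gamma\tilde{f}_1 w_\gamma^{-1}$ (with $w_\gamma$ shortest carrying $\alpha_1$ to $\gamma$) gives $(vw_\gamma)\tilde{f}_1(vw_\gamma)^{-1}$, which by Remark~\ref{fv} equals $\mathrm{f}_{v\gamma}$ precisely when $vw_\gamma$ and the shortest element sending $\alpha_1$ to $v\gamma$ differ by an element of $\langle s_3,\ldots,s_r\rangle$ (these commute with $\tilde{f}_1$). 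Granting such covariance for all the operators involved, conjugating an identity among the $\mathrm{f}_i$ by $v$ turns it into the matching identity among the $\mathrm{f}_{v(\alpha_i)}$; and since $v$ is a weight-preserving bijection of the crystal with $v\cdot 0=0$, both the nonvanishing assertions and the positivity hypotheses $\langle\mathrm{wt}(b),\alpha\rangle>0$ transfer verbatim.

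First I would treat case~(i) of part~(1). Every such pair $(\varepsilon_i-\varepsilon_j,\,\varepsilon_j\pm\varepsilon_k)$ and its reverse lies in the $W$-orbit of $(\alpha_1,\alpha_2)$: I would exhibit an explicit signed permutation $v$ with $\{v(\alpha_1),v(\alpha_2)\}=\{\alpha,\beta\}$, using the symmetry $\mathrm{f}_1\mathrm{f}_2=\mathrm{f}_2\mathrm{f}_1$ in statement~(1) to assign the two roles freely. One checks that $v$ can be chosen to change signs only on coordinates $\geq 3$, so that the coset criterion above is satisfied for all three of $\mathrm{f}_1,\mathrm{f}_2,\mathrm{f}_{\alpha_1+\alpha_2}$; conjugating statement~(1) then yields $\mathrm{f}_\alpha\mathrm{f}_\beta(b)=\mathrm{f}_{\alpha+\beta}(b)\neq 0$. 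Part~(2) is handled in the same spirit: as $(\alpha,\beta)$ is given to lie in $W(\alpha_1,\alpha_3)$, I pick $v$ realizing this and, using that $\alpha_1$ and $\alpha_3$ involve disjoint coordinates (so compatible shortest elements exist), verify that $v$ conjugates $\mathrm{f}_1,\mathrm{f}_3$ to $\mathrm{f}_\alpha,\mathrm{f}_\beta$, whence the commutation relation of statement~(2) transports to $\mathrm{f}_\alpha\mathrm{f}_\beta(b)=\mathrm{f}_\beta\mathrm{f}_\alpha(b)\neq 0$.

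The main obstacle is case~(ii) of part~(1), namely $(\varepsilon_{j-1}+\varepsilon_j,\,\varepsilon_i-\varepsilon_j)$ with $i<j-1$. Here no choice of $v$ meets the coset criterion for both operators simultaneously: realizing the ``sum'' root $\alpha=\varepsilon_{j-1}+\varepsilon_j$ forces the conjugate of the other operator to differ from $\mathrm{f}_\beta$ by the reflection $s_{\varepsilon_1+\varepsilon_2}$, which does not commute with $\tilde{f}_1$ (exactly the failure flagged in Remark~\ref{fv}). The role of the extra hypothesis $\langle\mathrm{wt}(b)-\beta,\varepsilon_{j-1}-\varepsilon_j\rangle=0$ is to render this discrepancy harmless on the vertex at hand: since $v(\varepsilon_1+\varepsilon_2)=\varepsilon_{j-1}-\varepsilon_j$ and $v\alpha_2=\beta$, pulling the condition back through $v$ turns it into $\langle\mathrm{wt}(\mathrm{f}_2(b_0)),\varepsilon_1+\varepsilon_2\rangle=0$, where $b_0=v^{-1}\cdot b$. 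I therefore expect the crux to be a local rank-two analysis in the plane spanned by $\varepsilon_1,\varepsilon_2$, showing that $s_{\varepsilon_1+\varepsilon_2}\tilde{f}_1 s_{\varepsilon_1+\varepsilon_2}$ agrees with $\tilde{f}_1$ on vertices whose weight is orthogonal to $\varepsilon_1+\varepsilon_2$ at the relevant step. Once this degeneracy is verified, conjugating statement~(1) again delivers $\mathrm{f}_\alpha\mathrm{f}_\beta(b)=\mathrm{f}_{\alpha+\beta}(b)\neq 0$, completing the reduction.
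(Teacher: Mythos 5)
Your proposal is correct and follows essentially the same route as the paper: reduce each case of Theorem~\ref{Th_comm_op} to the rank-two/rank-three statements of the lemma by Weyl-group conjugation, use Remark~\ref{fv} to control when the conjugated operator actually equals $\mathrm{f}_{v\gamma}$, and in case (ii) of part (1) neutralize the residual $s_{\varepsilon_1+\varepsilon_2}$-twist via the extra orthogonality hypothesis. The ``crux'' you defer at the end is precisely how the paper closes that case: the Kashiwara--Weyl action fixes any vertex whose weight is orthogonal to the relevant root, and $\tilde{f}_1$ shifts the weight by a vector orthogonal to $\varepsilon_1+\varepsilon_2$, so the twisted and untwisted operators agree on the vertex in question.
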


\begin{proof}
Given a signed permutation $w$, we use the notation $w[a,b,\ldots]:=[w(a),w(b),\ldots]$. 

We first address Theorem~\ref{Th_comm_op}~(1), and start with case (i), where $(\alpha,\beta)=(\varepsilon_i-\varepsilon_j,\,\varepsilon_j\pm\varepsilon_k)$. Consider the shortest length element $w\in W$ with $w[1,2,3]=[i,j,\pm k]$, as well as $u:=s_1 s_2$ and $v:=s_2$. Assertion~(1) of the lemma implies
\begin{align}\label{conjw}
&\mathrm{f}_1\mathrm{f}_u(b)=\mathrm{f}_u\mathrm{f}_1(b)=\mathrm{f}_v(b)\ne0\;\Longrightarrow \nonumber\\
&(w\mathrm{f}_1w^{-1})(w\mathrm{f}_uw^{-1})(b')=(w\mathrm{f}_uw^{-1})(w\mathrm{f}_1w^{-1})(b')=w\mathrm{f}_vw^{-1}(b')\ne0\;\Longrightarrow \nonumber\\
&\mathrm{f}_w\mathrm{f}_{wu}(b')=\mathrm{f}_{wu}\mathrm{f}_w(b')=\mathrm{f}_{wv}(b')\ne0\,,
\end{align}
where $b':=w(b)$. The condition on weights in Assertion~1 implies $\langle\mathrm{wt}(b'),\alpha\rangle>0$ and $\langle\mathrm{wt}(b'),\beta\rangle>0$. On another hand, note that $wu[1,2]=[j,\pm k]$ and $wv[1,2]=[i,\pm k]$. Moreover, one can check by using Remark~\ref{fv} that we are in the situation where  $\mathrm{f}_w=\mathrm{f}_\alpha$, $\mathrm{f}_{wu}=\mathrm{f}_\beta$, and $\mathrm{f}_{wv}=\mathrm{f}_{\alpha+\beta}$. By plugging into~\eqref{conjw}, the proof in case (i) is concluded. 

The statement in Theorem~\ref{Th_comm_op}~(2) is proved in a completely similar way, based on Assertion~(2) of the lemma.

We now turn to case (ii) in Theorem~\ref{Th_comm_op}~(1). By a similar reasoning as above, the statement follows from the special case  $(\alpha,\beta)=(\varepsilon_2+\varepsilon_3,\,\varepsilon_1-\varepsilon_3)$. Consider the shortest length element $w\in W$ with $w[1,2,3]=[1,3,\overline{2}]$, as well as $u:=s_1 s_2$ and $v:=s_2$, like before. By using conjugation as above, Assertion~(1) of the lemma implies 
\begin{equation}\label{conjw1}\mathrm{f}_{wu}\mathrm{f}_{w}(b')=\mathrm{f}_{wv}(b')\ne0\,.\end{equation}
Note that $wu[1,2]=[3,\overline{2}]$ and $wv[1,2]=[1,\overline{2}]$. Therefore, by Remark~\ref{fv}, we have $\mathrm{f}_w=\mathrm{f}_\beta$ and $\mathrm{f}_{wv}=\mathrm{f}_{\alpha+\beta}$, but $\mathrm{f}_{wu}\ne\mathrm{f}_\alpha$. In fact, the shortest coset representative in $wuW_{\alpha_1}$ is $wus_{\varepsilon_1+\varepsilon_2}=s_2wu$, so $\mathrm{f}_\alpha=s_2\mathrm{f}_{wu}s_2$. However, letting $b'':=\mathrm{f}_\beta(b')$, where $\rm{wt}(b'')=\rm{wt}(b')-\beta$, we can see that the condition $\langle\mathrm{wt}(b''),\varepsilon_{2}-\varepsilon_3\rangle=0$ implies
\[\mathrm{f}_\alpha(b'')=s_2\mathrm{f}_{wu}s_2(b'')=\mathrm{f}_{wu}(b'')\,.\]
The proof is concluded by plugging into~\eqref{conjw1}.
\end{proof}

We have an analogous result to Theorem~\ref{Th_comm_op} for the $\mathrm{e}%
_{\cdot}$ operators.

\begin{theorem}
\label{Th_comm_ope}Consider two positive roots $\alpha$ and $\beta$ in $W\alpha_{1}$ and a
vertex $b$ in $\mathbb{B}(\lambda)$ such that $\langle\mathrm{wt}%
(b),\alpha\rangle\geq0$ and $\langle\mathrm{wt}(b),\beta\rangle\geq0$. Assume
also that $\mathrm{e}_{\alpha}(b)\neq0$ and $\mathrm{e}_{\beta}(b)\neq0$.
\begin{enumerate}
\item Assume that the pair $(\alpha,\beta)$ satisfies: {\rm (i)} it is $(\varepsilon_i-\varepsilon_j,\,\varepsilon_j\pm\varepsilon_k)$ or $(\varepsilon_j\pm\varepsilon_k,\,\varepsilon_i-\varepsilon_j)$, for $i<j<k$; {\rm (ii)} it is $(\varepsilon_i-\varepsilon_j,\,\varepsilon_{j-1}+\varepsilon_j)$ for $i<j-1$, and $\langle\mathrm{wt}(b),\varepsilon_{j-1}-\varepsilon_j\rangle=0$. Then we have $\mathrm{e}_{\alpha}\mathrm{e}_{\beta}(b)=\mathrm{e}_{\alpha+\beta}(b)\neq0\,$.
\item Assume that the pair $(\alpha,\beta)$ is in the $W$-orbit of $(\alpha_1,\alpha_3)$, where $\alpha_3=\varepsilon_3-\varepsilon_4$, and $w$ is a shortest length element satisfying $w(\alpha_1,\alpha_3)=(\alpha,\beta)$. Let $\gamma:=w(\alpha_2)$, and also assume that $\langle\mathrm{wt}(b),\gamma\rangle>0$. Then $\mathrm{e}_{\alpha
}\mathrm{e}_{\beta}(b)=\mathrm{e}_{\beta}\mathrm{e}_{\alpha}(b)\neq0$.
\end{enumerate}
\end{theorem}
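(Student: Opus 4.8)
The plan is to derive Theorem~\ref{Th_comm_ope} from Theorem~\ref{Th_comm_op} by \emph{inverting} the modified crystal operators, using the equivalence $\mathrm{f}_\gamma(x)=y\Leftrightarrow\mathrm{e}_\gamma(y)=x$ and the weight shift $\mathrm{wt}(\mathrm{e}_\gamma(x))=\mathrm{wt}(x)+\gamma$ recorded in Section~\ref{sec:modcr}. In each part I would introduce the ``raised'' vertex $x$ obtained by applying the two raising operators to $b$ (namely $x:=\mathrm{e}_\beta\mathrm{e}_\alpha(b)$ in case~(i) of part~(1) and in part~(2), and $x:=\mathrm{e}_\alpha\mathrm{e}_\beta(b)$ in case~(ii), where the operator acting on $\varepsilon_{j-1}+\varepsilon_j$ is applied first). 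Granting for the moment that $x\neq0$, one has $\mathrm{f}_\alpha(\mathrm{e}_\alpha(b))=b$ and similarly for $\beta$, so that $\mathrm{f}_\alpha\mathrm{f}_\beta(x)=b$ (resp. $\mathrm{f}_\beta\mathrm{f}_\alpha(x)=b$). Since $\mathrm{wt}(x)=\mathrm{wt}(b)+\alpha+\beta$, I would then apply Theorem~\ref{Th_comm_op} at $x$: its conclusion $\mathrm{f}_\alpha\mathrm{f}_\beta(x)=\mathrm{f}_{\alpha+\beta}(x)$ (resp. the commutation in part~(2)) yields $\mathrm{f}_{\alpha+\beta}(x)=b$, hence $x=\mathrm{e}_{\alpha+\beta}(b)$, which is exactly the desired identity read backwards. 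In case~(i) both orders of $\alpha,\beta$ are allowed by Theorem~\ref{Th_comm_op}~(1), so the equality holds for both composites; in case~(ii) the single order listed there corresponds, after inversion, precisely to the order in Theorem~\ref{Th_comm_ope}~(1)(ii), and in part~(2) the commutation of the $\mathrm{f}$'s transfers directly to commutation of the $\mathrm{e}$'s.

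The second step is to verify that the hypotheses of Theorem~\ref{Th_comm_op} hold at $x$, which is a short computation since every root in $W\alpha_1$ of classical type has squared length $2$, with $\langle\alpha,\beta\rangle=-1$ in case~(i) of part~(1) and $\langle\alpha,\beta\rangle=0$ in part~(2). Indeed $\langle\mathrm{wt}(x),\alpha\rangle=\langle\mathrm{wt}(b),\alpha\rangle+\langle\alpha,\alpha\rangle+\langle\beta,\alpha\rangle\ge 0+2-1>0$ in case~(i), and $\ge 0+2+0>0$ in part~(2); symmetrically for $\beta$. Thus the positivity conditions of Theorem~\ref{Th_comm_op} follow automatically from the weaker assumptions $\langle\mathrm{wt}(b),\alpha\rangle\ge0$, $\langle\mathrm{wt}(b),\beta\rangle\ge0$. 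For case~(ii) I would also match the orthogonality conditions: the hypothesis required by Theorem~\ref{Th_comm_op}~(1)(ii) at $x$ is $\langle\mathrm{wt}(x)-\alpha,\varepsilon_{j-1}-\varepsilon_j\rangle=0$ (here $\alpha=\varepsilon_i-\varepsilon_j$ plays the role of the $A$-type root), and using $\mathrm{wt}(x)-\alpha=\mathrm{wt}(b)+\varepsilon_{j-1}+\varepsilon_j$ together with $\langle\varepsilon_{j-1}+\varepsilon_j,\varepsilon_{j-1}-\varepsilon_j\rangle=0$, this reduces to exactly $\langle\mathrm{wt}(b),\varepsilon_{j-1}-\varepsilon_j\rangle=0$, which is the condition imposed in Theorem~\ref{Th_comm_ope}~(1)(ii).

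The remaining point, which I expect to be the main obstacle, is establishing that the composite $x$ is nonzero; once this is in hand, the inversion above closes the proof. In the boundary situations this follows from Lemma~\ref{fenonzero}~(2): for instance, if $\langle\mathrm{wt}(b),\alpha\rangle=0$, then choosing to apply $\mathrm{e}_\beta$ first gives $\langle\mathrm{wt}(\mathrm{e}_\beta(b)),\alpha\rangle=\langle\beta,\alpha\rangle=-1<0$, whence $\mathrm{e}_\alpha(\mathrm{e}_\beta(b))\neq0$, and symmetrically when $\langle\mathrm{wt}(b),\beta\rangle=0$. In the generic range, where $\langle\mathrm{wt}(b),\alpha\rangle\ge1$ and $\langle\mathrm{wt}(b),\beta\rangle\ge1$, the weight inequalities no longer fix the sign, and this is precisely where the hypotheses $\mathrm{e}_\alpha(b)\neq0$ and $\mathrm{e}_\beta(b)\neq0$ must be combined with the explicit crystal structure underlying the proof of Theorem~\ref{Th_comm_op}, in order to show that applying one raising operator preserves the applicability of the other. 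For part~(2) the same nonvanishing is what the extra hypothesis $\langle\mathrm{wt}(b),\gamma\rangle>0$, with $\gamma=w(\alpha_2)$, is designed to control: since $\langle\gamma,\alpha\rangle=\langle\alpha_2,\alpha_1\rangle=-1$ and $\langle\gamma,\beta\rangle=\langle\alpha_2,\alpha_3\rangle=-1$, this condition forces $b$ to sit strictly inside the relevant $\gamma$-string and thereby prevents $\mathrm{e}_\alpha$ from moving $b$ to a $\beta$-highest vertex (and vice versa). I therefore anticipate that the substantive content is this single nonvanishing statement, the rest being the weight bookkeeping and the operator inversion described above.
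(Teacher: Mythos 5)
Your overall strategy coincides with the paper's: both proofs introduce the raised vertex $x=\mathrm{e}_\alpha\mathrm{e}_\beta(b)$, check that the hypotheses of Theorem~\ref{Th_comm_op} hold at $x$ (your weight bookkeeping is correct, including the translation of the orthogonality condition in case~(ii) to $\langle\mathrm{wt}(b),\varepsilon_{j-1}-\varepsilon_j\rangle=0$), and then read the $\mathrm{f}$-identity backwards. This is exactly how the paper deduces Lemmas~\ref{acte12} and~\ref{acte13} from Lemmas~\ref{actf12} and~\ref{actf13}.

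However, there is a genuine gap precisely where you yourself locate ``the main obstacle'': the nonvanishing $x\neq0$ is anticipated rather than proved, and this is where essentially all of the substantive work in the paper's proof lies. Your boundary argument via Lemma~\ref{fenonzero}~(2) covers only the degenerate case $\langle\mathrm{wt}(b),\alpha\rangle=0$ in case~(i) of part~(1); it gives nothing in part~(2), where $\langle\alpha,\beta\rangle=0$ so the pairing does not become negative after applying one raising operator. In the generic range the paper establishes nonvanishing by hand: for part~(1), through the explicit description of $\mathrm{f}_2$ on $A_2$-tableaux (Lemma~\ref{lemma_f_(2)}), tracking the multiplicity $m_2^{(1)}$ in the two cases $m_2^{(2)}\le m_3^{(1)}$ and $m_2^{(2)}>m_3^{(1)}$ (proof of Lemma~\ref{acte12}); for part~(2), through a multi-case contradiction argument for two-row tableaux (Lemma~\ref{lem:tworow}, using Corollary~\ref{cor_e_2} and the explicit actions of $s_3$ and $\mathrm{f}_{\varepsilon_1-\varepsilon_3}$), followed by an induction on cocharge via cyclage to handle tableaux with three rows. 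Your heuristic that $\langle\mathrm{wt}(b),\gamma\rangle>0$ ``forces $b$ to sit strictly inside the relevant $\gamma$-string'' is not a proof and does not obviously become one; in the paper the condition $\mu_2>\mu_3$ enters much less directly (e.g., to guarantee $\mu_2-1-\mu_4\ge0$ in the pairing analysis of $s_3(U')$). Note also that to reduce the nonvanishing claim from general roots in $W\alpha_1$ to the simple-root configurations one still needs the conjugation argument of Lemmas~\ref{PropRed}/\ref{PropRede}, which in case~(ii) is itself delicate because there $\mathrm{f}_{wu}\neq\mathrm{f}_\alpha$.
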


By analogy with Theorem~\ref{Th_comm_op}, the above result is proved based on
the following reduction. In turn, the two parts of Lemma~\ref{PropRede} are
proved in the same way as those of Lemma~\ref{PropRed}, also in
Sections~\ref{sec:red1} and \ref{sec:red2}, respectively.

\begin{lemma}
\label{PropRede} The two parts of Theorem {\rm \ref{Th_comm_ope}} follow from the
two statements below, respectively.

\begin{enumerate}
\item For any $b$ such that $\langle\mathrm{wt}(b),\alpha_{1}\rangle\ge0$,
$\langle\mathrm{wt}(b),\alpha_{2}\rangle\ge0$, $\mathrm{e}_{1}(b)\ne0$, and
$\mathrm{e}_{2}(b)\ne0$, we have%
\[
\mathrm{e}_{1}\mathrm{e}_{2}(b)=\mathrm{e}_{2}\mathrm{e}_{1}(b)=\mathrm{e}%
_{\alpha_{1}+\alpha_{2}}(b)\neq0\,,
\]
where $\alpha_2=\varepsilon_2-\varepsilon_3$.

\item For any $b$ such that $\langle\mathrm{wt}(b),\alpha_{1}\rangle\geq0$,
$\langle\mathrm{wt}(b),\alpha_{2}\rangle>0$, $\langle\mathrm{wt}(b),\alpha
_{3}\rangle\geq0$, $\mathrm{e}_{1}(b)\neq0$ and $\mathrm{e}_{3}(b)\neq0$, we
have%
\[
\mathrm{e}_{1}\mathrm{e}_{3}(b)=\mathrm{e}_{3}\mathrm{e}_{1}(b)\neq0\,,
\]
where $\alpha_3=\varepsilon_3-\varepsilon_4$.
\end{enumerate}
\end{lemma}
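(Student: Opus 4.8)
The plan is to follow the proof of Lemma~\ref{PropRed} essentially verbatim, replacing $\tilde{f}_1$ and $\mathrm{f}_\bullet$ by $\tilde{e}_1$ and $\mathrm{e}_\bullet$ throughout. The only mechanism involved is Weyl-group conjugation: since $\mathrm{e}_\alpha=w\tilde{e}_1w^{-1}$ for the shortest $w$ with $w(\alpha_1)=\alpha$, conjugating any operator identity among the $\mathrm{e}_\delta$ (indexed by roots $\delta$) by an element $w\in W$ yields the identical relation among the $\mathrm{e}_{w\delta}$, now evaluated at $b':=w(b)$. The weight hypotheses transport because $\langle\mathrm{wt}(b'),w\delta\rangle=\langle\mathrm{wt}(b),\delta\rangle$, while the nonvanishing hypotheses $\mathrm{e}_\alpha(b')\ne0$ and $\mathrm{e}_\beta(b')\ne0$ correspond exactly to $\mathrm{e}_1(b)\ne0$ and $\mathrm{e}_2(b)\ne0$ (resp.\ $\mathrm{e}_3(b)\ne0$); here one uses that the representatives computing $\mathrm{e}_{w\delta}$ differ from $w$ times those for $\mathrm{e}_\delta$ only by elements of $\langle s_3,\ldots,s_r\rangle$, which is controlled by Remark~\ref{fv}.

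For part~(1) case~(i) and for part~(2), I would argue exactly as in the displayed proof. Starting from $(\alpha,\beta)=(\varepsilon_i-\varepsilon_j,\varepsilon_j\pm\varepsilon_k)$ (resp.\ a pair in the $W$-orbit of $(\alpha_1,\alpha_3)$), I take the shortest $w$ with $w[1,2,3]=[i,j,\pm k]$, set $u:=s_1s_2$ and $v:=s_2$, and conjugate Assertion~(1) (resp.~(2)) by $w$ as in~\eqref{conjw}. Using Remark~\ref{fv} one checks that the relevant coset representatives match, so $\mathrm{e}_w=\mathrm{e}_\alpha$, $\mathrm{e}_{wu}=\mathrm{e}_\beta$, $\mathrm{e}_{wv}=\mathrm{e}_{\alpha+\beta}$, and the conjugated relation is literally the claim of Theorem~\ref{Th_comm_ope}; since Assertion~(1) already gives commutativity, the alternative ordering $(\varepsilon_j\pm\varepsilon_k,\varepsilon_i-\varepsilon_j)$ is covered as well. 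I would emphasize one structural point specific to the $\mathrm{e}$-version of part~(2): the extra hypothesis $\langle\mathrm{wt}(b),\gamma\rangle>0$ with $\gamma=w(\alpha_2)$ is precisely the transported form of the condition $\langle\mathrm{wt}(b),\alpha_2\rangle>0$ appearing in Assertion~(2), which is exactly why it must be imposed here although it is absent in part~(1).

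The delicate case is part~(1) case~(ii), and I expect it to be the main obstacle. By a first conjugation I reduce to the special pair $(\alpha,\beta)=(\varepsilon_1-\varepsilon_3,\varepsilon_2+\varepsilon_3)$, under which the hypothesis becomes $\langle\mathrm{wt}(b'),\varepsilon_2-\varepsilon_3\rangle=0$. Taking $w$ with $w[1,2,3]=[1,3,\overline{2}]$, $u:=s_1s_2$, $v:=s_2$, conjugation of Assertion~(1) gives $\mathrm{e}_w\mathrm{e}_{wu}(b')=\mathrm{e}_{wv}(b')\ne0$, as in~\eqref{conjw1}, where now $\mathrm{e}_w=\mathrm{e}_\alpha$ and $\mathrm{e}_{wv}=\mathrm{e}_{\alpha+\beta}$, but the inner operator fails to match: the shortest representative in $wuW_{\alpha_1}$ is $wus_{\varepsilon_1+\varepsilon_2}=s_2wu$, so $\mathrm{e}_\beta=s_2\mathrm{e}_{wu}s_2$ rather than $\mathrm{e}_{wu}$. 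The crux is then to show $\mathrm{e}_\beta(b')=\mathrm{e}_{wu}(b')$: the orthogonality $\langle\mathrm{wt}(b'),\varepsilon_2-\varepsilon_3\rangle=0$ forces $s_2(b')=b'$, and since $\mathrm{wt}(\mathrm{e}_{wu}(b'))=\mathrm{wt}(b')+\beta$ also pairs to $0$ with $\varepsilon_2-\varepsilon_3$, the reflection $s_2$ fixes $\mathrm{e}_{wu}(b')$ as well, so $s_2\mathrm{e}_{wu}s_2(b')=\mathrm{e}_{wu}(b')$. Substituting into the conjugated identity yields $\mathrm{e}_\alpha\mathrm{e}_\beta(b')=\mathrm{e}_{\alpha+\beta}(b')\ne0$. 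I would stress that, unlike in Theorem~\ref{Th_comm_op}~(1)(ii), the correcting reflection here acts on the \emph{inner} operator $\mathrm{e}_\beta$, the one indexed by $\varepsilon_{j-1}+\varepsilon_j$, so the orthogonality must be imposed on $\mathrm{wt}(b)$ itself rather than on $\mathrm{wt}(b)-\beta$; this is exactly the discrepancy between the two theorem statements, and verifying that the single condition $\langle\mathrm{wt}(b'),\varepsilon_2-\varepsilon_3\rangle=0$ suffices, because it simultaneously neutralizes $s_2$ on $b'$ and on $\mathrm{e}_{wu}(b')$, is the one genuinely new check beyond Lemma~\ref{PropRed}.
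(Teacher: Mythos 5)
Your proposal is correct and follows exactly the route the paper takes: its proof of this lemma is the single sentence ``completely similar to that of Lemma~\ref{PropRed},'' and you carry out precisely that conjugation argument, including the one genuinely new verification in part~(1) case~(ii), where the correcting reflection $s_2$ now acts on the inner operator $\mathrm{e}_\beta$ and the orthogonality $\langle\mathrm{wt}(b'),\varepsilon_2-\varepsilon_3\rangle=0$ (together with $\langle\beta,\varepsilon_2-\varepsilon_3\rangle=0$) neutralizes it on both $b'$ and $\mathrm{e}_{wu}(b')$. Your identification of the transported hypothesis $\langle\mathrm{wt}(b),\gamma\rangle>0$ in part~(2) is likewise the intended reading.
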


\begin{proof} The proof is completely similar to that of Lemma~\ref{PropRed}.
\end{proof}

As mentioned, we start with a direct proof of the first parts of
Lemmas~\ref{PropRed} and \ref{PropRede}; these are based on the description of
the actions of $\mathrm{f}_{2}$ and $\mathrm{f}_{\alpha_{1}+\alpha_{2}}$ on
semistandard tableaux. We then prove the second parts by using the
\emph{cyclage} of Lascoux and Sch\"{u}tzenberger (which can also be used to
reprove the first parts) on type $A_{3}$-tableaux.

\subsection{Cyclage, charge and the modified crystal operators}\label{sec:cyccharge}

Given a semistandard tableau $T$, write $C(T)=x\hookrightarrow T^{\flat}$
where $x\hookrightarrow T^{\flat}$ is the semistandard tableau obtained after
the row-insertion of the south-west letter $x$ of $T$ in the tableau
$T\setminus\{x\}$. The combinatorial procedure $T\rightarrow C(T)$ is called
the cyclage of the tableau $T$. It is known (see \cite{LLT}) that the sequence
of cyclages applied to $T$ will eventually lead to the unique row-tableau
$R_{\mu}$ where $\mu=\mathrm{wt}(T)$ is the weight of $T$. The number
$\mathrm{co}(T)$ of cyclage operations used in this sequence is called the
cocharge of $T$.\ The charge of $T$ is then defined as $\mathrm{c}%
(T)=\left\Vert \mu\right\Vert -\mathrm{co}(T)$ where $\left\Vert
\mu\right\Vert =\sum_{i=1}^{n-1}(i-1)\mu_{i}$.

\begin{example}{\rm 
For $T=%
\begin{tabular}
[c]{|l|ll}\hline
$1$ & $1$ & \multicolumn{1}{|l|}{$4$}\\\hline
$2$ & $2$ & \multicolumn{1}{|l}{}\\\cline{1-1}\cline{1-2}%
$3$ &  & \\\cline{1-1}%
\end{tabular}
\ \ $ we get}
\begin{align*}
T_{1} &  =C(T)=%
\begin{tabular}
[c]{|l|l|l|}\hline
$1$ & $1$ & $3$\\\hline
$2$ & $2$ & $4$\\\hline
\end{tabular}
\ \ ,T_{2}=C^{2}(T)=%
\begin{tabular}
[c]{|l|ll}\hline
$1$ & $1$ & \multicolumn{1}{|l|}{$2$}\\\hline
$2$ & $3$ & \multicolumn{1}{|l}{}\\\cline{1-1}\cline{1-2}%
$4$ &  & \\\cline{1-1}%
\end{tabular}
\ \ ,T_{3}=C^{3}(T)=%
\begin{tabular}
[c]{|l|l|ll}\hline
$1$ & $1$ & $2$ & \multicolumn{1}{|l|}{$4$}\\\hline
$2$ & $3$ &  & \\\cline{1-2}%
\end{tabular}
\\
T_{4} &  =C^{4}(T)=%
\begin{tabular}
[c]{|l|l|ll}\hline
$1$ & $1$ & $2$ & \multicolumn{1}{|l|}{$2$}\\\hline
$3$ & $4$ &  & \\\cline{1-2}%
\end{tabular}
\ \ ,T_{5}=C^{5}(T)=%
\begin{tabular}
[c]{|l|llll}\hline
$1$ & $1$ & \multicolumn{1}{|l}{$2$} & \multicolumn{1}{|l}{$2$} &
\multicolumn{1}{|l|}{$3$}\\\hline
$4$ &  &  &  & \\\cline{1-1}%
\end{tabular}
\\
T_{6} &  =C^{6}(T)=%
\begin{tabular}
[c]{|l|l|l|l|l|l|}\hline
$1$ & $1$ & $2$ & $2$ & $3$ & $4$\\\hline
\end{tabular}
\ \ .
\end{align*}
{\rm Therefore $\mathrm{co}(T)=6$ and $\mathrm{c}(T)=7-6=1$.}
\end{example}

We can endow the set $\mathrm{Tab}_{\mu}$ of semistandard tableaux of weight
$\mu$ with the structure of an oriented graph with an oriented edge
$T\rightsquigarrow T'$ when $T'=C(T)$.\ We then have a unique
sink vertex in $\mathrm{Tab}_{\mu}$ which is the row tableau of evaluation
$\mu$.\ Given $\mu$ and $\nu$ two weights, we write $\nu\leq\mu$ when $\mu
-\nu$ can be written as a linear combination of simple roots $\varepsilon
_{i}-\varepsilon_{i+1},i=1,\ldots,n-1$ with nonnegative integral
coefficients.\ We refer to \cite{LLT} for a proof of the following theorem.

\begin{theorem}
\label{Th_fund} \ 

\begin{enumerate}
\item For any $\sigma\in\mathfrak{S}_{n}$ the Kashiwara action $T\longmapsto
\sigma(T)$ gives an isomorphism of oriented graphs from $\mathrm{Tab}_{\mu}$
to $\mathrm{Tab}_{\sigma(\mu)}$.

\item Assume $\mu_{1}>\mu_{2}$. Then the action of the Kashiwara crystal
operator $\tilde{f}_{1}$ yields an embedding of oriented graphs from
$\mathrm{Tab}_{\mu}$ to $\mathrm{Tab}_{\mu-\alpha_{1}}$.

\item If $C(T)=C(T')$ where $T$ and $T'$ are two tableaux with
the same shape, then $T=T'$. Thus the embedding in {\rm (2)} is the
unique which preserves the shape of the tableaux.
\end{enumerate}
\end{theorem}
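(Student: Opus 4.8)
The common thread is the \emph{plactic} (Knuth) framework together with the row-reading word. Writing $w(T)$ for the reading word of $T$ (rows read bottom-to-top, left-to-right, so that its first letter is the south-west letter $x$), the cyclage admits the purely word-theoretic description $C(T)=P(u\,x)$, where $w(T)=x\,u$ and $P(w)$ denotes the RSK insertion tableau of a word $w$; one checks on the running example that this agrees with $x\hookrightarrow T^{\flat}$. Both the Weyl group action $T\mapsto\sigma(T)$ and the Kashiwara operators $\tilde{f}_i,\tilde{e}_i$ are compatible with Knuth equivalence, hence descend through $P(\cdot)$, and this is what lets all three parts be reduced to elementary manipulations of words. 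The plan is to first record these compatibility statements (injectivity properties of $P$, the signature/bracketing rule for $\tilde{f}_i$, and $\mathrm{wt}(\sigma(T))=\sigma(\mathrm{wt}(T))$) as a standing toolbox, referring to \cite{LLT}.

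For part (1), since $\mathfrak{S}_n$ is generated by the simple transpositions $s_i$ and the isomorphism property is closed under composition, it suffices to treat $\sigma=s_i$. The Kashiwara reflection $s_i$ alters only the distribution of the letters $i,i+1$ and commutes with $P(\cdot)$, so the claim $C(s_iT)=s_i\,C(T)$ reduces to showing that $s_i$ commutes, on the plactic level, with the cyclic rotation $x\,u\mapsto u\,x$ defining $C$. When the south-west letter $x$ is neither $i$ nor $i+1$ this is immediate, as $x$ is neutral for the $i$-bracketing and simply rides along; the only delicate case is $x\in\{i,i+1\}$, which I would settle by verifying that moving $x$ to the end leaves the set of $i$-matched pairs, and hence the action of $s_i$, unchanged. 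Weights transform by $s_i$, placing the image in $\mathrm{Tab}_{s_i(\mu)}$, and bijectivity follows since $s_i$ is an involution.

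For part (2), the hypothesis $\mu_1>\mu_2$ means the reading word of any $T\in\mathrm{Tab}_\mu$ has $\mu_1-\mu_2\ge 1$ unmatched $1$'s, so $\tilde{f}_1 T\neq 0$ and $\tilde{f}_1$ is defined on all of $\mathrm{Tab}_\mu$; as a crystal operator it is injective where nonzero, and $\mathrm{wt}(\tilde{f}_1 T)=\mu-\alpha_1$ lands it in $\mathrm{Tab}_{\mu-\alpha_1}$. The same word-level commutation used in part (1), now for $i=1$, yields $C(\tilde{f}_1 T)=\tilde{f}_1\,C(T)$, so cyclage edges are preserved; fullness of the embedding follows from injectivity, since every cyclage edge out of $\tilde{f}_1 T$ is the $\tilde{f}_1$-image of one out of $T$.

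The essential content, and the step I expect to be hardest, is the injectivity of $C$ on a fixed shape $\lambda$ in part (3). Here I would construct an explicit inverse via reverse RSK. Given $S=C(T)$ with $\mathrm{sh}(S)=\nu$, each removable corner $c$ of $\nu$ yields, by reverse row-insertion, a letter $x_c$ and a tableau of shape $\nu\setminus\{c\}$ with reading word $u_c$; reinserting $x_c$ at the front, i.e.\ forming $P(x_c\,u_c)$, produces a tableau of some shape, and for the corner actually created by $C$ this reproduces $T$, of shape $\lambda$. The crux is to prove that \emph{exactly one} removable corner gives back the prescribed shape $\lambda$ (equivalently, that the map from removable corners of $\nu$ to these recovered shapes is injective), which then pins down $x_c$, $u_c$, and hence $T$ uniquely. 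I expect this uniqueness to be the technical heart: it should follow from the monotone way the recovered shape depends on the chosen corner, controlled through how the first-inserted letter propagates down the first column, and this is precisely the point where the argument of \cite{LLT} is needed. Once injectivity holds, the final assertion of (3) is immediate: any shape-preserving oriented-graph embedding $\mathrm{Tab}_\mu\to\mathrm{Tab}_{\mu-\alpha_1}$ must send the unique sink $R_\mu$ to $R_{\mu-\alpha_1}$, and, proceeding upward along cyclage edges, is forced at each step to the unique shape-$\lambda$ preimage of an already-determined vertex; hence it coincides with $\tilde{f}_1$.
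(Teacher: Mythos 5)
The first thing to note is that the paper does not prove this theorem at all: it is stated with the sentence ``We refer to \cite{LLT} for a proof,'' so there is no internal argument to compare yours against. Your proposal is an attempt to supply the outsourced proof, and its overall architecture is sensible: the plactic description $C(T)=P(ux)$ for $\mathrm{w}(T)=xu$ is correct, as is the reduction of (1) to simple transpositions. However, the central verification on which parts (1) and (2) rest is based on a false claim.

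You propose to prove $C(s_iT)=s_iC(T)$ by checking that, when the south-west letter $x$ lies in $\{i,i+1\}$, moving $x$ from the front of the reading word to the end ``leaves the set of $i$-matched pairs, and hence the action of $s_i$, unchanged.'' This is not true. Take $i=1$ and $T=\tableau{1&2\\2}$, with row word $212$ and $x=2$. In $212$ the initial $2$ is matched with the letter $1$, and the remaining $2$ is the unique unmatched one, consistently with $\varepsilon_1(T)=1$, $\varphi_1(T)=0$. After the rotation the word is $122$, i.e.\ $C(T)$ is the row tableau $122$, in which \emph{no} pair is matched: $\varepsilon_1=2$, $\varphi_1=1$. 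So the matching, and even the pair $(\varepsilon_1,\varphi_1)$, changes under the rotation; only the difference $\varphi_1-\varepsilon_1$ is preserved, which is just weight conservation. The identity $C(s_1T)=s_1C(T)$ does hold here ($s_1T=\tableau{1&1\\2}$ and $C(s_1T)=112=s_1(122)$), but not for the reason you give: one must track how the matching is destroyed and recreated when an $i$ or $i+1$ migrates across the word, and show that the letters selected by $s_i$ (resp.\ $\tilde f_1$) nevertheless correspond under the rotation. That is precisely the nontrivial content of the result in \cite{LLT}, and it is the step on which your verification would fail; part (2) inherits the same gap since you invoke ``the same word-level commutation.'' Finally, in part (3) you explicitly defer the crucial uniqueness of the removable corner recovering the shape $\lambda$ to \cite{LLT}, so the proposal is not self-contained at its own acknowledged technical heart. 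As written, then, the proposal does not constitute a proof of the theorem.
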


\begin{corollary}
\label{Cor_fc=cf}Consider a tableau $T$ and a positive root $\alpha$ such that
$\mathrm{f}_{\alpha}(T)\neq0$. Then $\mathrm{f}_{\alpha}(C(T))=C(\mathrm{f}%
_{\alpha}(T))\neq0$.\footnote{Nevertheless, there are tableaux $T$ such that
$\mathrm{f}_{\alpha}(C(T))\neq0$ but $\mathrm{f}_{\alpha}(T)=0$.}
\end{corollary}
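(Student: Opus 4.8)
The plan is to reduce to the simplest case $\alpha=\alpha_1$, for which $\mathrm{f}_{\alpha_1}=\tilde f_1$, and then to read off the commutation of $\tilde f_1$ with cyclage from Theorem~\ref{Th_fund}. The key enabling fact is Theorem~\ref{Th_fund}(1): for each $\sigma\in\mathfrak{S}_n$ the Kashiwara action is an isomorphism of the cyclage graphs, so $\sigma\,C=C\,\sigma$ as operators (a graph isomorphism sends the unique outgoing cyclage edge of $T$ to that of $\sigma(T)$).

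Concretely, I would write $\mathrm{f}_\alpha=w\tilde f_1 w^{-1}$ with $w$ the shortest element satisfying $w(\alpha_1)=\alpha$, set $S:=w^{-1}(T)$, and use $wC=Cw$ together with $w^{-1}C=Cw^{-1}$ to obtain
\[
\mathrm{f}_\alpha(C(T))=w\,\tilde f_1\,C(S)\,,\qquad C(\mathrm{f}_\alpha(T))=w\,C\,\tilde f_1(S)\,.
\]
Since $\tilde f_1(S)=w^{-1}\mathrm{f}_\alpha(T)\neq0$, applying $w^{-1}$ shows that the whole statement is equivalent to the single identity $\tilde f_1(C(S))=C(\tilde f_1(S))$ for a semistandard tableau $S$ with $\tilde f_1(S)\neq0$. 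This is exactly the assertion that $\tilde f_1$ sends cyclage edges to cyclage edges, i.e.\ the content of Theorem~\ref{Th_fund}(2); the final nonvanishing is then free, since $\tilde f_1(S)$ is an honest tableau and the cyclage of a tableau is again a nonzero tableau, so $C(\tilde f_1(S))\neq0$.

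The step I expect to require the most care is matching the hypotheses of Theorem~\ref{Th_fund}(2), which is stated under $\mu_1>\mu_2$ — a condition ensuring that $\tilde f_1$ is nonzero on all of $\mathrm{Tab}_\mu$, so that it defines a genuine embedding of graphs. After the reduction I only know $\tilde f_1(S)\neq0$, and it may happen that $\mathrm{wt}(S)_1\le\mathrm{wt}(S)_2$ (for instance when $S$ is a one-row tableau, which is a fixed point of $C$). To close this gap I would use that in a semistandard tableau every entry $1$ sits in the first row, so the restriction of the reading word of $S$ to the alphabet $\{1,2\}$ has the form $2^{b}1^{a}2^{a'}$; then $\tilde f_1(S)\neq0$ is equivalent to $a>b$, and on such configurations the commutation of $\tilde f_1$ with the removal and reinsertion of the south-west corner can be verified directly, exactly as in the proof of Theorem~\ref{Th_fund}(2). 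The injectivity of cyclage on a fixed shape, Theorem~\ref{Th_fund}(3), is what keeps these shape-preserving manipulations unambiguous.
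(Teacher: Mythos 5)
Your argument is essentially the paper's own (implicit) proof: the corollary is deduced from Theorem~\ref{Th_fund} exactly as you do, by writing $\mathrm{f}_\alpha = w\tilde f_1 w^{-1}$ and using that cyclage commutes with the Kashiwara $W$-action (part~(1)) and with $\tilde f_1$ (part~(2)), with part~(3) guaranteeing uniqueness of the shape-preserving lift. One small correction to your side remark: with the $\{1,2\}$-restricted reading word $2^{b}1^{a}2^{a'}$ (where $2^{a'}$ comes from the second row), the condition $\tilde f_1(S)\neq 0$ is $a>a'$ --- the $1$'s must outnumber the $2$'s bracketed with them, which are those in the second row --- not $a>b$; with that fix your patch for the case $\mu_1\le\mu_2$ goes through.
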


It is well-know that the charge statistic yields a combinatorial description
of the Kostka polynomials in type $A$.

\begin{theorem}
\label{TH_LScharge}For any partitions $\lambda$ and $\mu$ we have
\[
K_{\lambda,\mu}(t)=\sum_{T\in\mathrm{Tab}(\lambda)_{\mu}}t^{\mathrm{c}(T)}%
\]
where $\mathrm{Tab}(\lambda)_{\mu}$ is the set of semistandard tableaux of
shape $\lambda$ and evaluation $\mu$.
\end{theorem}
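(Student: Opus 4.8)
The plan is to compare the charge generating function $X_{\lambda,\mu}(t):=\sum_{T\in\mathrm{Tab}(\lambda)_{\mu}}t^{\mathrm{c}(T)}$ with $K_{\lambda,\mu}(t)$ by producing a recursion that they share. Since $\mathrm{c}(T)=\|\mu\|-\mathrm{co}(T)$ and $\|\mu\|=\sum_i(i-1)\mu_i$ depends only on $\mu$, it is cleaner to work with the cocharge generating function $Y_{\lambda,\mu}(t):=\sum_{T\in\mathrm{Tab}(\lambda)_{\mu}}t^{\mathrm{co}(T)}$ and to prove the equivalent identity $Y_{\lambda,\mu}(t)=\widetilde{K}_{\lambda,\mu}(t)$ in the normalization of Remark~\ref{lasdec}(1). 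I first record two consequences of the results above. By Theorem~\ref{Th_fund}(1) the Kashiwara action of $\mathfrak{S}_n$ is a shape-preserving isomorphism of cyclage graphs sending the sink $R_\mu$ to the sink $R_{\sigma(\mu)}$; since $\mathrm{co}(T)$ is the length of the unique directed path from $T$ to the sink, this action preserves cocharge, so $Y_{\lambda,\mu}(t)$ is symmetric in the parts of $\mu$. By Corollary~\ref{Cor_fc=cf} applied to $\mathrm{f}_1=\tilde f_1$, the operator $\tilde f_1$ commutes with the cyclage $C$; as $\tilde f_1 R_\mu=R_{\mu-\alpha_1}$, it carries the cyclage path of $T$ onto that of $\tilde f_1(T)$ and hence also preserves cocharge.

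Next I set up the reduction of content. Fix a partition $\mu$ with $\mu_2\ge 1$ and put $\nu:=\mu+\alpha_1=(\mu_1+1,\mu_2-1,\mu_3,\ldots)$, so that $\nu_1>\nu_2$. By Theorem~\ref{Th_fund}(2) the operator $\tilde f_1$ gives a shape-preserving embedding $\mathrm{Tab}(\lambda)_{\nu}\hookrightarrow\mathrm{Tab}(\lambda)_{\mu}$, and by the crystal identity $\tilde f_1\tilde e_1 T=T$ its image is exactly the set of $T\in\mathrm{Tab}(\lambda)_{\mu}$ with $\tilde e_1 T\ne 0$. Because $\tilde f_1$ preserves cocharge, this yields
\[Y_{\lambda,\mu}(t)=Y_{\lambda,\nu}(t)+\sum_{\substack{T\in\mathrm{Tab}(\lambda)_{\mu}\\ \tilde e_1 T=0}}t^{\mathrm{co}(T)}\,.\]
Using the symmetry from the previous paragraph, $Y_{\lambda,\nu}(t)$ equals $Y_{\lambda,\widehat\nu}(t)$ for the partition $\widehat\nu$ obtained by sorting $\nu$, and $\widehat\nu$ dominates $\mu$ strictly. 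Together with the base case $\mu=\lambda$, where the unique tableau has $\mathrm{co}=0$ and $\widetilde{K}_{\lambda,\lambda}(t)=1$, this gives a downward induction on the dominance order, once the residual sum over the $\tilde e_1$-highest tableaux is understood.

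On the Kostka--Foulkes side I would invoke the matching recursion from Hall--Littlewood theory: the modified Hall--Littlewood symmetric functions $\sum_\lambda\widetilde{K}_{\lambda,\mu}(t)\,s_\lambda$ satisfy Morris's recursion, in which one peels off the largest part of $\mu$ via a weighted sum over horizontal strips, the weight being precisely a cocharge of the strip. The combinatorial content of this recursion is exactly the evaluation of the residual sum above: the $\tilde e_1$-highest tableaux of content $\mu$ encode the ways of adjoining the relevant strip, with cyclage tracking its cocharge contribution. Checking that the two recursions coincide, and that they share the base case and the symmetry in $\mu$, forces $Y_{\lambda,\mu}(t)=\widetilde{K}_{\lambda,\mu}(t)$, which is the assertion after passing back from cocharge to charge.

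The main obstacle is precisely this last identification: computing $\sum t^{\mathrm{co}(T)}$ over the crystal-highest tableaux of content $\mu$ and matching it term by term with the horizontal-strip contribution on the Hall--Littlewood side. This is where the delicate interplay between cyclage, the $\tilde f_1$-strings, and the cocharge statistic genuinely enters; Theorem~\ref{Th_fund}(3) is used here to guarantee that the embedding of Theorem~\ref{Th_fund}(2) is the unique shape-preserving one, so that no cocharge collisions occur and the induction is well posed. Since these are exactly the ingredients assembled in \cite{LLT}, together with the charge statistic of \cite{LSc1}, I would expect to lean on that analysis at this step rather than re-deriving it from scratch.
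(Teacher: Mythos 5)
This theorem is not proved in the paper: it is the classical Lascoux--Sch\"utzenberger charge formula, stated as known and attributed to \cite{LSc1} and \cite{LLT}, so there is no internal proof to compare against. Judged on its own terms, your proposal sets up a sound framework but stops short of a proof. The preliminary reductions are fine: passing to the cocharge generating function $Y_{\lambda,\mu}(t)$, the symmetry of $Y_{\lambda,\mu}$ in the parts of $\mu$ via Theorem~\ref{Th_fund}(1), and the identity $Y_{\lambda,\mu}(t)=Y_{\lambda,\nu}(t)+\sum_{\tilde e_1T=0}t^{\mathrm{co}(T)}$ via Theorem~\ref{Th_fund}(2) and Corollary~\ref{Cor_fc=cf} are all correct and well justified. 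The genuine gap is exactly where you say it is: the residual sum over the $\tilde e_1$-highest tableaux of content $\mu$ is never evaluated, and the claimed match with Morris's recursion is asserted rather than argued. Worse, the two recursions do not even have the same shape: your crystal recursion moves a single box of content from the second part to the first, while Morris's recursion peels off the entire largest part of $\mu$ as a weighted horizontal strip; the statement that ``the $\tilde e_1$-highest tableaux of content $\mu$ encode the ways of adjoining the relevant strip'' is not substantiated and is not obviously true (the set of such tableaux of shape $\lambda$ has cardinality $K_{\lambda,\mu}-K_{\lambda,\mu+\alpha_1}$, which is not visibly a count of horizontal strips). Since all the real content of the theorem lives in this identification, deferring it to \cite{LLT} amounts to citing the theorem you are trying to prove.

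A minor but real slip: in the base case, the unique tableau of shape and content $\lambda$ has cocharge $n(\lambda)=\sum_i(i-1)\lambda_i$ (its charge is $0$), and $\widetilde K_{\lambda,\lambda}(t)=t^{n(\lambda)}$, not $1$; you have written the charge-side values on the cocharge side. The base case still holds, so this does not break the induction, but it should be corrected. To complete the argument you would need either to carry out the horizontal-strip computation honestly (e.g., via Jing's vertex operators or the recursion in \cite{NR}), or to replace that step with the cyclage-poset analysis of \cite{LLT}, in which case the whole proof is theirs rather than a new one.
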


The following proposition is a consequence of Theorems~\ref{Th_fund} and
\ref{TH_LScharge}. It shows how the charge and cocharge statistics are
modified when a modified crystal operator is applied to a tableau $T$ of shape
$\lambda$ (regarded as a vertex of the crystal $B(\lambda)$). For any positive
root $\alpha$, let $\left\vert \alpha\right\vert $ be the height of $\alpha$,
that is the number of simple roots appearing in the decomposition of $\alpha$
as a sum of simple roots.

\begin{proposition}
\label{Prop_Modop_charge}Let $T$ be a vertex of $B(\lambda)$ of weight $\mu$
and $\alpha$ a positive root such that $\mathrm{f}_{\alpha}(T)\neq0$. Then
\[
\mathrm{co}(\mathrm{f}_{\alpha}(T))=\mathrm{co}(T)\text{ and }\mathrm{c}%
(\mathrm{f}_{\alpha}(T))=\mathrm{c}(T)+\left\vert \alpha\right\vert .
\]
\end{proposition}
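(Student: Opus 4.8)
The plan is to deduce the whole statement from the single identity $\mathrm{co}(\mathrm{f}_{\alpha}(T))=\mathrm{co}(T)$, since the charge assertion then follows by a one-line computation. Recall that in type $A$ every positive root lies in $W\alpha_1$, so I may write $\alpha=\varepsilon_i-\varepsilon_j$ with $i<j$. The function $\|\cdot\|$ is linear on the weight lattice, whence $\|\mu-\alpha\|=\|\mu\|-\|\alpha\|$ with $\|\alpha\|=(i-1)-(j-1)=i-j=-|\alpha|$. As $\mathrm{f}_{\alpha}(T)$ has weight $\mu-\alpha$, granting the cocharge identity I would obtain
\[\mathrm{c}(\mathrm{f}_{\alpha}(T))=\|\mu-\alpha\|-\mathrm{co}(\mathrm{f}_{\alpha}(T))=\|\mu\|+|\alpha|-\mathrm{co}(T)=\mathrm{c}(T)+|\alpha|\,.\]
Thus the entire content is the invariance of cocharge under $\mathrm{f}_{\alpha}$.

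For the cocharge, I would exploit that $\mathrm{f}_{\alpha}$ intertwines the cyclage, which is exactly Corollary~\ref{Cor_fc=cf}. Let $T=T_0\rightsquigarrow T_1\rightsquigarrow\cdots\rightsquigarrow T_m=R_\mu$ be the cyclage sequence of $T$, so that $m=\mathrm{co}(T)$. Starting from $\mathrm{f}_{\alpha}(T_0)=\mathrm{f}_{\alpha}(T)\neq 0$ and applying Corollary~\ref{Cor_fc=cf} repeatedly gives $\mathrm{f}_{\alpha}(T_{k+1})=\mathrm{f}_{\alpha}(C(T_k))=C(\mathrm{f}_{\alpha}(T_k))\neq 0$ at every step; hence $\mathrm{f}_{\alpha}(T_0)\rightsquigarrow\mathrm{f}_{\alpha}(T_1)\rightsquigarrow\cdots\rightsquigarrow\mathrm{f}_{\alpha}(T_m)$ is precisely the cyclage sequence issued from $\mathrm{f}_{\alpha}(T)$. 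I would then check that the row tableau is sent to the row tableau, i.e.\ $\mathrm{f}_{\alpha}(R_\mu)=R_{\mu-\alpha}$: writing $\mathrm{f}_{\alpha}=w\tilde{f}_1w^{-1}$, the Kashiwara action of $w^{-1}$ sends the sink $R_\mu$ of $\mathrm{Tab}_\mu$ to the sink $R_{w^{-1}\mu}$ by Theorem~\ref{Th_fund}(1); applying $\tilde{f}_1$ keeps this single row a single row (of weight $w^{-1}\mu-\alpha_1$, and nonzero by the above), and a final application of $w$ returns the sink $R_{w(w^{-1}\mu-\alpha_1)}=R_{\mu-\alpha}$. Consequently the image sequence terminates at the row tableau $R_{\mu-\alpha}$, which already yields $\mathrm{co}(\mathrm{f}_{\alpha}(T))\le m$.

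The delicate point — and the step I expect to be the main obstacle — is to rule out that the image sequence reaches its sink strictly earlier, which would make the cocharge drop. Here I would use that $\mathrm{f}_{\alpha}$, being a conjugate of a Kashiwara operator on the fixed crystal $B(\mathrm{shape}(T_k))$, \emph{preserves the shape} of a tableau, while cyclage \emph{preserves its weight}. Indeed, suppose $\mathrm{f}_{\alpha}(T_k)=R_{\mu-\alpha}$ for some $k$. Since $R_{\mu-\alpha}$ has a single row and $\mathrm{f}_{\alpha}$ preserves shape, $T_k$ must itself be a single-row tableau; as cyclage preserves the content $\mu$, the only such tableau is $R_\mu$, so $T_k=R_\mu$ and therefore $k\ge m$. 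The converse implication $T_k=R_\mu\Rightarrow\mathrm{f}_{\alpha}(T_k)=R_{\mu-\alpha}$ is immediate, so the two sequences reach their respective row tableaux for the first time at the same index. This gives $\mathrm{co}(\mathrm{f}_{\alpha}(T))=m=\mathrm{co}(T)$, completing the argument.
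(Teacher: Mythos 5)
Your proof is correct and follows essentially the same route as the paper: the cocharge invariance comes from the fact that $\mathrm{f}_{\alpha}$ commutes with cyclage and preserves shape (the paper cites Theorem~\ref{Th_fund}(1),(2) directly where you invoke Corollary~\ref{Cor_fc=cf} and make the termination-at-the-sink argument explicit), and the charge shift then follows from the same linear computation of $\left\Vert \mu-\alpha\right\Vert-\left\Vert \mu\right\Vert=\left\vert\alpha\right\vert$. The only difference is one of detail: you spell out why the image cyclage sequence cannot reach its row tableau early, a point the paper leaves implicit.
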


\begin{proof}
Since $\mathrm{f}_{\alpha}$ is obtained by conjugation of the action of
$\tilde{f}_{1}$, we get the equality $\mathrm{co}(\mathrm{f}_{\alpha
}(T))=\mathrm{co}(T)$ from Assertion 1 and 2 of Theorem~\ref{Th_fund}.\ Write
$\alpha=\varepsilon_{i}-\varepsilon_{i}$ with $1<i<j<n$. Then $\mathrm{f}%
_{\alpha}(T)$ has weight $\mu-\alpha$. Since $\mathrm{co}(\mathrm{f}_{\alpha
}(T))=\mathrm{co}(T)$, we have
\begin{align*}
\mathrm{c}(\mathrm{f}_{\alpha}(T))-\mathrm{c}(T)&=\left\Vert \mu-\alpha
\right\Vert -\left\Vert \mu\right\Vert \\
&=(i-1)((\mu_{i}-1)-\mu_{i})+(j-1)((\mu_{j}+1)-\mu_{j})=j-i=\left\vert
\alpha\right\vert\,.
\end{align*}
\end{proof}

\subsection{Proof of Lemmas \ref{PropRed}~(1) and \ref{PropRede}~(1)}
\label{sec:red1}

Observe first that it suffices to prove Lemmas~\ref{PropRed}~(1) and
\ref{PropRede}~(1) in type $A_{2}$. To do this, we will use the tableau
realization of the crystal $B(\lambda)$. As we restrict to type $A_{2}$, we
view $\lambda$ as a partition $\lambda=(\lambda_{1}\geq\lambda_{2}\geq0)$, and
we index the vertices of $B(\lambda)$ by semistandard tableaux of shape
$\lambda$ with the alphabet $\{1<2<3\}$. We adopt the English notation for
semistandard tableaux. The row reading of a semistandard tableau $T$ is the
word $\mathrm{w}(T)$ obtained by reading its rows from right to left and top
to bottom. The actions of $\mathrm{f}_{1}$, $s_{\alpha_{1}}$, and
$s_{\alpha_{2}}$ on $B(\lambda)$ are computed by using the following classical procedures.

For any $i=1,2$, write $w_{i}$ for the subword of $\mathrm{w}(T)$ formed by
the letters in $\{i,i+1\}$. Let $w_{i}^{\mathrm{red}}=(i+1)^{r}i^{s}$ be the
subword of $w_{i}$ obtained by recursively deleting factors $i(i+1)$.\ Now
consider $w_{i}^{\mathrm{red}}$ as a subword of $\mathrm{w}(T)$.\ If $r>s$,
then $s_{\alpha_{i}}$ is obtained by replacing in $\mathrm{w}(T)$ the $r-s$
rightmost letters $i+1$ of $w_{i}^{\mathrm{red}}$ with $i$. Otherwise,
$s_{\alpha_{i}}$ is obtained by replacing in $\mathrm{w}(T)$ the $s-r$
leftmost letters $i$ of $w_{i}^{\mathrm{red}}$ with $i+1$. Observe here that
for each factorization $w_{i}=(i+1)^{a}u_{i}i^{a}$, we have $s_{i}%
(w_{i})=(i+1)^{a}s_{i}(u_{i})i^{a}$.

The action of $\mathrm{f}_{1}$ on a tableau $T$ is straightforward. If $T$
does not contain any letter $1$, then $\mathrm{f}_{1}(T)=0$. Otherwise
$\mathrm{f}_{1}(T)$ is obtained by replacing its rightmost letter $1$ (always
in its first row) with $2$.

Let us now describe the actions of $\mathrm{f}_{\alpha_{1}+\alpha_{2}}$ and
$\mathrm{f}_{2}$ on a tableau $T$. Let $m_{i}^{(k)}=m_{i}^{(k)}(T)$ be the
number of letters $i$ in the $k$-th row of ${T}$. Let $\mu=(\mu_{1},\mu
_{2},\mu_{3})$ be the \emph{content} of $T$, i.e., $\mu_{i}$ is the number of
letters $i$ in $T$.

\begin{lemma}
\label{f_1,3)}Assume $\mu_{1}>\mu_{3}.$

\begin{enumerate}
\item If $T$ does not contains any letter $2$, then $\mathrm{f}_{\alpha
_{1}+\alpha_{2}}(T)$ is obtained by changing in $T$ the rightmost letter $1$
into a letter $3$.

\item Otherwise $\mathrm{f}_{\alpha_{1}+\alpha_{2}}(T)$ is obtained by
changing in $T$ the rightmost letter $1$ into a letter $2$ and

\begin{itemize}
\item the rightmost letter $2$ in its first row into a letter $3$ when
$m_{2}^{(2)}\leq m_{3}^{(1)},$

\item the rightmost letter $2$ in its second row into a letter $3$ when
$m_{2}^{(2)}>m_{3}^{(1)}.$
\end{itemize}
\end{enumerate}
\end{lemma}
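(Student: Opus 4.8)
The plan is to give a direct combinatorial verification by tracking the action of the modified operators $\mathrm{f}_{\alpha_1+\alpha_2}$ and $\mathrm{f}_2$ on semistandard tableaux in the three-letter alphabet $\{1<2<3\}$. Recall that $\mathrm{f}_{\alpha_1+\alpha_2}=w\tilde f_1w^{-1}$ and $\mathrm{f}_2=s_1\tilde f_1 s_1$, where $w$ is the shortest element with $w(\alpha_1)=\alpha_1+\alpha_2=\varepsilon_1-\varepsilon_3$; concretely $w=s_2$, so $\mathrm{f}_{\alpha_1+\alpha_2}=s_2\tilde f_1 s_2$. Thus in both cases we must: (i)~apply the explicit reflection rule described in the excerpt (the deletion-of-$i(i+1)$-factors recipe) to pass to the appropriate Weyl-conjugate tableau; (ii)~apply the elementary rule for $\tilde f_1$, which replaces the rightmost $1$ in the first row by a $2$; and (iii)~apply the inverse reflection to return. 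The hypothesis $\mu_1>\mu_3$ guarantees that $\langle\mathrm{wt}(T),\alpha_1+\alpha_2\rangle>0$, so by Lemma~\ref{fenonzero}~(1) the operator is nonzero and a valid output exists.

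First I would compute $s_2(T)$ explicitly using the row-reading rule. Writing $w_2=3^au_23^a$-type factorizations and reading off the subword $w_2^{\mathrm{red}}=3^r2^s$, the reflection $s_2$ interchanges the multiplicities of $2$'s and $3$'s in a controlled way while fixing all $1$'s. I would then locate the rightmost $1$ of $s_2(T)$ (which remains in the first row, since $s_2$ does not move $1$'s) and change it to a $2$ via $\tilde f_1$, and finally apply $s_2$ again. The bulk of the argument is a careful bookkeeping of where the modified letters land after the final reflection, organized according to whether a letter $2$ is present and according to the comparison $m_2^{(2)}\le m_3^{(1)}$ versus $m_2^{(2)}>m_3^{(1)}$; this comparison is exactly what controls, under $s_2$, whether the newly created $3$ ends up originating from the first or second row. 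For $\mathrm{f}_2=s_1\tilde f_1 s_1$ a parallel computation using $s_1$ (which fixes $3$'s and acts on the $\{1,2\}$-subword) should be carried out to confirm consistency, though the statement of Lemma~\ref{f_1,3)} as written concerns only $\mathrm{f}_{\alpha_1+\alpha_2}$.

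The two cases split naturally. In case~(1), where $T$ has no $2$'s, the content constraint forces $s_2$ to act trivially on the $\{2,3\}$-subword beyond exposing the $3$'s, and one checks that the composite $s_2\tilde f_1 s_2$ simply promotes the rightmost $1$ directly to a $3$; the semistandardness of the result follows because the vacated $1$-position can accommodate a $3$ precisely when no $2$ intervenes. In case~(2), the presence of $2$'s means that after changing the rightmost $1$ to a $2$ and conjugating back by $s_2$, a compensating $2\to3$ change is produced, and the decisive point is identifying its row. Here I expect the main obstacle to be verifying that the $s_2$-conjugation rule, applied to the tableau after the $\tilde f_1$ step, yields exactly the row selection dictated by the sign of $m_2^{(2)}-m_3^{(1)}$, including checking semistandardness of the output in each subcase. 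This amounts to a finite but delicate case analysis of how the factor-cancellation procedure for $s_2$ redistributes the $2$'s and $3$'s between the two rows, and I would verify it by examining the few possible local configurations at the rightmost columns containing these letters.
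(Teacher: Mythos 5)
Your proposal takes essentially the same route as the paper's proof: identify $\mathrm{f}_{\alpha_1+\alpha_2}=s_{\alpha_2}\tilde f_1 s_{\alpha_2}$ (since $s_2(\alpha_1)=\alpha_1+\alpha_2$), compute the composite directly on the reading word $3^a2^b1^c3^d2^e$ via the pairing/cancellation rule for $s_{\alpha_2}$, and organize the bookkeeping by the comparison $m_2^{(2)}\le m_3^{(1)}$ versus $m_2^{(2)}>m_3^{(1)}$ --- which is exactly the paper's case split after it cancels the $\min(a,e)$ paired letters to reduce to $e=0$ or $a=0$. One immaterial slip in a side remark: $\mathrm{f}_2=s_1\mathrm{f}_{\alpha_1+\alpha_2}s_1=s_1s_2\tilde f_1s_2s_1$, not $s_1\tilde f_1 s_1$ (the latter equals $\mathrm{e}_1$), but this does not affect the lemma being proved.
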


\begin{proof}
Recall that $\mathrm{f}_{\alpha_{1}+\alpha_{2}}(T)=s_{\alpha_{2}}%
\mathrm{f}_{1}s_{\alpha_{2}}(T)$.

Let ${\mathrm{w}}(T)=3^{a}2^{b}1^{c}3^{d}2^{e}$. The assumption $\mu_{1}%
>\mu_{3}$ is written $c>a+d\ge0$. Let $m=\min(a,e)$. By the definition of the
actions of $s_{\alpha_{2}}$ and $\mathrm{f}_{1}$, the contributions of the $m$
leftmost letters $2$ in the second row and that of the $m$ rightmost letters
$3$ in the first row cancel. So, without loss of generality, we have the
following two cases, in which we show the successive actions of $s_{\alpha
_{2}}$, $\mathrm{f}_{1}$, and $s_{\alpha_{2}}$; the paired entries when
applying $s_{\alpha_{2}}$ are underlined.

{\bf Case 1:} $a\ge e=0$.
\begin{align*}
3^{a}\,2^{b}\,1^{c}\,3^{d} & =3^{a}\,2^{\max(b-d,0)}\,\underline{2^{\min
(b,d)}}\,1^{c}\,\underline{3^{\min(b,d)}}\,3^{\max(d-b,0)}%
\xrightarrow{s_{\alpha_{2}}}\\
& \xrightarrow{s_{\alpha_{2}}}3^{\max(b-d,0)}\,2^{a}\,2^{\min(b,d)}%
\,1^{c}\,3^{\min(b,d)}\,2^{\max(d-b,0)}\xrightarrow{\mathrm{f}_{1}}\\
& \xrightarrow{\mathrm{f}_{1}}3^{\max(b-d,0)}\,2^{a+1}\,\underline
{2^{\min(b,d)}}\,1^{c-1}\,\underline{3^{\min(b,d)}}\,2^{\max(d-b,0)}%
\xrightarrow{s_{\alpha_{2}}}\\
& \xrightarrow{s_{\alpha_{2}}} 3^{a+1}\,2^{\max(b-d,0)}\,2^{\min
(b,d)}\,1^{c-1}\,3^{\min(b,d)}\,3^{\max(d-b,0)}=3^{a+1}\,2^{b}\,1^{c-1}%
\,3^{d}\,.
\end{align*}

{\bf Case 2:} $e> a=0$.
\begin{align*}
2^{b}\,1^{c}\,3^{d}\,2^{e} & =2^{\max(b-d,0)}\,\underline{2^{\min(b,d)}%
}\,1^{c}\,\underline{3^{\min(b,d)}}\,3^{\max(d-b,0)}\,2^{e}%
\xrightarrow{s_{\alpha_{2}}}\\
& \xrightarrow{s_{\alpha_{2}}} 3^{\max(b-d,0)}\,2^{\min(b,d)}\,1^{c}%
\,3^{\min(b,d)}\,3^{e}\,2^{\max(d-b,0)}\xrightarrow{\mathrm{f}_{1}}\\
& \xrightarrow{\mathrm{f}_{1}} 3^{\max(b-d,0)}\,\underline{2^{\min(b,d)+1}%
}\,1^{c-1}\,\underline{3^{\min(b,d)+1}}\,3^{e-1}\,2^{\max(d-b,0)}%
\xrightarrow{s_{\alpha_{2}}}\\
& \xrightarrow{s_{\alpha_{2}}} 2^{\max(b-d,0)}\,2^{\min(b,d)+1}\,1^{c-1}%
\,3^{\min(b,d)+1}\,3^{\max(d-b,0)}2^{e-1} =2^{b+1}\,1^{c-1}\,3^{d+1}%
\,2^{e-1}\,.
\end{align*}
\end{proof}

\begin{lemma}
\label{lemma_f_(2)}Assume $\mu_{2}>\mu_{3}$.\ If $m_{2}^{(2)}\leq m_{3}^{(1)}$
(respectively $m_{2}^{(2)}>m_{3}^{(1)}$), the tableau $\mathrm{f}_{2}(T)$ is
obtained by changing in $T$ the rightmost letter $2$ in the first
(respectively second) row into a letter $3$.
\end{lemma}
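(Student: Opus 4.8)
The statement is already in type $A_2$, so $T$ is a two-row semistandard tableau on $\{1<2<3\}$; as in the proof of Lemma~\ref{f_1,3)} I write its rows as $1^{c}2^{b}3^{a}$ and $2^{e}3^{d}$, so that $m_3^{(1)}=a$ and $m_2^{(2)}=e$, and the hypothesis reads $\mu_2=b+e>a+d=\mu_3$. The plan is to reduce everything to the description of $\mathrm{f}_{\alpha_1+\alpha_2}$ already obtained in Lemma~\ref{f_1,3)}. First I would record the identity
\[\mathrm{f}_2=s_{\alpha_1}\,\mathrm{f}_{\alpha_1+\alpha_2}\,s_{\alpha_1}\,,\]
which holds because the shortest $w\in W$ with $w(\alpha_1)=\alpha_2$ is $w=s_{\alpha_1}s_{\alpha_2}$, while $\mathrm{f}_{\alpha_1+\alpha_2}=s_{\alpha_2}\tilde f_1 s_{\alpha_2}$ (recalled in the proof of Lemma~\ref{f_1,3)}); conjugating the latter by $s_{\alpha_1}$ gives $s_{\alpha_1}s_{\alpha_2}\tilde f_1 s_{\alpha_2}s_{\alpha_1}=w\tilde f_1 w^{-1}=\mathrm{f}_{\alpha_2}$. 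Thus $\mathrm{f}_2(T)=s_{\alpha_1}(U)$ with $U:=\mathrm{f}_{\alpha_1+\alpha_2}(T')$ and $T':=s_{\alpha_1}(T)$.

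The engine of the argument is a single structural fact about $s_{\alpha_1}$: it is computed on the reading word by changing only letters $1\leftrightarrow 2$, never touching a letter $3$ nor its position, and its output is the reading word of a tableau of the same shape; hence $s_{\alpha_1}$ fixes every box containing a $3$. Applying this to $T'$ gives $m_3^{(1)}(T')=a$; moreover row~$2$ of $T'$ has no $1$ (a $1$ cannot sit below another box), so its $e$ non-$3$ boxes are $2$'s and $m_2^{(2)}(T')=e$. Since $T'$ has content $(\mu_2,\mu_1,\mu_3)$ with $\mu_2>\mu_3$, Lemma~\ref{f_1,3)} applies to $T'$, governed by the same comparison $e\le a$ versus $e>a$. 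In either outcome $U$ has exactly one more $3$-box than $T'$: in column $b+c$ of row~$1$ when $e\le a$, and in column $e$ of row~$2$ when $e>a$ (one reads off these columns from the row shapes $1^{b+e}2^{c-e}3^{a}$ and $2^{e}3^{d}$ of $T'$, using the semistandardness bound $e\le c$). Call this extra box $\kappa$.

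I would then identify $\mathrm{f}_2(T)=s_{\alpha_1}(U)$ from its content and its set of $3$-boxes alone. By $3$-inertness, the $3$-boxes of $s_{\alpha_1}(U)$ are those of $T'$, equivalently of $T$, together with $\kappa$; and $\mathrm{f}_2(T)$ has the same number of $1$'s as $T$, one fewer $2$, and one more $3$. Since in any semistandard tableau the boxes equal to $1$ are forced to be the leftmost boxes of row~$1$, the tableaux $T$ and $\mathrm{f}_2(T)$ have identical $1$-boxes; combined with the equality of $3$-sets up to $\kappa$, this forces $\mathrm{f}_2(T)$ to coincide with $T$ outside $\kappa$, where $T$ carries a $2$ and $\mathrm{f}_2(T)$ a $3$. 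Finally I would read $\kappa$ back in $T$: the rightmost $2$ of row~$1$ of $T$ is again in column $b+c$, and the rightmost $2$ of row~$2$ of $T$ is again in column $e$, so $\kappa$ is exactly the box named in the statement. Nonvanishing, $\mathrm{f}_2(T)\ne 0$, is Lemma~\ref{fenonzero}(1), since $\langle\mathrm{wt}(T),\alpha_2\rangle=\mu_2-\mu_3>0$.

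The step I expect to be the main obstacle is this last identification: $\mathrm{f}_{\alpha_1+\alpha_2}$ alters up to two boxes of $T'$, and $s_{\alpha_1}$ is non-local, so a priori $s_{\alpha_1}(U)$ and $T=s_{\alpha_1}(T')$ could differ widely. What defuses it is the combination of two facts—that $s_{\alpha_1}$ fixes all $3$-boxes, and that the $1$-boxes of a semistandard tableau are determined by their number—which together pin down $\mathrm{f}_2(T)$ from $T$ up to the single box $\kappa$; the inequality $e\le c$ (the number of $2$'s in row~$2$ never exceeds the number of $1$'s in row~$1$, forced by semistandardness) is exactly what keeps the resulting box change semistandard. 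As an independent check, one may instead run the word computation $s_{\alpha_1}s_{\alpha_2}\tilde f_1 s_{\alpha_2}s_{\alpha_1}$ on $3^{a}2^{b}1^{c}3^{d}2^{e}$ in the style of the proof of Lemma~\ref{f_1,3)}, with $e\le c$ collapsing the bracketing of the outer $s_{\alpha_1}$ to a single case.
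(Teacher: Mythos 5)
Your proof is correct and follows essentially the same route as the paper's: both use the identity $\mathrm{f}_{2}=s_{\alpha_{1}}\mathrm{f}_{\alpha_{1}+\alpha_{2}}s_{\alpha_{1}}$, apply Lemma~\ref{f_1,3)} to $s_{\alpha_{1}}(T)$ (whose content $(\mu_2,\mu_1,\mu_3)$ satisfies the required inequality), and split into the cases $e\le a$ versus $e>a$. The only difference is that the paper carries out the outer conjugation by an explicit bracketing computation on the reading word $3^{a}2^{b}1^{c}3^{d}2^{e}$, whereas you replace that computation with the observation that $s_{\alpha_{1}}$ fixes all $3$-boxes and that the $1$-boxes are determined by their number --- a harmless and slightly cleaner variant of the same argument.
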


\begin{proof}
Recall that $\mathrm{f}_{2}(T)=s_{1}\mathrm{f}_{\alpha_{1}+\alpha_{2}}%
s_{1}(T)$.

Let ${\mathrm{w}}(T)=3^{a}2^{b}1^{c}3^{d}2^{e}$, so we have $c\ge e$. The
assumption $\mu_{2}>\mu_{3}$ is written $b+e>a+d$. Based on Lemma~\ref{f_1,3)}%
, we consider the following two cases, in which we show the successive actions
of $s_{\alpha_{1}}$, $\mathrm{f}_{\alpha_{1}+\alpha_{2}}$, and $s_{\alpha_{1}%
}$; the paired entries when applying $s_{\alpha_{1}}$ are underlined.

{\bf Case 1:} $a\ge e$. We have $b+e>a+d\ge d+e$, so $b>d\ge0$.
\begin{align*}
3^{a}\,2^{b}\,1^{c}\,3^{d}\,2^{e} & =3^{a}\,2^{b}\,1^{c-e}\,\underline{1^{e}%
}\,3^{d}\,\underline{2^{e}}\xrightarrow{s_{\alpha_{1}}} 3^{a}\,2^{c-e}%
\,1^{b+e}\,3^{d}\,2^{e}\xrightarrow{\mathrm{f}_{\alpha_{1}+\alpha_{2}}}3^{a+1}%
\,2^{c-e}\,1^{b+e-1}\,3^{d}\,2^{e}=\\
& =3^{a+1}\,2^{c-e}\,1^{b-1}\,\underline{1^{e}}\,3^{d}\,\underline{2^{e}%
}\xrightarrow{s_{\alpha_{1}}}3^{a+1}\,2^{b-1}\,1^{c}\,3^{d}\,2^{e}\,.
\end{align*}

{\bf Case 2:} $e> a\ge0$.
\begin{align*}
3^{a}\,2^{b}\,1^{c}\,3^{d}\,2^{e} & \xrightarrow{s_{\alpha_{1}}}
3^{a}\,2^{c-e}\,1^{b+e}\,3^{d}\,2^{e}%
\xrightarrow{\mathrm{f}_{\alpha_{1}+\alpha_{2}}}3^{a}\,2^{c-e+1}%
\,1^{b+e-1}\,3^{d+1}\,2^{e-1}=\\
& =3^{a}\,2^{c-e+1}\,1^{b}\,\underline{1^{e-1}}\,3^{d+1}\,\underline{2^{e-1}%
}\xrightarrow{s_{\alpha_{1}}}3^{a}\,2^{b}\,1^{c}\,3^{d+1}\,2^{e-1}\,.
\end{align*}
\end{proof}

\begin{corollary}
\label{cor_e_2} Consider a tableau $T$ of weight $(\mu_{1},\mu_{2},\mu_{3})$
such that $\mu_{2}\geq\mu_{3}$. Then $\mathrm{e}_{2}(T)=0$ if and only if
$m_{2}^{(2)}\geq m_{3}^{(1)}$ and $m_{3}^{(2)}=0$.
\end{corollary}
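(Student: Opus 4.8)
The plan is to exploit that $\mathrm{e}_2$ is the partial inverse of $\mathrm{f}_2$. By the basic property recorded in Section~\ref{sec:modcr} (namely $\mathrm{f}_\alpha(b)=b'$ if and only if $\mathrm{e}_\alpha(b')=b$), we have $\mathrm{e}_2(T)\neq 0$ exactly when $T=\mathrm{f}_2(T')$ for some semistandard tableau $T'$ of the same shape. I would therefore prove the negation of the asserted equivalence, that is, $\mathrm{e}_2(T)\neq 0$ precisely when $m_2^{(2)}<m_3^{(1)}$ or $m_3^{(2)}>0$, by deciding exactly when such a preimage $T'$ exists.

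The input is Lemma~\ref{lemma_f_(2)}, which says that $\mathrm{f}_2$ converts a single letter $2$ into a letter $3$, in the first row when $m_2^{(2)}(T')\leq m_3^{(1)}(T')$ and in the second row otherwise. Since the rows are weakly increasing, the $3$ so created is the leftmost $3$ of its row in $T=\mathrm{f}_2(T')$. Hence $T$ admits at most two candidate preimages: the tableau $T'_{(a)}$ obtained from $T$ by changing the leftmost $3$ of row~$1$ into a $2$, and the tableau $T'_{(b)}$ obtained by changing the leftmost $3$ of row~$2$ into a $2$; moreover, since $\mathrm{f}_2$ is injective, at most one of these can map back to $T$. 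I would then check, for each candidate, the three requirements: that it is semistandard, that the correct inequality between $m_2^{(2)}$ and $m_3^{(1)}$ holds so as to fall into the matching sub-case of Lemma~\ref{lemma_f_(2)}, and that $\mathrm{f}_2$ indeed returns $T$.

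Translating the sub-case inequalities into statistics of $T$ itself is the routine part. For $T'_{(a)}$ one has $m_3^{(1)}(T'_{(a)})=m_3^{(1)}(T)-1$ while $m_2^{(2)}$ is unchanged, so the condition $m_2^{(2)}(T'_{(a)})\leq m_3^{(1)}(T'_{(a)})$ reads $m_2^{(2)}(T)<m_3^{(1)}(T)$, which also forces $m_3^{(1)}(T)\geq 1$; and since a $3$ in row~$1$ never has a cell below it, $T'_{(a)}$ is automatically semistandard. For $T'_{(b)}$ one has $m_2^{(2)}(T'_{(b)})=m_2^{(2)}(T)+1$ with $m_3^{(1)}$ unchanged, so the condition $m_2^{(2)}(T'_{(b)})>m_3^{(1)}(T'_{(b)})$ reads $m_2^{(2)}(T)\geq m_3^{(1)}(T)$, while the existence of a $3$ to move is $m_3^{(2)}(T)\geq 1$. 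As the two sub-cases are governed by the mutually exclusive inequalities $m_2^{(2)}<m_3^{(1)}$ and $m_2^{(2)}\geq m_3^{(1)}$, the tableau $T$ has a preimage exactly when either $m_2^{(2)}(T)<m_3^{(1)}(T)$ (via $T'_{(a)}$) or $m_3^{(2)}(T)>0$ together with $m_2^{(2)}(T)\geq m_3^{(1)}(T)$ (via $T'_{(b)}$); negating this yields the claimed description of $\mathrm{e}_2(T)=0$.

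The step I expect to require the most care, and to be the main obstacle, is the semistandardness of $T'_{(b)}$: replacing the leftmost $3$ of row~$2$ by a $2$ is legitimate only if the entry directly above it in row~$1$ is a $1$, which is equivalent to $m_2^{(2)}(T)<m_1^{(1)}(T)$. Here I would invoke column-strictness, which already gives $m_1^{(1)}\geq m_2^{(2)}$ (this inequality is exactly the $c\geq e$ noted in the proof of Lemma~\ref{lemma_f_(2)}), so that the only delicate configuration is the boundary case $m_2^{(2)}=m_1^{(1)}$, in which the leftmost $3$ of row~$2$ sits below a $2$; this is precisely where the hypothesis $\mu_2\geq\mu_3$ must be brought to bear to control whether the row-$2$ reversal is admissible. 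Once the status of this case is settled, the forward verification that $\mathrm{f}_2(T'_{(b)})=T$ is immediate from Lemma~\ref{lemma_f_(2)}, and the remaining content of the argument is the bookkeeping above.
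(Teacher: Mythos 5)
Your strategy is the same as the paper's: invert Lemma~\ref{lemma_f_(2)} by testing the two candidate preimages (change the leftmost $3$ of row~1, respectively of row~2, into a $2$) and translate the sub-case inequalities of that lemma back into the statistics $m_2^{(2)},m_3^{(1)},m_3^{(2)}$ of $T$. All of that bookkeeping is correct and matches the paper's argument, which exhibits your $T'_{(a)}$ when $m_2^{(2)}<m_3^{(1)}$, your $T'_{(b)}$ when $m_2^{(2)}\ge m_3^{(1)}$ and $m_3^{(2)}>0$, and handles the converse by elimination. You are in fact more careful than the paper, which asserts without comment that the row-$2$ modification produces a tableau.

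The gap is exactly the point you flag and then defer: the semistandardness of $T'_{(b)}$ in the boundary case $m_2^{(2)}=m_1^{(1)}$. You propose to dispose of it using the hypothesis $\mu_2\ge\mu_3$, but that cannot work, because the stated equivalence genuinely fails there. Take $T$ of shape $(2,2)$ with first row $1\,2$ and second row $2\,3$: then $\mu=(1,2,1)$, so $\mu_2\ge\mu_3$ holds, and $m_2^{(2)}=1\ge m_3^{(1)}=0$ with $m_3^{(2)}=1>0$, so the corollary predicts $\mathrm{e}_2(T)\ne0$; yet a preimage under $\mathrm{f}_2$ would be a semistandard tableau of shape $(2,2)$ and weight $(1,3,0)$, which does not exist, so $\mathrm{e}_2(T)=0$. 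The obstructing configuration is precisely $m_2^{(2)}=m_1^{(1)}$ together with $m_2^{(1)}\ge1$ (the leftmost $3$ of row~2 sits under a $2$), and it forces $\mu_2=m_2^{(1)}+m_2^{(2)}>m_1^{(1)}=\mu_1$. So to close your argument you need the additional hypothesis $\mu_1\ge\mu_2$ (equivalently, that the content is a partition): under it, $m_2^{(2)}=m_1^{(1)}$ forces $m_2^{(1)}=0$, and then column-strictness of $T$ itself already forbids $m_3^{(2)}>0$, so the delicate case is vacuous. This extra hypothesis does hold in the two places where Corollary~\ref{cor_e_2} is invoked in the proof of Lemma~\ref{lem:tworow}, but it is not part of the statement, and neither your proposal nor the paper's proof supplies the missing verification.
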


\begin{proof}
Assume $m_{2}^{(2)}<m_{3}^{(1)}$. Since $m_{3}^{(1)}>0$, we can consider the
tableau $T'$ obtained by changing the leftmost letter $3$ in $2$ in
the first row of $T$. By the previous lemma, we have then $\mathrm{e}%
_{2}(T)=T'\neq0$.\ Similarly, when $m_{2}^{(2)}\geq m_{3}^{(1)}$ and
$m_{3}^{(2)}>0$, we also get $\mathrm{e}_{2}(T)\neq T'$ where
$T'$ is then obtained from $T$ by changing the leftmost letter $3$ in
$2$ in the second row. Conversely, when $m_{2}^{(2)}\geq m_{3}^{(1)}$ and
$m_{3}^{(2)}=0$, the tableau $T'$ obtained from $T$ by changing the
leftmost letter $3$ in $2$ in the first row (if any) does not satisfy
$\mathrm{f}_{2}(T')=T$. Thus, we have $\mathrm{e}_{2}(T)=0$.
\end{proof}

The following lemma is a rephrasing of the statement in Lemma~\ref{PropRed}%
~(1). It is an easy consequence of Lemmas~\ref{f_1,3)} and \ref{lemma_f_(2)}.

\begin{lemma}
\label{actf12} If the tableau $T$ is such that $\mu_{1}>\mu_{2}>\mu_{3}$, then
we have
\[
\mathrm{f}_{1}\mathrm{f}_{2}(T)=\mathrm{f}_{2}\mathrm{f}_{1}(T)=\mathrm{f}%
_{\alpha_{1}+\alpha_{2}}(T)\ne0\,.
\]
\end{lemma}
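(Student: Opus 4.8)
The plan is to establish the three-way equality by directly comparing the explicit tableau descriptions of $\mathrm{f}_1$, $\mathrm{f}_2$, and $\mathrm{f}_{\alpha_1+\alpha_2}$ recorded above, working as always in type $A_2$. First I would note that the hypothesis $\mu_1>\mu_2>\mu_3\geq 0$ forces $\mu_1\geq 2$ and $\mu_2\geq 1$, so $T$ contains both a letter $1$ and a letter $2$; in particular $\mathrm{f}_1(T)\neq 0$, and (since $\mu_2>\mu_3$ and $\mu_1>\mu_3$) Lemmas \ref{lemma_f_(2)} and \ref{f_1,3)} both apply, the latter in its case (2). Reading off these lemmas, $\mathrm{f}_1$ changes the rightmost $1$ (necessarily in the first row) into a $2$; the operator $\mathrm{f}_2$ changes into a $3$ the rightmost $2$ in the first row if $m_2^{(2)}\leq m_3^{(1)}$ and the rightmost $2$ in the second row if $m_2^{(2)}>m_3^{(1)}$; and $\mathrm{f}_{\alpha_1+\alpha_2}$ performs \emph{both} of these moves at once (the $1\to 2$ move together with precisely the same $2\to 3$ move).

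The key observation I would make is that the two cell modifications comprising $\mathrm{f}_{\alpha_1+\alpha_2}(T)$ act on disjoint cells and hence commute. The $1\to 2$ move targets the right end of the block of $1$'s in the first row, whereas the target of the $2\to 3$ move is the right end of the block of $2$'s, either in the first row (strictly to the right of all $1$'s) or entirely in the second row; crucially, this right boundary of the $2$-block does not move when a $1$ is turned into a $2$, so the $2\to 3$ move targets the same cell before and after $\mathrm{f}_1$ is applied. This reduces the lemma to checking that the subcase selector of Lemma \ref{lemma_f_(2)}, namely the comparison of $m_2^{(2)}$ with $m_3^{(1)}$, is stable under $\mathrm{f}_1$: this is immediate, since replacing a $1$ by a $2$ in the first row alters neither the number $m_2^{(2)}$ of $2$'s in the second row nor the number $m_3^{(1)}$ of $3$'s in the first row. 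I would also record the routine applicability checks: after $\mathrm{f}_1$ the content becomes $(\mu_1-1,\mu_2+1,\mu_3)$, which still satisfies $\mu_2+1>\mu_3$ so Lemma \ref{lemma_f_(2)} remains in force, while after $\mathrm{f}_2$ the letters $1$ are untouched so $\mathrm{f}_1$ still acts as stated. Hence $\mathrm{f}_1\mathrm{f}_2(T)$ and $\mathrm{f}_2\mathrm{f}_1(T)$ both realize the same pair of disjoint cell changes, which is exactly $\mathrm{f}_{\alpha_1+\alpha_2}(T)$.

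The one point requiring care, and the main (if minor) obstacle, is the well-definedness of the phrase \emph{the rightmost $2$ in the first row} in the subcase $m_2^{(2)}\leq m_3^{(1)}$, which presupposes $m_2^{(1)}>0$. I would dispose of this with a short semistandardness argument: if $m_2^{(1)}=0$, then all $\mu_2$ copies of $2$ lie in the second row, so $m_2^{(2)}=\mu_2$, whereas $m_3^{(1)}\leq\mu_3$; combined with $\mu_2>\mu_3$ this gives $m_2^{(2)}=\mu_2>\mu_3\geq m_3^{(1)}$, so we are automatically in the \emph{other} subcase and the first-row move is never invoked when it is undefined. With this checked, the two orders of composition agree with $\mathrm{f}_{\alpha_1+\alpha_2}(T)$ in both subcases. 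Finally, nonvanishing follows at once because $\langle\mathrm{wt}(T),\alpha_1+\alpha_2\rangle=\mu_1-\mu_3>0$, whence $\mathrm{f}_{\alpha_1+\alpha_2}(T)\neq 0$ by Lemma \ref{Lemma_f_dominant}(1) (equivalently, the explicit description plainly outputs a genuine semistandard tableau), completing the proof.
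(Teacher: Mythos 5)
Your proof is correct and follows exactly the route the paper intends: the paper simply declares the lemma "an easy consequence of Lemmas~\ref{f_1,3)} and \ref{lemma_f_(2)}," and you have carried out precisely that verification (disjointness of the two cell modifications, stability of the selector $m_2^{(2)}$ versus $m_3^{(1)}$ under $\mathrm{f}_1$, and the well-definedness check when $m_2^{(1)}=0$). No discrepancies with the paper's argument.
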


Lemma~\ref{PropRede}~(1) is rephrased as follows.

\begin{lemma}
\label{acte12} If the tableau $T$ is such that $\mu_{1}\ge\mu_{2}\ge\mu_{3}$,
$\mathrm{e}_{1}(T)\ne0$, and $\mathrm{e}_{2}(T)\ne0$, then we have
\[
\mathrm{e}_{1}\mathrm{e}_{2}(T)=\mathrm{e}_{2}\mathrm{e}_{1}(T)=\mathrm{e}%
_{\alpha_{1}+\alpha_{2}}(T)\ne0\,.
\]
\end{lemma}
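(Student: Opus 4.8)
The plan is to deduce Lemma~\ref{acte12} from Lemma~\ref{actf12} by exploiting the fact that $\mathrm{e}_{\alpha}$ and $\mathrm{f}_{\alpha}$ are mutually inverse partial bijections, so that a commutation identity for the $\mathrm{f}$ operators translates into one for the $\mathrm{e}$ operators, \emph{provided} one is careful about the domains on which each operator is nonzero. Concretely, suppose $T$ satisfies $\mu_{1}\ge\mu_{2}\ge\mu_{3}$, $\mathrm{e}_{1}(T)\ne0$, and $\mathrm{e}_{2}(T)\ne0$. First I would set $T_1:=\mathrm{e}_1(T)$ and $T_2:=\mathrm{e}_2(T)$, record their weights via $\mathrm{wt}(\mathrm{e}_{\alpha}(b))=\mathrm{wt}(b)+\alpha$, and then apply Lemma~\ref{actf12} to the appropriate lower vertices. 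The idea is that $\mathrm{e}_1\mathrm{e}_2(T)$ and $\mathrm{e}_2\mathrm{e}_1(T)$ are both candidates for the unique vertex $T'$ from which $T$ is reached by applying $\mathrm{f}_1\mathrm{f}_2$ (equivalently $\mathrm{f}_2\mathrm{f}_1$, equivalently $\mathrm{f}_{\alpha_1+\alpha_2}$), so establishing that this common preimage exists and is nonzero forces the two iterated $\mathrm{e}$-operators to agree.

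The key steps, in order, would be as follows. First I would verify that the weight conditions on $T$ guarantee the strict inequalities $\mu_1'>\mu_2'>\mu_3'$ needed to invoke Lemma~\ref{actf12} at the relevant vertex: the hypotheses $\mathrm{e}_1(T)\ne0$ and $\mathrm{e}_2(T)\ne0$ should, together with $\mu_1\ge\mu_2\ge\mu_3$, be upgraded to show that the putative vertex $T':=\mathrm{e}_{\alpha_1+\alpha_2}(T)$ has a weight in the domain of applicability. I expect to use Corollary~\ref{cor_e_2} (and its analogue for $\mathrm{e}_1$) to characterize precisely when $\mathrm{e}_1(T)$ and $\mathrm{e}_2(T)$ are nonzero in terms of the quantities $m_2^{(2)}$, $m_3^{(1)}$, $m_3^{(2)}$, since these combinatorial conditions control whether the weight inequalities are strict. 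Second, with $T'$ in hand satisfying the hypotheses of Lemma~\ref{actf12}, I would read that lemma as the statement
\[
\mathrm{f}_1\mathrm{f}_2(T')=\mathrm{f}_2\mathrm{f}_1(T')=\mathrm{f}_{\alpha_1+\alpha_2}(T')=T\,,
\]
which is justified once we check $\mathrm{wt}(T')=\mathrm{wt}(T)-\alpha_1-\alpha_2$. Third, since each $\mathrm{f}_{\alpha}(b)=b''$ is equivalent to $\mathrm{e}_{\alpha}(b'')=b$, inverting the three equal compositions yields $\mathrm{e}_2\mathrm{e}_1(T)=\mathrm{e}_1\mathrm{e}_2(T)=\mathrm{e}_{\alpha_1+\alpha_2}(T)=T'\ne0$, which is exactly the claim.

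The main obstacle I anticipate is the boundary bookkeeping in the first step: Lemma~\ref{actf12} requires the \emph{strict} chain $\mu_1>\mu_2>\mu_3$, whereas the hypotheses of Lemma~\ref{acte12} only supply the \emph{weak} chain $\mu_1\ge\mu_2\ge\mu_3$ together with the two nonvanishing conditions. I would need to argue that the nonvanishing of $\mathrm{e}_1(T)$ and $\mathrm{e}_2(T)$ rules out the degenerate equalities at the vertex $T'$ where the lemma is actually applied—that is, after passing to $T'=\mathrm{e}_{\alpha_1+\alpha_2}(T)$, whose content is $(\mu_1-1,\mu_2,\mu_3+1)$, the relevant inequalities become strict. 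This is where Corollary~\ref{cor_e_2} does the decisive work: the condition $\mathrm{e}_2(T)\ne0$ means $m_2^{(2)}<m_3^{(1)}$ or $m_3^{(2)}>0$, and one must trace how these translate into the content inequalities for $T'$ and confirm that Lemma~\ref{actf12} genuinely applies. Once this domain analysis is settled, the rest is the formal inversion of partial bijections, which is routine given the equivalence $\mathrm{f}_{\alpha}(b)=b'\iff\mathrm{e}_{\alpha}(b')=b$ recorded earlier.
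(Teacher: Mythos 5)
Your high-level architecture coincides with the paper's: produce the common preimage $T''$ of $T$ under $\mathrm{f}_1\mathrm{f}_2=\mathrm{f}_2\mathrm{f}_1=\mathrm{f}_{\alpha_1+\alpha_2}$, apply Lemma~\ref{actf12} to $T''$, and invert the partial bijections. However, your weight bookkeeping goes wrong: since $\mathrm{wt}(\mathrm{e}_\alpha(b))=\mathrm{wt}(b)+\alpha$ (which you yourself record), the preimage has content $(\mu_1+1,\mu_2,\mu_3-1)$, not $(\mu_1-1,\mu_2,\mu_3+1)$ as you write. With the correct sign, the strict chain $\mu_1+1>\mu_2>\mu_3-1$ follows at once from $\mu_1\ge\mu_2\ge\mu_3$, so the ``boundary bookkeeping'' you single out as the main obstacle is in fact a non-issue.

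The genuine gap is elsewhere: you never establish that the preimage exists, i.e., that $\mathrm{e}_1\mathrm{e}_2(T)\ne 0$ (equivalently, that $\mathrm{e}_{\alpha_1+\alpha_2}(T)\ne0$). This cannot be extracted from content inequalities alone: Lemma~\ref{fenonzero}~(2) gives nothing here because $\langle\mathrm{wt}(T),\alpha_1+\alpha_2\rangle=\mu_1-\mu_3\ge0$, and Corollary~\ref{cor_e_2} only characterizes when $\mathrm{e}_2(T)\ne 0$, which is already a hypothesis; ``tracing how these translate into the content inequalities for $T'$'' cannot close the argument, since vanishing of crystal operators is not determined by the content. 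What the paper actually does at this point is use the explicit description of $\mathrm{f}_2$ from Lemma~\ref{lemma_f_(2)}: writing $T'=\mathrm{e}_2(T)$, the tableau $T=\mathrm{f}_2(T')$ is obtained from $T'$ by turning a $2$ into a $3$ in row one or in row two, so $m_2^{(1)}(T)$ equals $m_2^{(1)}(T')-1$ or $m_2^{(1)}(T')$; since $\mathrm{e}_1(T)\ne0$ forces $m_2^{(1)}(T)>0$, one gets $m_2^{(1)}(T')>0$ and hence $T'':=\mathrm{e}_1(T')\ne0$, after which Lemma~\ref{actf12} applies to $T''$. That case analysis is the substance of the proof, and your proposal is missing it.
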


\begin{proof}
Let $T':=\mathrm{e}_{2}(T)$. We have $\langle\mathrm{wt}(T^{\prime
}),\alpha_{2}\rangle>0$. Therefore, the action of $\mathrm{f}_{2}$ on
$T'$, which produces $T$, is described by Lemma~\ref{lemma_f_(2)}.
Thus, we have the following two cases, where we use the notation $m_{i}%
^{(k)}(\cdot)$ introduced above.

{\bf Case 1:} $m_{2}^{(2)}(T')\leq m_{3}^{(1)}(T')$. In this
case, we have $m_{2}^{(1)}(T)=m_{2}^{(1)}(T')-1$. Since $\mathrm{e}%
_{1}(T)\ne0$, we have $m_{2}^{(1)}(T)>0$, which implies $m_{2}^{(1)}%
(T')>0$. It means that $T^{\prime\prime}:=\mathrm{e}_{1}(T^{\prime
})\ne0$.

{\bf Case 2:} $m_{2}^{(2)}(T')>m_{3}^{(1)}(T')$. In this
case, we have $m_{2}^{(1)}(T)=m_{2}^{(1)}(T')$. In the same way as in
Case 1, we deduce $T^{\prime\prime}:=\mathrm{e}_{1}(T')\ne0$.

In both cases, we have $\mathrm{wt}(T^{\prime\prime})=\mathrm{wt}%
(T)+\alpha_{1}+\alpha_{2}$. Therefore, the hypothesis of Lemma~\ref{actf12} is
satisfied for $T^{\prime\prime}$, and the proof is completed by applying this lemma.
\end{proof}

\subsection{Proof of Lemmas \ref{PropRed}~(2) and \ref{PropRede}~(2)}

\label{sec:red2}

The difficulty is that the second parts of Lemmas~\ref{PropRed} and
\ref{PropRede} reduce to type $A_{3}$, rather than type $A_{2}$, like the
first parts. The action of the modified crystal operator $\mathrm{f}_{3}$ can
also be described in the same spirit as in Lemma~\ref{lemma_f_(2)}, but this
requires the enumeration of numerous configurations. Fortunately, in order to
prove Lemmas~\ref{PropRed}~(2) and \ref{PropRede}~(2), this can be avoided by
using the cyclage on semistandard tableaux.

Lemma~{\ref{PropRed}}~(2) is now rephrased as follows.

\begin{lemma}
\label{actf13} If the tableau $T\ $is such that $\mu_{1}>\mu_{2}$ and $\mu
_{3}>\mu_{4}$, then we have
\[
\mathrm{f}_{1}\mathrm{f}_{3}(T)=\mathrm{f}_{3}\mathrm{f}_{1}(T)\neq0\,.
\]
\end{lemma}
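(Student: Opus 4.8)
The plan is to prove the commutation by induction on the cocharge $\mathrm{co}(T)$, exploiting the compatibility of the modified crystal operators with the cyclage established in Corollary~\ref{Cor_fc=cf}. First, the nonvanishing is immediate and independent of the equality: since $\langle\mathrm{wt}(T),\alpha_3\rangle=\mu_3-\mu_4>0$, Lemma~\ref{fenonzero}~(1) gives $\mathrm{f}_3(T)\neq0$, and as $\mathrm{f}_3$ lowers the weight by $\alpha_3$ we still have $\langle\mathrm{wt}(\mathrm{f}_3(T)),\alpha_1\rangle=\mu_1-\mu_2>0$, whence $\mathrm{f}_1\mathrm{f}_3(T)\neq0$; the same argument with the roles of $\alpha_1$ and $\alpha_3$ interchanged gives $\mathrm{f}_3\mathrm{f}_1(T)\neq0$. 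It therefore remains to prove the equality $\mathrm{f}_1\mathrm{f}_3(T)=\mathrm{f}_3\mathrm{f}_1(T)$, which I will establish for \emph{every} $T\in\mathrm{Tab}_\mu$ with $\mu_1>\mu_2$ and $\mu_3>\mu_4$. The two structural facts that make the induction work are that cyclage preserves the content $\mu$, so the weight inequalities are inherited by $C(T)$, and that the operators $\mathrm{f}_1,\mathrm{f}_3$ preserve the shape, being conjugates of $\tilde f_1$ under the Kashiwara Weyl-group action on $B(\lambda)$.

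For the inductive step, suppose $\mathrm{co}(T)>0$, so that $C(T)$ is defined with $\mathrm{co}(C(T))=\mathrm{co}(T)-1$. Writing $T_1:=\mathrm{f}_3(T)$ and using that $\mathrm{f}_3(T)\neq0$ and $\mathrm{f}_1(T_1)\neq0$, two applications of Corollary~\ref{Cor_fc=cf} give
\[
C(\mathrm{f}_1\mathrm{f}_3(T))=\mathrm{f}_1\,C(\mathrm{f}_3(T))=\mathrm{f}_1\mathrm{f}_3\,C(T)\,,
\]
and symmetrically $C(\mathrm{f}_3\mathrm{f}_1(T))=\mathrm{f}_3\mathrm{f}_1\,C(T)$. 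Since $C(T)$ has the same content $\mu$ but strictly smaller cocharge, the induction hypothesis yields $\mathrm{f}_1\mathrm{f}_3\,C(T)=\mathrm{f}_3\mathrm{f}_1\,C(T)$, hence $C(\mathrm{f}_1\mathrm{f}_3(T))=C(\mathrm{f}_3\mathrm{f}_1(T))$. Both $\mathrm{f}_1\mathrm{f}_3(T)$ and $\mathrm{f}_3\mathrm{f}_1(T)$ are tableaux of the same shape $\lambda$, so the injectivity of cyclage on a fixed shape, Theorem~\ref{Th_fund}~(3), forces $\mathrm{f}_1\mathrm{f}_3(T)=\mathrm{f}_3\mathrm{f}_1(T)$.

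The base case is $\mathrm{co}(T)=0$, i.e., $T=R_\mu$ the row tableau. Here the argument is trivial: $\mathrm{f}_1$ and $\mathrm{f}_3$ preserve the single-row shape and lower the weight by $\alpha_1$ and $\alpha_3$ respectively, and a single-row semistandard tableau is uniquely determined by its content; hence $\mathrm{f}_1(R_\mu)=R_{\mu-\alpha_1}$, $\mathrm{f}_3(R_\mu)=R_{\mu-\alpha_3}$, and both composites equal $R_{\mu-\alpha_1-\alpha_3}$. The one point requiring care—and the main obstacle—is the interplay between the shape-preservation of the modified operators and the shape-changing nature of the cyclage: the scheme works precisely because $\mathrm{f}_1,\mathrm{f}_3$ keep us inside a single shape, so that Theorem~\ref{Th_fund}~(3) applies, while $C$ lowers cocharge within the fixed content class $\mathrm{Tab}_\mu$, so that the induction terminates at $R_\mu$; and because Corollary~\ref{Cor_fc=cf} is only available when the relevant operator does not annihilate the tableau, a hypothesis guaranteed throughout by the weight inequalities via Lemma~\ref{fenonzero}.
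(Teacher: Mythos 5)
Your proof is correct and follows essentially the same route as the paper's: establish nonvanishing via Lemma~\ref{fenonzero}, then induct on the cocharge using the commutation of cyclage with the modified operators (your Corollary~\ref{Cor_fc=cf} is exactly the tool the paper invokes via Theorem~\ref{Th_fund}~(1),(2)), and conclude from the injectivity of cyclage on a fixed shape, Theorem~\ref{Th_fund}~(3). The base case and the handling of the weight inequalities match the paper's argument as well.
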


\begin{proof}
By Lemma~\ref{Lemma_f_dominant}, we have $\mathrm{f}_{1}\mathrm{f}_{3}%
(T)\neq0$ and $\mathrm{f}_{3}\mathrm{f}_{1}(T)\neq0$.\ We argue by induction
on the cocharge.\ When $\mathrm{co}(T)=0$, the tableau $T$ has row shape, and
the equality is clear since $\mathrm{f}_{1}\mathrm{f}_{3}(T)=\mathrm{f}%
_{3}\mathrm{f}_{1}(T)$ is the unique row of weight $\mathrm{wt}(T)-\alpha
_{1}-\alpha_{3}$.\ Assume that the lemma holds for any tableau with cocharge
$k-1$, and consider $T$ such that $\mathrm{co}(T)=k$.\ Set $\mathrm{f}%
_{1}\mathrm{f}_{3}(T)=U$ and $\mathrm{f}_{3}\mathrm{f}_{1}(T)=U'$.\ By
Theorem~\ref{Th_fund}~(2), we get $\mathrm{f}_{1}(C(T))=C(\mathrm{f}_{1}(T))$.
Then $\mathrm{f}_{3}\mathrm{f}_{1}(C(T))=\mathrm{f}_{3}C(\mathrm{f}_{1}%
(T))$.\ But the cyclage operation also commutes with $\mathrm{f}%
_{3}\mathrm{\ }$(in fact with any modified crystal operator), because it
commutes with the actions of $W$ and $\mathrm{f}_{1}$, by
Theorem~\ref{Th_fund}~(1),~(2). So we have $\mathrm{f}_{3}\mathrm{f}%
_{1}(C(T))=C(\mathrm{f}_{3}\mathrm{f}_{1}(T))=C(U)$.\ We obtain similarly the
equality $\mathrm{f}_{1}\mathrm{f}_{3}(C(T))=C(\mathrm{f}_{1}\mathrm{f}%
_{3}(T))=C(U')$.\ By our induction hypothesis, we thus deduce that
$C(U)=C(U')$. But since $C(U)$ and $C(U')$ have the same shape
(i.e., that of $C(T)$), Theorem~\ref{Th_fund}~(3) implies the desired equality
$U=U'$.
\end{proof}

Lemma~{\ref{PropRede}}~(2) is rephrased as follows.

\begin{lemma}
\label{acte13} If the tableau $T\ $is such that $\mu_{1}\geq\mu_{2}>\mu
_{3}\geq\mu_{4}$, $\mathrm{e}_{1}(T)\neq0$ and $\mathrm{e}_{3}(T)\neq0$, then
we have
\[
\mathrm{e}_{1}\mathrm{e}_{3}(T)=\mathrm{e}_{3}\mathrm{e}_{1}(T)\neq0\,.
\]
\end{lemma}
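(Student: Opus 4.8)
The plan is to reduce Lemma~\ref{acte13} to the already established Lemma~\ref{actf13}, exactly as Lemma~\ref{acte12} was reduced to Lemma~\ref{actf12}. Concretely, set $T'':=\mathrm{e}_1\mathrm{e}_3(T)$ and suppose for the moment that $T''\neq0$. Since $\mathrm{wt}(T'')=\mathrm{wt}(T)+\alpha_1+\alpha_3$, the content of $T''$ is $(\mu_1+1,\mu_2-1,\mu_3+1,\mu_4-1)$, and the inequalities $\mu_1+1>\mu_2-1$ and $\mu_3+1>\mu_4-1$ hold automatically from $\mu_1\ge\mu_2$ and $\mu_3\ge\mu_4$. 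Thus $T''$ satisfies the hypotheses of Lemma~\ref{actf13}, which gives $\mathrm{f}_1\mathrm{f}_3(T'')=\mathrm{f}_3\mathrm{f}_1(T'')$. Because $\mathrm{f}_i$ inverts $\mathrm{e}_i$ on any vertex where the latter is nonzero, both sides equal $T$: indeed $\mathrm{f}_3\mathrm{f}_1(T'')=\mathrm{f}_3\mathrm{f}_1\mathrm{e}_1\mathrm{e}_3(T)=\mathrm{f}_3\mathrm{e}_3(T)=T$. Reading the identity $\mathrm{f}_1\mathrm{f}_3(T'')=T$ backwards yields $T''=\mathrm{e}_3\mathrm{e}_1(T)$, whence $\mathrm{e}_1\mathrm{e}_3(T)=T''=\mathrm{e}_3\mathrm{e}_1(T)\neq0$, as desired.

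The whole statement therefore rests on the non-vanishing $T''=\mathrm{e}_1\mathrm{e}_3(T)\neq0$ (equivalently $\tilde{e}_1(\mathrm{e}_3(T))\neq0$, recalling $\mathrm{e}_1=\tilde{e}_1$). Here I would not appeal to an explicit description of $\mathrm{e}_3$ --- which, as noted at the beginning of Section~\ref{sec:red2}, is cumbersome --- but argue by induction on the cocharge $\mathrm{co}(T)$, in parallel with the proof of Lemma~\ref{actf13}. The base case $\mathrm{co}(T)=0$ is vacuous: a one-row tableau with $\mu_1\ge\mu_2$ has $\mathrm{e}_1=\tilde{e}_1$ acting as $0$, so the hypotheses cannot hold. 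For the inductive step I would pass to $C(T)$, which has cocharge $\mathrm{co}(T)-1$ and the same content (hence the same weight inequalities), and which still satisfies $\mathrm{e}_1(C(T))=C(\mathrm{e}_1(T))\neq0$ and $\mathrm{e}_3(C(T))=C(\mathrm{e}_3(T))\neq0$; these commutations are the backward reading of Corollary~\ref{Cor_fc=cf}, valid precisely because $\mathrm{e}_1(T),\mathrm{e}_3(T)\neq0$. Note that modified operators preserve cocharge by Proposition~\ref{Prop_Modop_charge}, so only the cyclage lowers it, which is what drives the induction.

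The main obstacle is transferring the non-vanishing back from $C(T)$ to $T$. The difficulty is exactly the asymmetry recorded in the footnote to Corollary~\ref{Cor_fc=cf}: from $\mathrm{e}_1(\mathrm{e}_3(C(T)))\neq0$, i.e. $\mathrm{e}_1(C(\mathrm{e}_3(T)))\neq0$, one cannot directly conclude $\mathrm{e}_1(\mathrm{e}_3(T))\neq0$, because $\mathrm{e}_\alpha(C(X))\neq0$ does not force $\mathrm{e}_\alpha(X)\neq0$. I would circumvent this by working on the $\mathrm{f}$-side, where Corollary~\ref{Cor_fc=cf} is unconditional: the induction hypothesis gives a nonzero $W:=\mathrm{e}_1\mathrm{e}_3(C(T))$ of the same shape as $C(T)$, Lemma~\ref{actf13} gives $\mathrm{f}_1\mathrm{f}_3(W)=\mathrm{f}_3\mathrm{f}_1(W)=C(T)$, and I would then use that $\mathrm{f}_1,\mathrm{f}_3$ commute with cyclage together with the shape-injectivity of cyclage (Theorem~\ref{Th_fund}~(3)) to produce a cyclage preimage $Z$ of $W$ of the same shape as $T$ with $\mathrm{f}_1\mathrm{f}_3(Z)=T$, forcing $Z=\mathrm{e}_1\mathrm{e}_3(T)=T''\neq0$. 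Making this un-cyclage step precise --- matching the shape of $Z$ with that of $T$ via Theorem~\ref{Th_fund}~(3) --- is the delicate heart of the argument. An alternative, should it prove simpler in practice, is to establish $\tilde{e}_1(\mathrm{e}_3(T))\neq0$ directly by showing that $\varepsilon_1$ is left unchanged by the orthogonal modified operator $\mathrm{e}_3$, which reduces to a finite type $A_3$ verification.
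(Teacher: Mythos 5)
Your overall skeleton --- induction on the cocharge, with the commutativity itself obtained at the end by applying Lemma~\ref{actf13} to $T''=\mathrm{e}_1\mathrm{e}_3(T)$ --- matches the paper's, and you correctly isolate the real issue, namely the non-vanishing of $T''$. But your resolution of that issue has a genuine gap. The ``un-cyclage'' step is circular: Theorem~\ref{Th_fund}~(3) gives \emph{uniqueness} of a cyclage preimage of a prescribed shape, not \emph{existence}, and exhibiting a preimage $Z$ of $W=\mathrm{e}_1\mathrm{e}_3(C(T))$ of the shape of $T$ with $\mathrm{f}_1\mathrm{f}_3(Z)=T$ is equivalent to saying that $T$ lies in the image of $\mathrm{f}_1\mathrm{f}_3$, i.e.\ to $\mathrm{e}_3\mathrm{e}_1(T)\neq0$ --- the very statement to be proved. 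Your fallback, that $\mathrm{e}_3$ leaves $\varepsilon_1$ unchanged, is also not automatic: $\mathrm{e}_3$ is a Weyl-group conjugate of $\tilde{e}_1$ (the paper uses $\mathrm{e}_{3}=s_{2}s_{3}\mathrm{e}_{2}s_{3}s_{2}$) whose constituent reflections permute the letters $2$ and $3$ and can therefore alter the $\{1,2\}$-subword governing $\tilde{e}_1$; this interaction is precisely why the hypotheses $\mathrm{e}_1(T)\neq0$, $\mathrm{e}_3(T)\neq0$ and the content inequalities are genuinely needed.

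Moreover, the two-row case is missing and cannot be absorbed into the induction. The paper transfers non-vanishing from $C(T')$ back to $T'$ (with $T'=\mathrm{e}_3(T)$) by observing that when $T'$ has at least three rows the south-west letter used in the cyclage is at least $3$ and hence does not disturb the letters $1,2$, so $\mathrm{e}_1(C(T'))\neq0$ forces $\mathrm{e}_1(T')\neq0$; when $T'$ has only two rows that letter can be a $2$ and the transfer fails. This is why the paper first proves a separate two-row statement (Lemma~\ref{lem:tworow}) by an explicit case analysis based on Corollary~\ref{cor_e_2}, and only then runs the cocharge induction for tableaux with three or more rows. Without an analogue of that lemma your induction does not close. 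A smaller point: your base case is misstated --- a one-row tableau with $\mu_2>0$ satisfies $\tilde{e}_1(T)\neq0$ (e.g.\ the single row $12$), so the case $\mathrm{co}(T)=0$ is not vacuous; it is handled by noting that both sides equal the unique row tableau of content $\mu+\alpha_1+\alpha_3$.
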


We first prove a weaker version for two-row tableaux.

\begin{lemma}\label{lem:tworow}
Lemma~{\rm \ref{acte13}} is true for any two-row tableau $T$.
\end{lemma}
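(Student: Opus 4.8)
The plan is to reduce the commutation of the raising operators to the already–established commutation of the lowering operators in Lemma~\ref{actf13}, exploiting that $\mathrm{f}_{\alpha}$ and $\mathrm{e}_{\alpha}$ are mutually inverse partial bijections (as recorded in Section~\ref{sec:modcr}). The point is that this reduction is insensitive to the number of rows, so the two-row hypothesis will be needed only to verify a single non-vanishing statement.

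First I would show that it suffices to prove $\mathrm{e}_{1}\mathrm{e}_{3}(T)\neq 0$. Assuming this, set $S:=\mathrm{e}_{1}\mathrm{e}_{3}(T)$. Since $\mathrm{e}_{3}(T)\neq 0$ and $\mathrm{e}_{1}(\mathrm{e}_{3}(T))=S\neq 0$, undoing the two raisings gives $\mathrm{f}_{1}(S)=\mathrm{e}_{3}(T)$ and then $\mathrm{f}_{3}\mathrm{f}_{1}(S)=\mathrm{f}_{3}\mathrm{e}_{3}(T)=T$. Writing $\mathrm{wt}(S)=(\nu_{1},\nu_{2},\nu_{3},\nu_{4})=(\mu_{1}+1,\mu_{2}-1,\mu_{3}+1,\mu_{4}-1)$, the hypotheses $\mu_{1}\geq\mu_{2}$ and $\mu_{3}\geq\mu_{4}$ give $\nu_{1}>\nu_{2}$ and $\nu_{3}>\nu_{4}$, so Lemma~\ref{actf13} applies to $S$ and yields $\mathrm{f}_{1}\mathrm{f}_{3}(S)=\mathrm{f}_{3}\mathrm{f}_{1}(S)=T$. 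Reading the equality $\mathrm{f}_{1}\mathrm{f}_{3}(S)=T$ through the inverse operators gives $\mathrm{f}_{3}(S)=\mathrm{e}_{1}(T)$ and hence $S=\mathrm{e}_{3}\mathrm{e}_{1}(T)$. Therefore $\mathrm{e}_{1}\mathrm{e}_{3}(T)=S=\mathrm{e}_{3}\mathrm{e}_{1}(T)\neq 0$, which is the assertion.

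It remains to establish the non-vanishing $\mathrm{e}_{1}\mathrm{e}_{3}(T)\neq 0$, and here I would use that $T$ has two rows. Since $\mathrm{e}_{1}=\tilde{e}_{1}$ depends only on the subword of $1$'s and $2$'s of the reading word, which for a two-row tableau equals $2^{a_{2}}1^{a_{1}}2^{b_{2}}$ (with $a_{1},a_{2}$ the numbers of $1$'s and $2$'s in the first row and $b_{2}$ the number of $2$'s in the second), the condition $\tilde{e}_{1}\neq 0$ reads $a_{2}>0$ or $b_{2}>a_{1}$. Thus it is enough to show that $\mathrm{e}_{3}$ leaves every letter $1$ and $2$ in place, i.e. that it does not change $a_{1},a_{2},b_{2}$; then $\tilde{e}_{1}$ remains applicable to $\mathrm{e}_{3}(T)$ because it was applicable to $T$. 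To prove this I would describe the action of $\mathrm{e}_{3}=\mathrm{e}_{\varepsilon_{3}-\varepsilon_{4}}$ on two-row tableaux explicitly, in the same spirit as the description of $\mathrm{e}_{2}$ in Corollary~\ref{cor_e_2}: it changes a single letter $4$ into a $3$, in the first or second row according to a comparison of the numbers of $3$'s and $4$'s in the two rows, and in particular it never disturbs a $1$ or a $2$.

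The main obstacle is exactly this last step. Unlike $\mathrm{e}_{1}=\tilde{e}_{1}$, the operator $\mathrm{e}_{3}$ is a conjugate $w\tilde{e}_{1}w^{-1}$ of $\tilde{e}_{1}$ by a length-four Weyl group element, whose intermediate reflections act on the letters $1$ and $2$ as well, so it is not a priori clear that the net effect is confined to the letters $3$ and $4$. This is the very difficulty that the authors circumvented for $\mathrm{f}_{3}$ by passing to cyclage; here the restriction to two rows keeps the configurations to be analysed finite and tractable, and I expect the strict dominance gap $\mu_{2}>\mu_{3}$ to be what guarantees the separation between the $\{1,2\}$-part and the $\{3,4\}$-part of $T$ that makes the confinement—and hence the non-vanishing—hold.
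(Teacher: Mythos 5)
Your reduction step is correct and is essentially the paper's: once one of the two composites $\mathrm{e}_{1}\mathrm{e}_{3}(T)$, $\mathrm{e}_{3}\mathrm{e}_{1}(T)$ is known to be nonzero, applying Lemma~\ref{actf13} to the resulting vertex $S$ of weight $\mu+\alpha_1+\alpha_3$ (whose coordinates satisfy $\nu_1>\nu_2$ and $\nu_3>\nu_4$ thanks to $\mu_1\ge\mu_2$ and $\mu_3\ge\mu_4$) and inverting the $\mathrm{f}_\cdot$'s back to $\mathrm{e}_\cdot$'s yields the commutation. The paper does exactly this, just in the other order (it shows $\mathrm{e}_3(\mathrm{e}_1(T))\neq 0$ and applies Lemma~\ref{actf13} to $T''=\mathrm{e}_3\mathrm{e}_1(T)$).

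However, the non-vanishing claim is the entire content of Lemma~\ref{lem:tworow}, and you have not proved it — you say so yourself in your last paragraph. Your proposed route rests on the unverified assertion that $\mathrm{e}_3$ acts on a two-row tableau by turning a single letter $4$ into a $3$ without disturbing the letters $1$ and $2$; since $\mathrm{e}_3=s_2s_3\mathrm{e}_2s_3s_2$ and the intermediate reflections $s_2$ pair and relabel $2$'s and $3$'s, this confinement is exactly what needs to be established, and nothing in your write-up does so. The paper's proof spends essentially all of its length on this point, and argues quite differently: it assumes for contradiction that $\mathrm{e}_3(T')=0$ for $T'=\mathrm{e}_1(T)$, rewrites the hypothesis as $\mathrm{e}_2(U')=0$ and $\mathrm{e}_2(U)\ne 0$ for $U=s_3s_2(T)$, $U'=s_3s_2(T')$, relates $U$ to $U'$ via $U=s_3\mathrm{f}_{\varepsilon_1-\varepsilon_3}s_3(U')$, and then runs a case analysis on the letter counts $m_i^{(k)}$ using the explicit description of $\mathrm{f}_{\varepsilon_1-\varepsilon_3}$ (Lemma~\ref{f_1,3)}) and the vanishing criterion for $\mathrm{e}_2$ (Corollary~\ref{cor_e_2}) to reach the contradiction $\mathrm{e}_2(U)=0$. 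Until you either carry out an explicit description of $\mathrm{e}_3$ on two-row tableaux (which may well be feasible, but is a genuine computation with several configurations) or supply an argument of the paper's contradiction type, your submission is a plan rather than a proof.
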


\begin{proof}
Observe first we must have $\mu_{3}>0$. Otherwise $\mu_{3}=\mu_{4}=0$ and we
cannot have $\mathrm{e}_{3}(T)\neq0$ for this would give a tableau of
evaluation $(\mu_{1},\mu_{2},1,-1)$. Set $T'=\mathrm{e}_{1}(T)$. When
$\mathrm{e}_{3}(T')\neq0$, the tableau $T^{\prime\prime}%
=\mathrm{e}_{3}(T')$ has weight $(\mu_{1}+1,\mu_{2}-1,$ $\mu_{3}%
+1,\mu_{4}-1)$ and by applying Lemma~\ref{actf13} to $T^{\prime\prime}$ we
get
\[
T=\mathrm{f}_{1}\mathrm{f}_{3}(T^{\prime\prime})=\mathrm{f}_{3}\mathrm{f}%
_{1}(T^{\prime\prime})\neq0\,,
\]
which is equivalent to the desired equality.

We claim we cannot have $\mathrm{e}_{3}(T')=0$.\ Indeed, assume
$\mathrm{e}_{3}(T')=0$. Recall we have $\mathrm{e}_{3}=s_{2}%
s_{3}\mathrm{e}_{2}s_{3}s_{2}$. We obtain $\mathrm{e}_{2}s_{3}s_{2}(T^{\prime
})=0$ and $\mathrm{e}_{2}s_{3}s_{2}(T)\neq0$ because $\mathrm{e}_{3}(T)\neq
0$.\ Set $U=s_{3}s_{2}(T)$ and $U'=s_{3}s_{2}(T')$. Then
$\mathrm{e}_{2}(U')=0$ and $\mathrm{e}_{2}(U)\neq0$.\ Since
$T=\mathrm{f}_{1}(T')$ we can write
\[
U=s_{3}s_{2}\mathrm{f}_{1}s_{2}s_{3}(U')\Longleftrightarrow
U=s_{3}\mathrm{f}_{\varepsilon_{1}-\varepsilon_{3}}s_{3}(U')\,.
\]
Moreover
\[
\mathrm{wt}(U')=(\mu_{1}+1,\mu_{3},\mu_{4},\mu_{2}-1)\text{ and
}\mathrm{wt}(s_{3}(U'))=(\mu_{1}+1,\mu_{3},\mu_{2}-1,\mu_{4})\,.
\]

We shall need the two integers $a=m_{2}^{(2)}(U')$ (thus $a\leq\mu
_{3}$ for $U'$ contains $\mu_{3}$ letters $2$) and $b=m_{4}^{(1)}%
(T)$.\ We get $m_{3}^{(1)}(U')=\mu_{4}$ and $a\geq\mu_{4}$ by
Corollary~\ref{cor_e_2}. In particular, there is no letter $3$ in the second
row of $U'$. We shall discuss three cases.

{\bf Case 1:} $m_{4}^{(2)}(U')\leq m_{3}^{(1)}(U')=\mu
_{4}$.\ During the computation of $s_{3}(U')$, all the letters $4$ in
the second row of $U'$ are paired with letters $3$ of its first
row.\ Since $\mu_{2}-1-\mu_{4}\geq0$ because $\mu_{2}>\mu_{3}\geq\mu_{4}$,
exactly $\mu_{2}-1-\mu_{4}$ letters $4$ of the first row are changed in
letters $3$.\ We get $m_{3}^{(1)}(s_{3}(U'))=\mu_{2}-1$ and
$m_{2}^{(2)}(s_{3}(U'))=m_{2}^{(2)}(U')=a$.\ Thus $m_{2}%
^{(2)}(s_{3}(U'))\leq m_{3}^{(1)}(s_{3}(U'))$ (otherwise
$\mu_{3}\geq a\geq\mu_{2}$). By Lemma ~\ref{f_1,3)}, $\mathrm{f}%
_{\varepsilon_{1}-\varepsilon_{3}}(s_{3}(U'))$ is thus obtained by
changing a letter $1$ in a letter $3$ in the first row of $s_{3}(U')$.
We get $U$ by applying $s_{3}$ to $\mathrm{f}_{\varepsilon_{1}-\varepsilon
_{3}}(s_{3}(U'))$, that is by changing $\mu_{2}-\mu_{4}$ letters $3$
in letters $4$ in its first row. We see that $U$ is obtained from $U'$
by changing a letter $1$ in a letter $4$ in its first row (up to reordering).
Therefore $U$ has no letter $2$ in its second row and $m_{2}^{(2)}(U)=a\geq
\mu_{4}=m_{3}^{(1)}(U)$. By Corollary~\ref{cor_e_2} we derive the
contradiction $\mathrm{e}_{2}(U)=0$.

{\bf Case 2:} $m_{4}^{(2)}(U')>m_{3}^{(1)}(U')=\mu_{4}$.
During the computation of $s_{3}(U')$, all the letters $3$ in the
first row of $U'$ are paired with letters $4$ of its second row. Then,
all the remaining $\mu_{2}-1-\mu_{4}$ letters $4$ are changed into letters $3$
in both rows. In particular, we get $m_{3}^{(1)}(s_{3}(U'))=\mu_{4}%
+b$. We shall consider two subcases.

{\bf Case 2.a:} $a=m_{2}^{(2)}(U')\leq m_{3}^{(1)}(s_{3}%
(U'))=\mu_{4}+b$. By Lemma~\ref{f_1,3)}, $\mathrm{f}_{\varepsilon
_{1}-\varepsilon_{3}}(s_{3}(U'))$ is then obtained by changing a
letter $1$ in a letter $3$ in the first row of $s_{3}(U')$ and we get
a contraction exactly as in Case 1.

{\bf Case 2.b:} $a=m_{2}^{(2)}(U')>m_{3}^{(1)}(s_{3}(U^{\prime
}))=\mu_{4}+b$. By Lemma~\ref{f_1,3)}, $\mathrm{f}_{\varepsilon
_{1}-\varepsilon_{3}}(s_{3}(U'))$ is then obtained from $s_{3}%
(U')$ by changing a letter $1$ in a letter $2$ in its first row and a
letter $2$ in a letter $3$ in its second row. We get $U$ by applying $s_{3}$
to $\mathrm{f}_{\varepsilon_{1}-\varepsilon_{3}}(s_{3}(U'))$, that is
by changing in letters $4$ all the letters $3$ unpaired with the letters $4$
located in the second row. We see that $U$ is obtained from $U'$ by
changing a letter $1$ in a letter $2$ in its first row and a letter $2$ in a
letter $4$ in its second row.\ Thus, there is no letter $3$ in the second row
of $U$. Moreover $m_{2}^{(2)}(U)=m_{2}^{(2)}(U')-1=a-1$.\ But by
hypothesis, $a>\mu_{4}+b,$ thus $a-1\geq\mu_{4}$ and $m_{2}^{(2)}(U)\geq
\mu_{4}=m_{3}^{(1)}(U)$. We yet get the contradiction $\mathrm{e}_{2}(U)=0$ by
Corollary~\ref{cor_e_2}.
\end{proof}

\bigskip

We can now prove Lemma~\ref{acte13} by induction on the cocharge.

\begin{proof}
Observe first Lemma~\ref{acte13} is clearly true for row tableaux, that is for
tableaux of cocharge $0$. Assume it holds for any tableau of cocharge $k$ and
the tableau $T$ considered as cocharge $k+1$.\ If $T$ has two rows we are done
by the previous lemma. So we can assume that $T$ has $3$ rows. Set
$T'=\mathrm{e}_{3}(T)$. By Corollary \ref{Cor_fc=cf}, we can apply our
induction hypothesis to the tableau $C(T)$, we must have $\mathrm{e}%
_{1}(C(T'))\neq0$.\ We claim this implies that $\mathrm{e}%
_{1}(T')\neq0$. Indeed, the conditions $\mathrm{e}_{1}(C(T'))\neq0$ but $\mathrm{e}_{1}(T')=0$ would imply that the letter of
$T'$ which is used in the cyclage operation is a $2$ (any other letter would
not modify the locations of the letters $1$ and $2$ in $T'$).\ But by
definition of the cyclage, $2$ is then the leftmost letter of the shortest row
of $T'$, which is only possible if $T'$ has two rows contrary to the assumption
we made. To terminate the proof it suffices to apply Lemma~\ref{actf13} to
$T^{\prime\prime}=\mathrm{e}_{1}(T')=\mathrm{e}_{1}\mathrm{e}_{3}(T)$.
\end{proof}

\begin{remarks}

{\rm (1) One can use a similar method to give alternative proofs of
Lemmas~{\ref{actf12}}~and~\ref{acte12}, that is, without making
explicit the action of $\mathrm{f}_{2}$ and $\mathrm{f}_{\alpha_{1}+\alpha
_{2}}$. Note that only the basic properties of cyclage mentioned in Section~\ref{sec:cyccharge} are used to carry out the induction step, and no related combinatorics. }

{\rm (2) On another hand, it would be interesting to prove Lemmas~\ref{actf13}~and~\ref{acte13} in a similar way to Lemmas~{\ref{actf12}}~and~\ref{acte12}, that is, by making explicit the action of $\mathrm{f}_{3}$. While such a proof would only use the crystal structure (without referring to cyclage), we found it challenging because the Weyl group action is not easy to express in any of the combinatorial models we considered, so we were led to an unmanageable number of cases. Nevertheless, the special case considered in the proof of Lemma~\ref{lem:tworow} is manageable with the tableau model.}
\end{remarks}

\subsection{Additional modified crystal operators in type $B_n$}

In type $B_n$, in addition to the operators $\mathrm{f}_\alpha$ for $\alpha\in W\alpha_1$, i.e., a long root, we need such operators indexed by short roots. They are defined completely similarly to~\eqref{defmodcr}. Namely, given a short root $\alpha$, consider {\em any} $w\in W$ satisfying $w(\alpha_n)=\alpha$. We then define
\begin{equation}\label{defmodcr1}\mathrm{f}_{\alpha}:=w\tilde{f}_{n}w^{-1}\,,\;\;\;\;\;\mathrm{e}_{\alpha}:=w\tilde{e}_{n}w^{-1}\,.\end{equation}
Note that, in this case, the definition does not depend on the choice of $w$, because the stabilizer of $\alpha_n$ is $W_{\alpha_n}=\langle s_1,\ldots,s_{n-2},s_{\varepsilon_{n-1}}\rangle$, so all its elements commute with $\tilde{f}_{n}$; see Remark~\ref{fv}. 

All the basic properties of the modified crystal operators in Section~\ref{sec:modcr} extend to the additional operators. Moreover, to the modified crystal graph $\mathbb{B}(\lambda)$ constructed before, we add the extra edges $b\overset{\alpha}{\dashrightarrow
}b'$ when $b'=\mathrm{f}_{\alpha}(b)$ and $\alpha$ is a short positive root. 

The following properties are needed for the additional operators.

\begin{theorem}\label{Th_comm_op_B} \hfill \begin{enumerate}
\item Given $i<j<k$, assume that $\mathrm{f}_{\varepsilon_{i}-\varepsilon_j}(b)\ne0$ and $\mathrm{f}_{\varepsilon_k}(b)\ne0$. Then we have
\[\mathrm{f}_{\varepsilon_{i}-\varepsilon_j}\mathrm{f}_{\varepsilon_k}(b)=\mathrm{f}_{\varepsilon_k}\mathrm{f}_{\varepsilon_{i}-\varepsilon_j}(b)\ne0\,.\]
The same is true with the $\mathrm{f}_\cdot$ operators replaced by the $\mathrm{e}_\cdot$ operators.
\item Consider a vertex $b$ with $\langle\mathrm{wt}(b),\varepsilon_{j-1}\rangle=1$ and $\langle\mathrm{wt}(b),\varepsilon_{j}\rangle=0$. Then we have
\[\mathrm{f}_{\varepsilon_j}\mathrm{f}_{\varepsilon_{j-1}-\varepsilon_j}(b)=\mathrm{f}_{\varepsilon_{j-1}}(b)\ne0\,.\]
\item Assume that, in addition to the conditions in {\rm (1)}, we have $i<j-1$ and $\langle\mathrm{wt}(b),\varepsilon_{i}\rangle>1$. Then we have
\[\mathrm{f}_{\varepsilon_j}\mathrm{f}_{\varepsilon_i-\varepsilon_j}(b)=\mathrm{f}_{\varepsilon_i-\varepsilon_{j-1}}\mathrm{f}_{\varepsilon_{j-1}}(b)\ne0\,.
\footnote{Observe we will not need analogues of assertions (2) and (3) for the $\mathrm{e}_\cdot$ operators in the sequel.}\]
\end{enumerate}
\end{theorem}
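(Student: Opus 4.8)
The plan is to prove all three parts by the same two-step method used for Theorem~\ref{Th_comm_op} and Lemmas~\ref{PropRed}, \ref{PropRede}: first reduce each identity, via conjugation by a Weyl group element, to a ``model'' identity living in a subsystem of small rank, and then verify the model identity directly. The reduction is justified by the definitions \eqref{defmodcr}, \eqref{defmodcr1} together with Remark~\ref{fv}: if $w\in W$ satisfies $w(\alpha_1)=\alpha$ and $w(\alpha_n)=\beta$ for a long root $\alpha$ and a short root $\beta$, and if $w$ is chosen so that Remark~\ref{fv} guarantees $w\tilde{f}_1w^{-1}=\mathrm{f}_{\alpha}$, then conjugating any relation among the operators $\tilde{f}_1,\tilde{f}_n$ and the reflections $s_1,\ldots,s_r$ by $w$ yields the corresponding relation among the modified operators; here I use crucially that, for short roots, the operator $\mathrm{f}_\beta=w\tilde{f}_nw^{-1}$ does not depend on the choice of $w$. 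Throughout, the non-vanishing assertions will follow from Lemma~\ref{fenonzero} once the weights of the relevant vertices are computed.

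For part (1), the roots $\varepsilon_i-\varepsilon_j$ and $\varepsilon_k$ with $i<j<k$ are orthogonal, and they lie in the $W$-orbits of $\alpha_1$ and $\alpha_n$, respectively. I would take the permutation $w$ with $w(1)=i$, $w(2)=j$, $w(n)=k$, so that $w(\alpha_1)=\varepsilon_i-\varepsilon_j$ and $w(\alpha_n)=\varepsilon_k$, and check (exactly as in Lemma~\ref{PropRed}) via Remark~\ref{fv} that $w\tilde{f}_1w^{-1}=\mathrm{f}_{\varepsilon_i-\varepsilon_j}$. The identity then reduces to the model statement $\tilde{f}_1\tilde{f}_n=\tilde{f}_n\tilde{f}_1$, which holds on all of $B(\lambda)$ because $\alpha_1$ and $\alpha_n$ are non-adjacent nodes of the $B_n$ Dynkin diagram, and Kashiwara operators attached to orthogonal simple roots commute as partial bijections. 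Moreover, since $\alpha_1$ and $\alpha_n$ are orthogonal, applying $\tilde{f}_n$ preserves $\varphi_1$, so $\tilde{f}_1(b)\neq0$ forces $\tilde{f}_1\tilde{f}_n(b)\neq0$, giving the non-vanishing. The $\mathrm{e}_\cdot$ version is identical.

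For part (2), the three roots $\varepsilon_{j-1}-\varepsilon_j$, $\varepsilon_j$, $\varepsilon_{j-1}$ all lie in the rank-two subsystem spanned by $\varepsilon_{j-1},\varepsilon_j$, so I would conjugate by the permutation $w$ with $w(1)=j-1$, $w(2)=j$ into the model type $B_2$ situation, where the identity reads $\mathrm{f}_{\varepsilon_2}\mathrm{f}_{\varepsilon_1-\varepsilon_2}(b)=\mathrm{f}_{\varepsilon_1}(b)$ for a vertex $b$ with $\langle\mathrm{wt}(b),\varepsilon_1\rangle=1$ and $\langle\mathrm{wt}(b),\varepsilon_2\rangle=0$. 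Here $\mathrm{f}_{\varepsilon_1-\varepsilon_2}=\tilde{f}_1$, $\mathrm{f}_{\varepsilon_2}=\tilde{f}_n$, and $\mathrm{f}_{\varepsilon_1}=s_1\tilde{f}_ns_1$. I would verify this directly by restricting $B(\lambda)$ to the type $B_2$ Levi, decomposing it into connected $B_2$-crystals, and computing both sides on the finitely many vertices of the prescribed weight using the explicit Kashiwara-Nakashima tableau model; the constraints $\langle\mathrm{wt}(b),\varepsilon_1\rangle=1$ and $\langle\mathrm{wt}(b),\varepsilon_2\rangle=0$ pin down the relevant vertices and make this a short finite check. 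The non-vanishing is immediate from Lemma~\ref{fenonzero}, since $\mathrm{wt}(\mathrm{f}_{\varepsilon_1-\varepsilon_2}(b))$ still pairs non-negatively with $\varepsilon_2$.

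Part (3) is the main obstacle, because after conjugation it genuinely reduces to a mixed long/short relation in rank three rather than in rank two. Using $i<j-1$, the indices $i,j-1,j$ are distinct, so I would conjugate by the permutation $w$ with $w(1)=i$, $w(2)=j-1$, $w(3)=j$ into the type $B_3$ model, where the identity becomes
\[\mathrm{f}_{\varepsilon_3}\mathrm{f}_{\varepsilon_1-\varepsilon_3}(b)=\mathrm{f}_{\varepsilon_1-\varepsilon_2}\mathrm{f}_{\varepsilon_2}(b)\]
under the hypothesis $\langle\mathrm{wt}(b),\varepsilon_1\rangle>1$. The cleanest route I see is a direct computation in the $B_3$ crystal: restricting to the $B_3$ Levi and verifying the identity on Kashiwara-Nakashima tableaux of the prescribed weight. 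This is the type-$B$ analogue of the explicit computations in Lemmas~\ref{f_1,3)}--\ref{lemma_f_(2)}, and I expect the difficulty to lie in the number of tableau configurations to be enumerated, since both long operators $\mathrm{f}_{\varepsilon_1-\varepsilon_3}$ and $\mathrm{f}_{\varepsilon_1-\varepsilon_2}$ are conjugates that mix in the short root. A tempting alternative is to derive the identity algebraically from parts (1) and (2) and Theorem~\ref{Th_comm_op}(1), by factoring the lowering by $\varepsilon_1$ through the roots $\varepsilon_1-\varepsilon_2$, $\varepsilon_2-\varepsilon_3$, and $\varepsilon_3$, using part (1) to commute the orthogonal pair $(\varepsilon_1-\varepsilon_2,\varepsilon_3)$ and Theorem~\ref{Th_comm_op}(1) to merge the long roots $\varepsilon_1-\varepsilon_2$ and $\varepsilon_2-\varepsilon_3$ into $\varepsilon_1-\varepsilon_3$; however, each such step imposes positivity conditions on the intermediate weights that are not guaranteed by the hypotheses alone, so making this route rigorous requires carefully exploiting $\langle\mathrm{wt}(b),\varepsilon_1\rangle>1$. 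In either route, the delicate point, and where I expect to spend the most effort, is tracking the weights of all intermediate vertices so that the invoked operators are non-zero and the hypotheses of the cited results hold; the non-vanishing of the final vertex follows from Lemma~\ref{fenonzero}, as in part (1).
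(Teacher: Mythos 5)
Your part (1) is fine and is essentially the paper's argument: $\mathrm{f}_{\varepsilon_i-\varepsilon_j}$ is a Weyl conjugate of $\tilde{f}_1$ and $\mathrm{f}_{\varepsilon_k}$ of $\tilde{f}_n$, a single $w$ can be chosen to effect both conjugations, and the relation reduces to the commutation of $\tilde{f}_1$ and $\tilde{f}_n$. The gap is in parts (2) and (3). For (2), the proposed ``short finite check in the $B_2$ crystal'' is not available: the $B_2$ subsystem spanned by $\varepsilon_{j-1}$ and $\varepsilon_j$ is \emph{not} a Levi subsystem of $B_n$ (its short roots are not simple unless $j=n$), so $B(\lambda)$ does not branch into crystals for it; and even after conjugating $\{j-1,j\}$ to $\{1,2\}$ one has $\mathrm{f}_{\varepsilon_2}=s_2\cdots s_{n-1}\tilde{f}_n s_{n-1}\cdots s_2$ and $\mathrm{f}_{\varepsilon_1}=s_1\cdots s_{n-1}\tilde{f}_n s_{n-1}\cdots s_1$, not $\tilde{f}_n$ and $s_1\tilde{f}_n s_1$ as you write --- those identifications hold only when $n=2$, and a rank-two verification does not transport to rank $n$. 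The idea you are missing is the paper's: the weight hypotheses give $\langle\mathrm{wt}(b),\alpha_1^\vee\rangle=1$, so the crystal Weyl group action satisfies $s_1(b)=\tilde{f}_1(b)=\mathrm{f}_{\varepsilon_1-\varepsilon_2}(b)$; substituting this into the expression for $\mathrm{f}_{\varepsilon_1}(b)$ above (and noting that the outer $s_1$ acts trivially on the result, whose weight pairs to $0$ with $\alpha_1^\vee$) turns the identity into $\mathrm{f}_{\varepsilon_2}(\mathrm{f}_{\varepsilon_1-\varepsilon_2}(b))=\mathrm{f}_{\varepsilon_1}(b)$, which is what was to be proved. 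No tableau enumeration is needed.

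For (3), your primary route (a direct $B_3$ tableau computation) inherits exactly the same defect, while your ``tempting alternative'' is in substance the paper's proof: one writes $\mathrm{f}_{\varepsilon_i-\varepsilon_j}(b)=\mathrm{f}_{\varepsilon_i-\varepsilon_{j-1}}\mathrm{f}_{\varepsilon_{j-1}-\varepsilon_j}(b)$ using Theorem~\ref{Th_comm_op}\,(1) (this is where $i<j-1$ and $\langle\mathrm{wt}(b),\varepsilon_i\rangle>1$ are used), commutes $\mathrm{f}_{\varepsilon_j}$ past the orthogonal $\mathrm{f}_{\varepsilon_i-\varepsilon_{j-1}}$ by part (1), and observes that what remains is precisely the identity of part (2); there is no need to further factor through $\varepsilon_2-\varepsilon_3$ and $\varepsilon_3$ as you suggest. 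You stop short of executing this, saying only that the intermediate weight conditions ``require care,'' and you complete neither route; so parts (2) and (3) remain unproved in your proposal.
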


\begin{proof} The first commutation in (1) is easy to check based on the fact that all Kashiwara crystal operators used in the definition of $\mathrm{f}_{\varepsilon_{i}-\varepsilon_j}$ commute with those corresponding to $\mathrm{f}_{\varepsilon_k}$. This is because the first operator is defined via conjugation based on $\tilde{f}_1$, whereas $\tilde{f}_n$ is used for the second. The situation is identical for the $\mathrm{e}_\cdot$ operators.

The fact that none of the expressions in (2) and (3) is $0$ holds by Lemma~\ref{fenonzero}~(1).

Let us now turn to (2). By the usual reduction procedure based on conjugation, which was applied several times in the proof of Lemma~\ref{PropRed}, the relation can be reduced to the special case corresponding to $j=2$. (In fact, the reduction procedure is simpler because the definition of the additional operators is independent of the corresponding elements of $W$.) Now observe that, since $s_i\ldots s_{n-1}(\alpha_n)=\alpha_i$, and due to the weight condition on $b$, we have 
\begin{align*}&\mathrm{f}_{\varepsilon_{2}}(b')=s_2\ldots s_{n-1}\tilde{f}_ns_{n-1}\ldots s_2(b')\,,\\&\mathrm{f}_{\varepsilon_{1}}(b)=s_1\ldots s_{n-1}\tilde{f}_ns_{n-1}\ldots s_{1}(b)=s_2\ldots s_{n-1}\tilde{f}_ns_{n-1}\ldots s_2\tilde{f}_{1}(b)\,,\end{align*}
where $b':=\mathrm{f}_{\varepsilon_1-\varepsilon_2}(b)$. Therefore, the relation to prove is reduced to $\tilde{f}_1(b)=\mathrm{f}_{\varepsilon_1-\varepsilon_2}(b)$, which is true by definition.

In order to prove (3), we plug the following identity (based on Lemma~\ref{actf12}) into the relation to prove: $\mathrm{f}_{\varepsilon_i-\varepsilon_j}(b)=\mathrm{f}_{\varepsilon_i-\varepsilon_{j-1}}\mathrm{f}_{\varepsilon_{j-1}-\varepsilon_j}(b)$. Using the commutation of $\mathrm{f}_{\varepsilon_j}$ and $\mathrm{f}_{\varepsilon_i-\varepsilon_{j-1}}$, which is true by~(1), the relation to prove becomes precisely the one in (2).
\end{proof}

\section{Proof of the atomic decomposition}\label{sec:proofatdec}

Fix a dominant weight $\lambda$ for a classical Lie algebra. Consider the subgraph of $\mathbb{B}%
(\lambda)$ consisting of the vertices of dominant weight, and the edges $b\overset{\alpha}{\dashrightarrow}\mathrm{f}_{\alpha}(b)$ for which $\mathrm{wt}(b)\gtrdot\mathrm{wt}(\mathrm{f}_{\alpha}(b))$ is a cocover in the dominant weight poset. This new colored directed graph on  the vertices of $B(\lambda)^+$ will be denoted by $\mathbb{B}(\lambda)^+$. It can also be viewed as a poset (with cocovers given by the above edges), and the weight function is a poset projection to the interval $[\widehat{0},\lambda]$ in the dominant weight poset. The two points of view will be used interchangeably.

The main goal is to identify situations in which the components of the poset $\mathbb{B}(\lambda)^+$ define an atomic, respectively $t$-atomic decomposition, cf. Definitions~\ref{crat} and~\ref{crtat}. 

\begin{remark}{\rm It is important to use the setup mentioned above, as we found that several other variations fail, as explained below.
\begin{itemize}
\item If we consider all vertices of $B(\lambda)$, rather than just those of dominant weight, then Lemma~\ref{actf13} fails, for instance, in type $A_3$, for $\lambda=(4,1,1)$.
\item In type $A_{n-1}$, we obtain the same results by defining the modified crystal operators based on ${\mathrm f}_{n-1}$ rather than ${\mathrm f}_{1}$ (see Section~\ref{sec:modcr}), due to the symmetry of the Dynkin diagram. However, if any other node of the Dynkin diagram is used, the connected components of the corresponding $\mathbb{B}(\lambda)$ do not satisfy the properties in Theorem~\ref{Th_tAtomic}, which are needed for the atomic decomposition. An example is $\lambda=(4,3)$ in type $A_3$. 
\item The same complication arises if we do not remove the modified crystal edges which do not correspond to cocovers in the dominant weight poset. An example is $\lambda=(3,1,1)$ in type $C_3$.
\end{itemize}
}
\end{remark}

Lemma~\ref{fenonzero}~(1) immediately gives the following result, which is a converse of the fact that every edge in $\mathbb{B}(\lambda)^+$ projects to a cocover in the dominant weight poset (by definition).

\begin{lemma}\label{edgeexist} Given a vertex $b$ in $\mathbb{B}(\lambda)^+$ of weight $\mu$ and a cocover $\mu\gtrdot\mu-\alpha$ in the dominant weight poset (with $\alpha\in R^+$), we have an edge $b\overset{\alpha}{\dashrightarrow}\mathrm{f}_{\alpha}(b)$ in $\mathbb{B}(\lambda)^+$.
\end{lemma}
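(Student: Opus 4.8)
The plan is to check directly the three conditions built into the definition of an edge of $\mathbb{B}(\lambda)^{+}$: that $\mathrm{f}_{\alpha}(b)\neq 0$, that $\mathrm{f}_{\alpha}(b)$ has dominant weight (so that it is a genuine vertex of $B(\lambda)^{+}$), and that $\mathrm{wt}(b)\gtrdot\mathrm{wt}(\mathrm{f}_{\alpha}(b))$ is a cocover in the dominant weight poset. Two of these will be immediate. Since $\mathrm{wt}(\mathrm{f}_{\alpha}(b))=\mathrm{wt}(b)-\alpha=\mu-\alpha$, and $\mu-\alpha$ is by hypothesis a cocover of $\mu$, it is in particular a dominant weight lying below $\mu$; hence both the dominance of the target weight and the cocover condition hold automatically. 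Thus the only thing requiring an argument is the nonvanishing $\mathrm{f}_{\alpha}(b)\neq 0$.

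For this, I would simply observe that $\mathrm{wt}(b)-\alpha=\mu-\alpha$ is dominant and $\alpha\in R^{+}$, so the ``in particular'' part of Lemma~\ref{fenonzero}~(1) applies verbatim and gives $\mathrm{f}_{\alpha}(b)\neq 0$. (Recall that this part is established by noting $\langle\mu,\alpha^{\vee}\rangle=\langle\mu-\alpha,\alpha^{\vee}\rangle+2\geq 2>0$, using that $\mu-\alpha$ is dominant and $\alpha^{\vee}$ is a nonnegative combination of simple coroots.) Combining this nonvanishing with the two observations of the previous paragraph, the definition of $\mathbb{B}(\lambda)^{+}$ immediately produces the claimed edge $b\overset{\alpha}{\dashrightarrow}\mathrm{f}_{\alpha}(b)$.

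The only point that must be checked for the statement to be meaningful at all is that the root $\alpha$ labelling the given cocover lies in the orbit $W\alpha_{1}$ (or, in type $B_{n}$, is a short root in $W\alpha_{n}$), so that $\mathrm{f}_{\alpha}$ is defined and Lemma~\ref{fenonzero} is available. In the stable ranges of Section~\ref{sec:pod} this is guaranteed by Propositions~\ref{stable-crit}, \ref{stable-crit-d}, and \ref{stable-critb}, which rule out cocovers labelled by the ``bad'' long roots such as $2\varepsilon_{n}$; in type $B_{n}$, the extension of the basic properties to the short-root operators (noted after \eqref{defmodcr1}) makes Lemma~\ref{fenonzero} applicable there too. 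I do not expect any genuine obstacle, since the entire substance of the statement is carried by Lemma~\ref{fenonzero}~(1), which is already proved above; the lemma is essentially a repackaging of that nonvanishing criterion together with the definition of $\mathbb{B}(\lambda)^{+}$.
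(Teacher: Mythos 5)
Your proposal is correct and matches the paper's argument, which likewise deduces the lemma immediately from the ``in particular'' clause of Lemma~\ref{fenonzero}~(1) applied to the dominant weight $\mu-\alpha$, combined with the definition of $\mathbb{B}(\lambda)^{+}$. Your additional remark that the stable-range propositions guarantee $\alpha$ lies in the relevant Weyl orbit (so that $\mathrm{f}_{\alpha}$ is defined) is a sensible point of care that the paper leaves implicit.
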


\subsection{Type $A_{n-1}$}\label{main-a}

We refer freely to Section~\ref{dwpa}. 

\begin{lemma}\label{lemdn} Consider two distinct edges $b{\dashrightarrow}b'$ and $b{\dashrightarrow}b''$ in $\mathbb{B}(\lambda)^+$. The vertices $b'$ and $b''$ have a lower bound in this poset. 
\end{lemma}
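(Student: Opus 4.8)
The plan is to reduce the statement to the classification of short intervals in the dominant weight poset recalled in Section~\ref{dwpa}, and then lift that combinatorics to $\mathbb{B}(\lambda)^+$ by means of the commutation relations of Theorem~\ref{Th_comm_op}. Write $\mu:=\mathrm{wt}(b)$, $\nu:=\mathrm{wt}(b')$, $\pi:=\mathrm{wt}(b'')$; then $\mu\gtrdot\nu$ and $\mu\gtrdot\pi$ are two distinct cocovers, labeled by positive roots $\alpha,\beta$ with $b'=\mathrm{f}_{\alpha}(b)$ and $b''=\mathrm{f}_{\beta}(b)$. Since the two edges leave $b$ downward, the relevant interval $[\nu\wedge\pi,\mu]$ is one of the unprimed Cases~A1, A2, A3 of Section~\ref{dwpa} (the primed Cases~A1$'$--A3$'$ describe $[\mu,\nu\vee\pi]$ and would only be needed for the dual statement about upper bounds). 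In each case the diagram prescribes a downward chain of cocovers from $\nu$ to $\nu\wedge\pi$ and another from $\pi$ to $\nu\wedge\pi$. By Lemma~\ref{edgeexist} every such cocover lifts to an edge of $\mathbb{B}(\lambda)^+$, so starting from $b'$ and from $b''$ we obtain two directed paths in $\mathbb{B}(\lambda)^+$ ending at vertices of weight $\nu\wedge\pi$; it remains to show these two endpoints coincide, and that common vertex is then the desired lower bound.

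To identify the endpoints I would write each path as a composite of modified crystal operators applied to $b$ and simplify it using Theorem~\ref{Th_comm_op}. For a nonoverlapping pair (Case~A1(a)) the roots $\alpha,\beta$ have disjoint support, hence lie in the $W$-orbit of $(\alpha_1,\alpha_3)$, and Theorem~\ref{Th_comm_op}~(2) gives $\mathrm{f}_{\beta}\mathrm{f}_{\alpha}(b)=\mathrm{f}_{\alpha}\mathrm{f}_{\beta}(b)$, which is exactly the common endpoint. For the overlapping cases the mechanism is instead the relation $\mathrm{f}_{\gamma}\mathrm{f}_{\delta}(b)=\mathrm{f}_{\gamma+\delta}(b)$ of Theorem~\ref{Th_comm_op}~(1)(i), applied repeatedly so that each path telescopes to a single operator. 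For example, in Case~A3, with $\alpha=\varepsilon_i-\varepsilon_j$ and $\beta=\varepsilon_j-\varepsilon_k$, the path from $b'$ reads $\mathrm{f}_{\varepsilon_{j+1}-\varepsilon_k}\mathrm{f}_{\varepsilon_j-\varepsilon_{j+1}}\mathrm{f}_{\varepsilon_i-\varepsilon_j}(b)$, which collapses in two steps to $\mathrm{f}_{\varepsilon_i-\varepsilon_k}(b)$, while the path from $b''$ reads $\mathrm{f}_{\varepsilon_i-\varepsilon_{j-1}}\mathrm{f}_{\varepsilon_{j-1}-\varepsilon_j}\mathrm{f}_{\varepsilon_j-\varepsilon_k}(b)$, which collapses to the same $\mathrm{f}_{\varepsilon_i-\varepsilon_k}(b)$. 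The remaining overlapping cases A1(b), A1(c), A2(a), A2(b) follow the identical template: both downward paths telescope to one operator $\mathrm{f}_{\varepsilon_i-\varepsilon_m}(b)$, where $i$ and $m$ are the extreme positions involved in the interval.

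At each telescoping step one must verify the hypotheses $\langle\mathrm{wt}(\cdot),\gamma\rangle>0$ and $\langle\mathrm{wt}(\cdot),\delta\rangle>0$ of Theorem~\ref{Th_comm_op}~(1); these follow from the explicit partition entries in the diagrams of Section~\ref{dwpa}, since consecutive relations such as $a\ge c+2$ or $c=b-1$ force the required strict inequalities. The nonvanishing of all vertices along the two paths is automatic, both from Lemma~\ref{fenonzero}~(1)---every intermediate weight lies in the interval $[\nu\wedge\pi,\mu]$ and is therefore dominant---and from the nonvanishing assertions built into Theorem~\ref{Th_comm_op}. I expect the main obstacle to be organizational rather than conceptual: there are several subcases, each with its own pair of paths and its own shifting root labels, and one must track carefully which root labels the successive cocovers carry so that the telescopings in the two paths match. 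Accordingly I would present Case~A3 in full detail and then indicate the analogous collapse for each of the other cases.
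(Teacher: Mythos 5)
Your proposal is correct and follows essentially the same route as the paper: reduce to the interval structures of Cases A1--A3, lift the cocover labels to edges of $\mathbb{B}(\lambda)^+$ via Lemma~\ref{edgeexist}, and verify that the two lifted paths meet by repeated use of Theorem~\ref{Th_comm_op}. Your ``telescoping'' of each path to a single operator $\mathrm{f}_{\varepsilon_i-\varepsilon_m}$ is exactly the computation the paper carries out on the triangles, diamonds, and pentagons (cf.\ its Case~A2(a) calculation), with the same hypothesis checks read off from the partition entries.
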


\begin{proof} Let $\mu:=\mathrm{wt}(b)$, $\nu:=\mathrm{wt}(b')$, and $\pi:=\mathrm{wt}(b'')$. We have cocovers $\mu\gtrdot\nu$ and $\mu\gtrdot\pi$, so the interval $[\nu\wedge\pi,\mu]$ has one of the structures in Cases A$1-$A$3$. Starting from the crystal vertex $b$, we can apply the $\mathrm{f}_\cdot$ operators indexed by the labels in the corresponding diagrams, by Lemma~\ref{edgeexist}. It suffices to check that these diagrams commute, which follows by using Theorem~\ref{Th_comm_op}~(1)-(2) repeatedly. In fact, we apply this theorem to the corresponding triangles and diamonds, after we verify its hypotheses by inspecting the diagrams. In the case of the pentagons, for instance in Case~A2~(a), we let $\mathrm{f}_{ij}:=\mathrm{f}_{\alpha_{ij}}$, and calculate:
\[\mathrm{f}_{i+2,j}\mathrm{f}_{i+1,i+2}(\mathrm{f}_{i,i+1}(b))=\mathrm{f}_{i+1,j}\mathrm{f}_{i,i+1}(b)=\mathrm{f}_{i,i+1}\mathrm{f}_{i+1,j}(b)\,;\]
indeed, each of the two equalities follows from Theorem~\ref{Th_comm_op}~(1).
\end{proof}

\begin{lemma}\label{lemup} Consider two distinct edges $b'{\dashrightarrow}b$ and $b''{\dashrightarrow}b$ in $\mathbb{B}(\lambda)^+$. The vertices $b'$ and $b''$ have an upper bound in this poset.
\end{lemma}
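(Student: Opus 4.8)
The plan is to dualize the proof of Lemma~\ref{lemdn}: instead of descending from $b$ with the operators $\mathrm{f}_{\cdot}$, I climb from $b$ with the operators $\mathrm{e}_{\cdot}$, replacing Theorem~\ref{Th_comm_op} by its counterpart Theorem~\ref{Th_comm_ope}. Set $\mu:=\mathrm{wt}(b)$, $\nu:=\mathrm{wt}(b')$, $\pi:=\mathrm{wt}(b'')$, so that $\mu\lessdot\nu$ and $\mu\lessdot\pi$ are covers in the dominant weight poset and $b'=\mathrm{e}_{\alpha}(b)\ne0$, $b''=\mathrm{e}_{\beta}(b)\ne0$ with $\alpha=\nu-\mu$ and $\beta=\pi-\mu$. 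Since the components of the dominant weight poset are lattices and $\nu,\pi\le\lambda$, the join $\nu\vee\pi$ exists and is $\le\lambda$; by the conjugation antiautomorphism, $[\mu,\nu\vee\pi]$ has one of the structures of Cases~A$1'$--A$3'$. It suffices to exhibit a vertex $c$ of weight $\nu\vee\pi$ together with upward paths from $b'$ and from $b''$ to $c$ inside $\mathbb{B}(\lambda)^+$; such a $c$ is the required upper bound.

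In Cases~A$1'$ and A$2'$ the interval is, respectively, the diamond of Case~A1 and the pentagon of Case~A2, and the argument is the direct dual of Lemma~\ref{lemdn}: I apply the operators $\mathrm{e}_{\cdot}$ along the edges of the (upside-down) diagrams and invoke Theorem~\ref{Th_comm_ope}~(1)--(2) on the constituent triangles, diamonds and pentagons, verifying the hypotheses by inspection. The non-vanishing needed to start and propagate the computation is furnished by $\mathrm{e}_{\alpha}(b)\ne0$, $\mathrm{e}_{\beta}(b)\ne0$ and by the non-vanishing conclusions built into Theorem~\ref{Th_comm_ope}. Since every vertex of the interval is a dominant weight, all vertices produced lie in $\mathbb{B}(\lambda)^+$, and the two chains meet at the top vertex $c$ of weight $\nu\vee\pi$.

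The main obstacle is Case~A$3'$, the only case whose structure genuinely differs from its unprimed analogue. Here the two given covers are the non-adjacent, commuting roots $\alpha=\alpha_i$ and $\beta=\alpha_{i+2}$, yet $\nu\vee\pi$ has weight $\mu+\alpha_i+\alpha_{i+1}+\alpha_{i+2}$, strictly above the naive join $\mu+\alpha_i+\alpha_{i+2}$. In the notation of the A$3'$ diagram the latter weight equals $(a+1)dc(f-1)$, which fails to be dominant because $d=c-1<c$; hence the plain commutation of $\mathrm{e}_{\alpha_i}$ and $\mathrm{e}_{\alpha_{i+2}}$ does not land in $\mathbb{B}(\lambda)^+$, and a direct dual of Lemma~\ref{lemdn} is unavailable. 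The plan is to detour through a non-dominant vertex. Since $\langle\mu,\alpha_{i+1}\rangle=c-d=1>0$, Theorem~\ref{Th_comm_ope}~(2) applies and produces the nonzero vertex $c_0:=\mathrm{e}_{\alpha_{i+2}}(b')=\mathrm{e}_{\alpha_i}(b'')$ of weight $(a+1)dc(f-1)$; and because $\langle\mathrm{wt}(c_0),\alpha_{i+1}\rangle=d-c=-1<0$, Lemma~\ref{fenonzero}~(2) yields a nonzero $c:=\mathrm{e}_{\alpha_{i+1}}(c_0)$, a dominant vertex of weight $\nu\vee\pi$.

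It remains to connect $b'$ and $b''$ to $c$ by \emph{dominant} paths, and for this I would run the $\mathrm{f}$-version Theorem~\ref{Th_comm_op}~(1) downward from $c$. The pairings $\langle\mathrm{wt}(c),\alpha_i\rangle=a+1-c\ge2$, $\langle\mathrm{wt}(c),\alpha_{i+1}\rangle=c-d=1$ and $\langle\mathrm{wt}(c),\alpha_{i+2}\rangle=d-f+1\ge2$ are all strictly positive, so Theorem~\ref{Th_comm_op}~(1) applies at $c$ to each adjacent pair. Applied to $(\alpha_{i+1},\alpha_{i+2})$ it gives $\mathrm{f}_{\alpha_{i+1}}\mathrm{f}_{\alpha_{i+2}}(c)=\mathrm{f}_{\alpha_{i+2}}\mathrm{f}_{\alpha_{i+1}}(c)$, whose right-hand side equals $\mathrm{f}_{\alpha_{i+2}}(c_0)=b'$; hence $Y_1:=\mathrm{f}_{\alpha_{i+2}}(c)$ is a nonzero dominant vertex, of weight $(a+1)c(d-1)f$, with $\mathrm{f}_{\alpha_{i+1}}(Y_1)=b'$, i.e.\ $\mathrm{e}_{\alpha_{i+1}}(b')=Y_1$ and $\mathrm{e}_{\alpha_{i+2}}(Y_1)=c$, giving the dominant path $b'\to Y_1\to c$. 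The symmetric application to $(\alpha_i,\alpha_{i+1})$ produces a dominant $Y_2:=\mathrm{f}_{\alpha_i}(c)$, of weight $a(c+1)d(f-1)$, with the dominant path $b''\to Y_2\to c$. Thus $c$ is the desired upper bound. The reason this case is harder than Lemma~\ref{lemdn} is exactly that, climbing with the operators $\mathrm{e}_{\cdot}$, there is no analogue of Lemma~\ref{edgeexist} guaranteeing the cover edges, so the existence of the intermediate dominant vertices $Y_1,Y_2$ must be forced indirectly, by overshooting into a non-dominant vertex and then descending with the operators $\mathrm{f}_{\cdot}$.
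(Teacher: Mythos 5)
Your handling of Case~A$3'$ is essentially the paper's argument (produce the non-dominant vertex $\mathrm{e}_{\alpha}\mathrm{e}_{\beta}(b)$ via Theorem~\ref{Th_comm_ope}~(2), climb one more step with Lemma~\ref{fenonzero}~(2) to reach the join, then descend with Theorem~\ref{Th_comm_op}~(1) to exhibit the dominant paths). The gap is in your first step: the claim that all of Cases~A$1'$ and A$2'$ are ``direct duals'' of Lemma~\ref{lemdn} fails for the subcases A$1'$~(c), A$2'$~(a), and A$2'$~(b), which need exactly the detour you reserve for A$3'$. Although the interval $[\mu,\nu\vee\pi]$ has the same shape as in the unprimed case, the pair of roots labelling the two covers of $\mu$ is not the pair labelling the two cocovers of the top, and this is what the lifting argument depends on. Concretely, in A$1'$~(c) the covers of $\mu=\ldots b^{p+1}c^{q+1}\ldots$ are labelled $(j,k)$ and $(i,j-1)$: these roots are orthogonal, so the only available commutation is Theorem~\ref{Th_comm_ope}~(2), and it lands on the vertex of weight $\mu+\alpha_{jk}+\alpha_{i,j-1}=\ldots ab^{p-1}c\,b\,c^{q-1}d\ldots$, which is \emph{not} dominant and is not the join $\mu+\alpha_{ik}$. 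If instead you try to climb the diamond edge by edge, you need $\mathrm{e}_{(i,j)}(b')\neq 0$; but $\langle\mathrm{wt}(b'),\alpha_{ij}\rangle=b-b=0$, so Lemma~\ref{fenonzero}~(2) gives nothing, and Theorem~\ref{Th_comm_ope} takes this non-vanishing as a \emph{hypothesis} rather than delivering it. So ``verifying the hypotheses by inspection'' breaks down rather than succeeding. The same happens in A$2'$~(a), where the covers $(i+2,j)$ and $(i,i+1)$ are orthogonal and $\mu+\alpha+\beta=\ldots a\,d\,c\,d^{p-1}e\ldots$ is non-dominant, and in A$2'$~(b).

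The repair is the technique you already deploy for A$3'$, and it is precisely what the paper does with its ``modified diagrams'': in each of A$1'$~(c), A$2'$~(a), A$2'$~(b), apply Theorem~\ref{Th_comm_ope}~(2) to the two given covers (the extra condition $\langle\mathrm{wt}(b),\gamma\rangle>0$ holds in each case) to get the non-dominant vertex $m$, apply one further $\mathrm{e}_{\cdot}$ operator to $m$ --- legal by Lemma~\ref{fenonzero}~(2) since the relevant pairing is negative at $m$ --- to reach a vertex $h$ of weight $\nu\vee\pi$, and then descend from $h$ with $\mathrm{f}_{\cdot}$ operators, using Theorem~\ref{Th_comm_op}~(1) on the resulting triangles to identify the images with $b'$ and $b''$ along genuine cocover edges of $\mathbb{B}(\lambda)^+$. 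Only A$1'$~(a) and A$1'$~(b) are direct consequences of Theorem~\ref{Th_comm_ope}, as in your first paragraph.
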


\begin{proof} The notation and related conditions are the same as in Section~\ref{dwpa}. Like in the proof of Lemma~\ref{lemdn}, the goal is to lift the diagrams in Cases A$1'-$A$3'$ from the dominant weight poset to $\mathbb{B}(\lambda)^+$. In fact, we will perform the lift along the edges of slightly modified diagrams (by starting from the bottom) in the cases indicated below, while we use the same diagrams as in Section~\ref{dwpa} in Cases A$1'$ (a) and A$1'$ (b).

{\bf Case A1$\mathbf{'}$ (c).}
\[\scriptstyle
{\xymatrix{
&{\ldots ab^pc^qd\ldots}\ar[ddl]_{(i,j)}\ar[ddr]^{(j-1,k)}\ar[d]_{(j-1,j)}\\
&{\;\;\;\;\;\;\;\ldots ab^{p-1}cbc^{q-1}d\ldots\;\;\;\;\;\;\;}\ar[dl]^{(i,j-1)}\ar[dr]_{(j,k)}\\
{\;\;\;\;\;\;\;\ldots b^{p+2}c^{q-1}d\ldots\;\;\;\;\;\;\;}\ar[dr]_{(j,k)}\ar@<0.2mm>[dr]\ar@<0.4mm>[dr]&&{\;\;\;\;\;\;\;\ldots ab^{p-1}c^{q+2}\ldots\;\;\;\;\;\;\;}\ar[dl]^{(i,j-1)}\ar@<0.2mm>[dl]\ar@<0.4mm>[dl]\\
&{\ldots b^{p+1}c^{q+1}\ldots}
} }
\]

{\bf Case A2$\mathbf{'}$ (a).}
\[\scriptstyle
{\xymatrix{
&{\ldots acd^pe\ldots}\ar[dl]_{(i,i+1)}\ar[d]_{(i+1,i+2)}\ar[ddr]^{(i+1,j)}\\
{\ldots(a-1)(c+1)d^pe\ldots}\ar[d]_{(i+1,i+2)}&{\ldots adcd^{p-1}e\ldots}\ar[dl]^{(i,i+1)}\ar[dr]_{(i+2,j)}\\
{\ldots(a-1)c^2d^{p-1}e\ldots}\ar[dr]_{(i+2,j)}\ar@<0.2mm>[dr]\ar@<0.4mm>[dr]&&{\ldots ad^{p+2}\ldots}\ar[dl]^{(i,i+1)}\ar@<0.2mm>[dl]\ar@<0.4mm>[dl]\\
&{\ldots(a-1)cd^{p+1}\ldots}} }
\]

{\bf Case A2$\mathbf{'}$ (b)} is similar to Case A$2'$ (a).

{\bf Case A3$\mathbf{'}$.}
\[\scriptstyle
{\xymatrix{
&{\ldots (a+1)cd(f-1)\ldots } \ar[d]_{(i+1,i+2)} \ar[dl]_{(i+2,i+3)} \ar[dr]^{(i,i+1)} \\
{\ldots (a+1)c(d-1)f \ldots } \ar[d]_{(i+1,i+2)}& {\ldots (a+1)dc(f-1)\ldots}\ar[dl]^{(i+2,i+3)}\ar[dr]_{(i,i+1)}&{\ldots a(c+1)d(f-1)\ldots} \ar[d]^{(i+1,i+2)}  \\
{\ldots (a+1)d^2f\ldots }\ar[dr]_{(i,i+1)}\ar@<0.2mm>[dr]\ar@<0.4mm>[dr] & & {\ldots ac^2(f-1)\ldots}\ar[dl]^{(i+2,i+3)}\ar@<0.2mm>[dl]\ar@<0.4mm>[dl]\\
&{\ldots acdf\ldots }
} }
\]

In Cases A$1'$ (a) and A$1'$ (b), the lemma is a direct consequence of Theorem~\ref{Th_comm_ope}~(2) and~(1), respectively. Note that, in the first case, the extra condition in Theorem~\ref{Th_comm_ope}~(2) on the dominant weight $\ldots a\ldots c\ldots e\ldots g\ldots$ at the bottom of the diagram amounts to $c>e$, where the two covers of the mentioned weight are $\ldots a+1\ldots c-1\ldots e\ldots g\ldots$ and $\ldots a\ldots c\ldots e+1\ldots g-1\ldots$; this condition immediately follows from the fact that the covers are dominant weights themselves.  

Thus, it suffices to focus on the three diagrams above. Their distinctive feature is that the weight in the middle is not dominant, but a single pair of consecutive entries is in increasing order. In each of the three cases, we start by applying Theorem~\ref{Th_comm_ope}~(2) to the diamond at the bottom (the extra condition in the theorem is part of the assumptions corresponding to the mentioned cases). Then we can apply the corresponding $\mathrm{e}_\cdot$ operator to the determined vertex of nondominant weight, by Lemma~\ref{fenonzero}~(2). Finally, starting from the determined top vertex, we can apply the corresponding $\mathrm{f}_\cdot$ operators along all the remaining edges of the modified diagrams, by Lemma~\ref{edgeexist}. The commutativity of the remaining triangles and diamonds is checked by Theorem~\ref{Th_comm_op}~(1); indeed, in each case we verify the hypothesis by inspecting the corresponding diagram. This concludes the proof.
\end{proof}

\begin{theorem}\label{Th_tAtomic}The components of $\mathbb{B}(\lambda)^+$ define a $t$-atomic decomposition. Moreover, these components are isomorphic to intervals of the form $[\widehat{0},\mu]$ in the dominant weight poset via the weight projection, and the distinguished vertex $h\in H(\lambda)$ in each of them is chosen to be the corresponding maximum. 
\end{theorem}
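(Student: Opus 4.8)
The plan is to deduce the theorem from a single structural statement: that the weight map $\mathrm{wt}$ restricts, on each connected component $\mathbb{B}$ of the poset $\mathbb{B}(\lambda)^+$, to a poset isomorphism onto an interval $[\widehat{0},\mu]$ of the dominant weight poset, where $\mu=\mathrm{wt}(h)$ for the unique maximal vertex $h$ of $\mathbb{B}$. Granting this, the atomic decomposition in the sense of Definition~\ref{crat} is immediate: the components partition $B(\lambda)^+$, each contains exactly one vertex of weight $\nu$ for every $\nu\le\mu$, and taking $H(\lambda)$ to be the set of maximal vertices, $A_{\lambda,\mu}$ counts the components with top weight $\mu$. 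So I first reduce everything to the isomorphism claim, then prove it in three stages.

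First I would produce the unique maximal vertex. Lemma~\ref{lemup} is a local confluence (diamond) property for the covers of $\mathbb{B}(\lambda)^+$: any two edges into a common vertex have sources admitting an upper bound. Since $\mathbb{B}(\lambda)^+$ is finite and the weight strictly increases along upward edges, a standard confluence argument upgrades this to a global statement, so every vertex lies below a unique maximal vertex, and all vertices of one component share it; call it $h$, and set $\mu:=\mathrm{wt}(h)$. Dually, Lemma~\ref{lemdn} gives a unique minimal vertex, necessarily of weight $\widehat{0}$ by Lemma~\ref{edgeexist}. In particular every vertex of $\mathbb{B}$ is reached from $h$ by a descending path, so $\mathrm{wt}(\mathbb{B})\subseteq[\widehat{0},\mu]$.

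The heart of the argument, and the step I expect to be the main obstacle, is showing that $\mathrm{wt}|_{\mathbb{B}}$ is a \emph{bijection} onto $[\widehat{0},\mu]$. Existence of a vertex of each weight $\nu\le\mu$ follows by lifting any saturated chain from $\mu$ down to $\nu$ via Lemma~\ref{edgeexist}. The difficulty is injectivity, which I would establish by downward induction on $\langle\mu-\nu,\rho^\vee\rangle$. If $b_1,b_2$ both have weight $\nu<\mu$, each receives a descending edge from a vertex $c_i$ of some covering weight $\nu+\gamma_i$, unique by the inductive hypothesis. When $c_1=c_2$, the two edges must coincide, since distinct descending edges from one vertex carry distinct roots and hence distinct target weights, forcing $b_1=b_2$. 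When $c_1\ne c_2$, I would appeal to the classification of cover intervals in Cases~A$1'$--A$3'$ of Section~\ref{dwpa} together with Lemma~\ref{lemup}: the latter lifts the corresponding commuting diagram for the two covers $\nu\lessdot\nu+\gamma_i$, producing, from the (inductively unique) vertex of weight $(\nu+\gamma_1)\vee(\nu+\gamma_2)$, a single vertex of weight $\nu$ reached through both $c_1$ and $c_2$; uniqueness of the labeled edges then identifies it with both $b_1$ and $b_2$. This confluence of descending paths, governed precisely by the short-interval structure that Lemma~\ref{lemup} was built to lift, is the crux, and it is exactly where the type~$A$ classification is indispensable. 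The bijection is order-preserving in both directions (covers correspond to cocovers by definition and by Lemma~\ref{edgeexist}), hence a poset isomorphism $\mathbb{B}\cong[\widehat{0},\mu]$.

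Finally, to upgrade to a $t$-atomic decomposition I would let $\mathrm{c}(\cdot)$ be the Lascoux--Sch\"{u}tzenberger charge. By Proposition~\ref{Prop_Modop_charge} the charge increases by $|\alpha|=\langle\alpha,\rho^\vee\rangle$ along each descending edge labeled $\alpha$, so within the component with maximum $h$ the vertex of weight $\nu$ has charge $\mathrm{c}(h)+\langle\mu-\nu,\rho^\vee\rangle$, matching the extension \eqref{propac}. Summing Theorem~\ref{TH_LScharge} over components and grouping the tableaux of weight $\nu$ by their component yields
\[
K_{\lambda,\nu}(t)=\sum_{\nu\le\mu\le\lambda}t^{\langle\mu-\nu,\rho^\vee\rangle}\Bigl(\sum_{\substack{h\in H(\lambda)\\ \mathrm{wt}(h)=\mu}}t^{\mathrm{c}(h)}\Bigr)\,,
\]
which is exactly \eqref{lasc0} with $A_{\lambda,\mu}(t)=\sum_{h\in H(\lambda),\,\mathrm{wt}(h)=\mu}t^{\mathrm{c}(h)}$. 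By Proposition~\ref{defateq} these are the atomic polynomials, so \eqref{combat} and hence \eqref{atdec} hold, completing the $t$-atomic decomposition with $h$ as distinguished vertex and $\mu=\mathrm{wt}(h)$.
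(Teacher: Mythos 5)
Your overall strategy --- identifying each component with an interval $[\widehat{0},\mu]$ via the weight map, extracting the unique maximal and minimal vertices from Lemmas~\ref{lemup} and~\ref{lemdn} by a confluence argument, and then feeding the Lascoux--Sch\"utzenberger charge through Proposition~\ref{Prop_Modop_charge} --- is the paper's strategy, and those parts are sound. The gap is in your injectivity step, in the subcase $c_1\neq c_2$. Lemma~\ref{lemup} cannot be invoked there: its hypothesis is two distinct edges \emph{into a common vertex}, which is essentially the conclusion you are after (that $b_1=b_2$). What you actually need is a \emph{top-down} statement: that the two lifted saturated chains from the unique vertex $d$ of weight $(\nu+\gamma_1)\vee(\nu+\gamma_2)$, passing through $c_1$ and $c_2$ respectively, terminate at the same vertex. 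For intervals of Cases~A$1'$ and A$2'$ this does follow from the computations in the proof of Lemma~\ref{lemdn}, since their Hasse diagrams coincide with those of Cases~A1 and A2; but for Case~A$3'$ neither lemma helps: the proof of Lemma~\ref{lemup} lifts that hexagon \emph{bottom-up}, starting from a vertex that already has two incoming edges and applying $\mathrm{e}_\cdot$ operators, so it presupposes the common target. A top-down lift of the Case~A$3'$ diagram (through the non-dominant middle weight, re-verifying the hypotheses of Theorem~\ref{Th_comm_op} at each stage) can in fact be carried out, but you would have to supply it; as written, the step is circular.

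The paper sidesteps all of this with a much shorter injectivity argument: given $b,b'$ of the same weight $\nu$ in one component, apply to both the $\mathrm{f}_\cdot$ operators labelled by one fixed saturated chain from $\nu$ down to $\widehat{0}$ (possible by Lemma~\ref{edgeexist}); both resulting paths end at the unique minimal vertex $b_{\min}$, and since $\mathrm{f}_\alpha(x)=y$ if and only if $\mathrm{e}_\alpha(y)=x$, reversing the same sequence of labels from $b_{\min}$ determines each preceding vertex uniquely, whence $b=b'$. Replacing your induction by this observation closes the gap and shortens the proof considerably.
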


\begin{proof} By Lemma~\ref{edgeexist}, the weight projection of a lower order ideal determined by a vertex of weight $\mu$ is the interval $[\widehat{0}, \mu]$. 

Now fix a weakly connected component $C$ of $\mathbb{B}(\lambda)^+$. As we saw, all of its minimal vertices have weight $\widehat{0}$. We will first prove the uniqueness of a minimal vertex. Assuming the contrary, let $\overline{C}$ be the subposet of $C$ consisting of vertices connected via directed paths to more than one minimal vertex. Find an undirected path in $C$ connecting two distinct minimal vertices, and consider its local maxima and minima which are not endpoints. As noted above, there are directed paths from the local minima to minimal vertices. By considering these paths, we can see that some local maximum must be in $\overline{C}$, so $\overline{C}\ne\emptyset$; see Figure~\ref{fig1}. Fixing a minimal vertex $b$ in $\overline{C}$, we must have distinct edges  $b{\dashrightarrow}b'$ and $b{\dashrightarrow}b''$. 
By Lemma~\ref{lemdn}, $b'$ and $b''$ have a lower bound $\overline{b}$ in $C$. By considering a directed path from $\overline{b}$ to a minimal vertex, we can see that $b'$ or $b''$ are in $\overline{C}$, which contradicts the minimality of $b$; see Figure~\ref{fig2}. Therefore, $C$ has a minimum $b_{\min}$. The existence of a maximum $b_{\max}$ is proved in a completely similar way, by using Lemma~\ref{lemup} instead. 
\begin{figure}
\begin{minipage}[b]{0.44\textwidth}   
    \includegraphics[width=\textwidth]{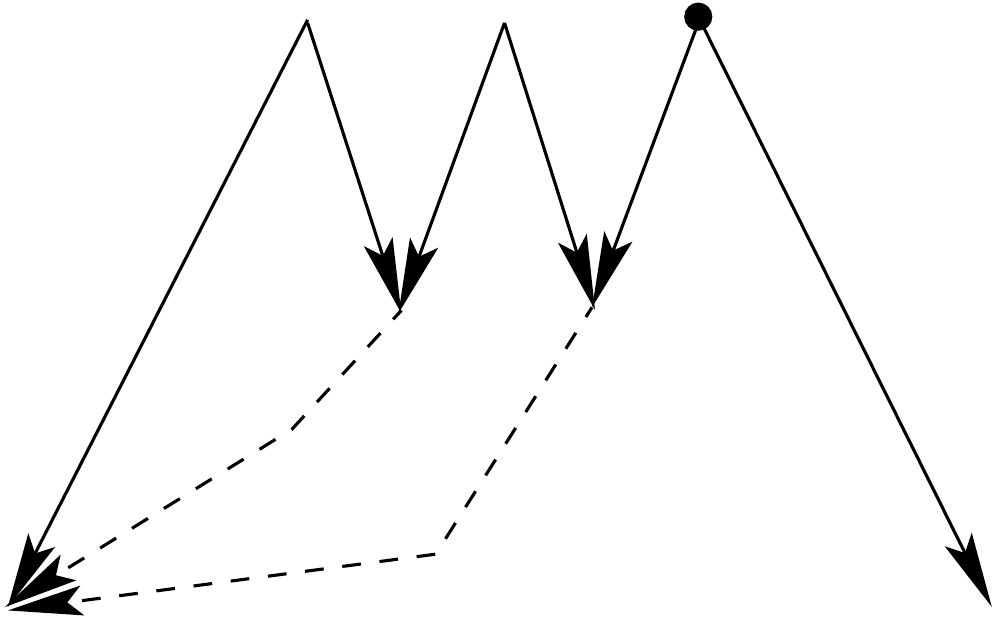}
		\caption{Proof of Theorem~\ref{Th_tAtomic}: $\overline{C}\ne\emptyset$.}\label{fig1}
\end{minipage}
\quad\quad\begin{minipage}[b]{0.44\textwidth}
    \includegraphics[width=\textwidth]{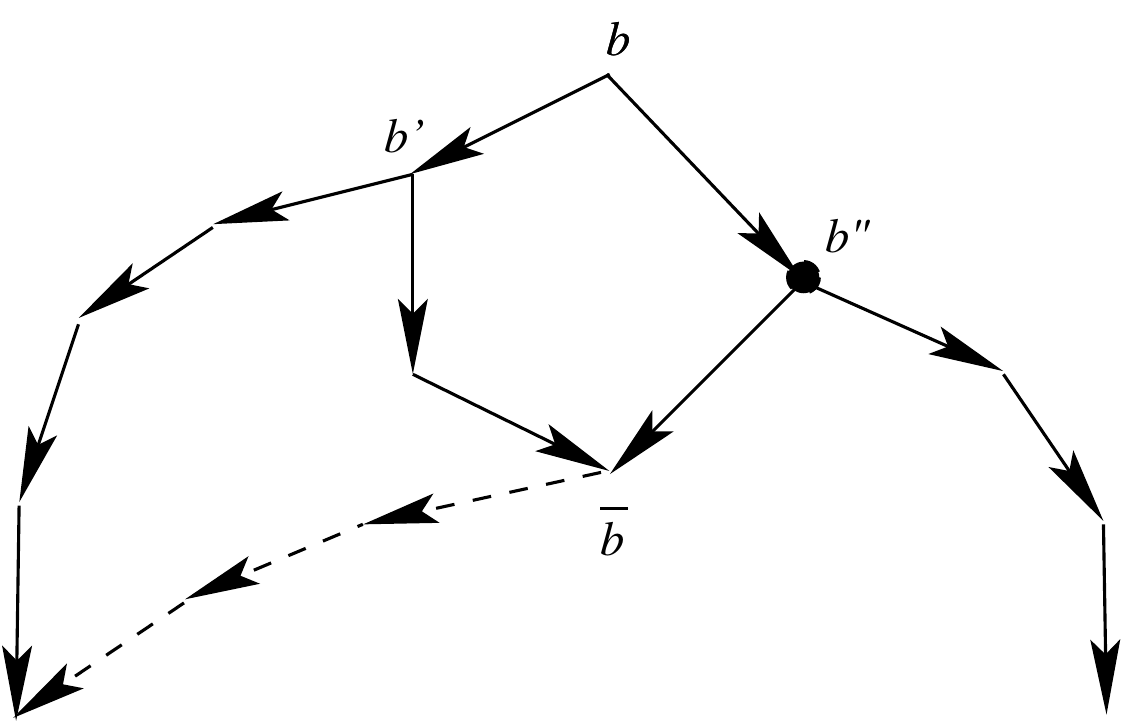}
		\caption{Proof of Theorem~\ref{Th_tAtomic}: $C$ has a minimum.}\label{fig2}
\end{minipage}
\end{figure}

We then need to show that there are no two vertices of the same weight in a component. Assume for contradiction that $b$ and $b'$ are such vertices, with $\mathrm{wt}(b)=\mathrm{wt}(b')=\mu$. Then we can find a saturated decreasing chain from $\mu$ to $\widehat{0}$ in the dominant weight poset. According to Lemma~\ref{edgeexist}, by applying to $b$ and $b'$ the $\mathrm{f}_{\cdot}$ operators corresponding to the labels on the mentioned chain, we obtain directed paths from these vertices to the minimum $b_{\min}$ in the considered component. However, using the reverse sequence of labels and starting from $b_{\min}$, it is clearly impossible to reach two different vertices via the $\mathrm{e}_{\cdot}$ operators.

It is now clear that the components of $\mathbb{B}(\lambda)^+$ define an atomic decomposition, where in each component we choose its maximum as the distinguished vertex $h\in H(\lambda)$.

To get the $t$-atomic decomposition, we first need, according to Definition~\ref{defat}, a
statistic on $H(\lambda)$. One can use the realization of $B(\lambda)$ in
terms of semistandard tableaux. Obviously, the natural candidate for the statistic $\mathrm{c}(\cdot)$ in Definition~\ref{crtat} is the Lascoux-Sch\"utzenberger charge~\cite{LSc1}. For any dominant weight $\mu$ (i.e. for any
partition with at most $n$ parts), we set%
\[
A_{\lambda,\mu}=\sum_{\substack{h\in H(\lambda) \\ \mathrm{wt}(h)=\mu}}t^{\mathrm{c}(h)}\,.
\]
We then have for any dominant weight $\nu$%
\[
K_{\lambda,\nu}(t)=\sum_{T\in B(\lambda)_{\nu}}t^{\mathrm{c}(T)}=\sum_{\nu
\leq\mu\leq\lambda}\sum_{\substack{h\in H(\lambda)\\ \mathrm{wt}(h)=\mu}%
}\sum_{T\in B(\lambda)_{\nu}\cap\mathbb{B}(\lambda,h)}t^{\mathrm{c}(T)}\,.
\]
Now by Proposition \ref{Prop_Modop_charge}, we obtain the equality
$\mathrm{c}(T)=\mathrm{c}(h)+\langle\mu-\nu,\rho^{\vee}\rangle$ for any $T\in
B(\lambda)_{\nu}\cap\mathbb{B}(\lambda,h)$.$\ $Indeed we have $\mathrm{wt}%
(h)-\mathrm{wt}(T)=\mu-\nu$ and $T$ can be obtained from $h$ by applying
modified crystal operators $\mathrm{f}_{\alpha},$ each of them increasing the
charge by $\langle\alpha,\rho^{\vee}\rangle$.\ Also, the set $B(\lambda)_{\nu
}\cap\mathbb{B}(\lambda,h)$ is reduced to a singleton because the connected
component $\mathbb{B}(\lambda,h)$ of $\mathbb{B}(\lambda)^{+}$ contains
exactly one vertex of weight $\nu\leq\lambda$. Thus we can write%
\[
K_{\lambda,\nu}(t)=\sum_{\nu\leq\mu\leq\lambda}t^{\langle\mu-\nu,\rho^{\vee
}\rangle}\sum_{\substack{h\in H(\lambda)\\ \mathrm{wt}(h)=\mu}}t^{\mathrm{c}%
(h)}=\sum_{\nu\leq\mu\leq\lambda}t^{\langle\mu-\nu,\rho^{\vee}\rangle
}A_{\lambda,\mu}(t)\,,
\]
which is equivalent to the $t$-atomic decomposition by Proposition~\ref{defateq}.
\end{proof}

\begin{example}\label{exa3} {\rm Consider $\lambda=(3,2,1)$ in type $A_3$. The modified crystal graph $\mathbb{B}(\lambda)^+$ is shown in Figure~\ref{fig3}. Its vertices are labeled by semistandard Young tableaux whose content is a partition, and its edges are labeled as above. In particular, this graph gives the following atomic decomposition of the character:
\[\chi_\lambda=w_{(3,2,1)}+w_{(2,2,2)}+w_{(3,1,1,1)}+w_{(2,2,1,1)}\,.\]}
\begin{figure}
\[\scriptstyle
 {\xymatrix{
&{\tableau{1&1&1\\2&2\\3}}\ar[dl]_{(1,3)}\ar[dr]^{(2,4)}
&&{\tableau{1&1&3\\2&2\\3}}\ar[d]_{(3,4)}
&{\tableau{1&1&1\\2&4\\3}}\ar[d]_{(1,2)}
&{\tableau{1&1&4\\2&2\\3}}
\\
{\tableau{1&1&2\\2&3\\3}}\ar[dr]_{(3,4)}&&{\tableau{1&1&1\\2&3\\4}}\ar[dl]^{(1,2)}
&{\tableau{1&1&3\\2&2\\4}}
&{\tableau{1&1&2\\2&4\\3}}
\\
&{\tableau{1&1&2\\2&3\\4}}
}
}\]
		\caption{The modified crystal graph $\mathbb{B}(\lambda)^+$ in Example~{\rm \ref{exa3}}.}\label{fig3}
\end{figure}
\end{example}


\subsection{Types $B_n$, $C_{n}$, and $D_n$}

This section refers to the stable ranges in types $B_n$, $C_{n}$, and $D_n$, namely to a corresponding graph/poset $\mathbb{B}(\lambda)^+$. We refer freely to  Sections~\ref{domwc},~\ref{domwd}, and~\ref{domwb}, as well as to the results in Section~\ref{main-a}. 

\begin{lemma}\label{lemdnup} Lemmas {\rm \ref{lemdn}} and {\rm \ref{lemup}} hold in types $B_n$, $C_{n}$, and $D_n$, in the corresponding stable ranges.
\end{lemma}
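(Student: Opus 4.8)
The plan is to mirror the type $A$ argument from Lemmas~\ref{lemdn} and~\ref{lemup}, reducing everything to the verification that, for each of the new interval structures classified in types $B$, $C$, and $D$, the corresponding diagram can be lifted from the dominant weight poset to $\mathbb{B}(\lambda)^+$ and checked to commute. Recall that in the stable range the only cocovers beyond those of type $A$ are the ones labeled by a root of the form $\varepsilon_k+\varepsilon_{k+1}$ (type $C$), $\varepsilon_k$ (type $B$), or $\varepsilon_l+\varepsilon_n$/$\varepsilon_{n-1}+\varepsilon_n$ reduced to $\varepsilon_k+\varepsilon_{k+1}$ (type $D$). Since the short-root operators $\mathrm{f}_{\varepsilon_k}$ in type $B$ are treated separately (Theorem~\ref{Th_comm_op_B}), the bulk of the work is the analysis in Cases C1--C3 (equivalently B1--B3, with the type $D$ case subsumed under type $C$ by the final remark of Section~\ref{domwd}).

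First I would handle the \textbf{lower bound} statement (the analogue of Lemma~\ref{lemdn}). Given distinct edges $b\dashrightarrow b'$ and $b\dashrightarrow b''$ out of a vertex $b$ of weight $\mu$, Proposition~\ref{intcdn} (resp. Proposition~\ref{intb}) guarantees that the interval $[\nu\wedge\pi,\mu]$ realizes one of the diagrams C1--C3. Using Lemma~\ref{edgeexist}, I apply the $\mathrm{f}_\cdot$ operators indexed by the labels appearing in the relevant diagram, starting from $b$, and verify that the diagram commutes in $\mathbb{B}(\lambda)^+$. The commutativity of the triangles and diamonds follows from Theorem~\ref{Th_comm_op}: part~(1)(i) covers the pairs $(\varepsilon_i-\varepsilon_j,\varepsilon_j+\varepsilon_k)$ arising when a type $A$ cocover meets a $\varepsilon_\cdot+\varepsilon_\cdot$ cocover, part~(1)(ii) covers the configuration where the weight condition $\langle\mathrm{wt}(b)-\beta,\varepsilon_{j-1}-\varepsilon_j\rangle=0$ is needed, and part~(2) covers the fully non-interacting pairs. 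For each case C1--C3 I would check, by inspecting the displayed diagram, that the hypotheses on the inner products $\langle\mathrm{wt}(b),\alpha\rangle>0$ (and the auxiliary vanishing condition in~(1)(ii)) hold at each vertex where an operator is applied; these follow from dominance of the intermediate weights exactly as in the type $A$ pentagon computation shown in Lemma~\ref{lemdn}.

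For the \textbf{upper bound} statement (the analogue of Lemma~\ref{lemup}), I would use Proposition~\ref{intcup} (resp. Proposition~\ref{intb}) to reduce to Cases C$1'$--C$2'$ (B$1'$--B$2'$), and lift the corresponding diagrams using the $\mathrm{e}_\cdot$ operators via Theorem~\ref{Th_comm_ope}. As in type $A$, the subtlety is that the weight sitting at the relevant interior vertex of the modified diagram need not be dominant, so I would follow the same device: apply Theorem~\ref{Th_comm_ope}~(1)--(2) at the bottom diamond (its extra weight hypothesis being part of the case assumptions), then invoke Lemma~\ref{fenonzero}~(2) to reach the non-dominant vertex, and finally push $\mathrm{f}_\cdot$ operators along the remaining edges from the top, checking commutativity of the surviving triangles via Theorem~\ref{Th_comm_op}~(1). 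The type $B$ short-root interactions (Cases B$1'$--B$2'$ involving a label $\varepsilon_k$) would additionally draw on Theorem~\ref{Th_comm_op_B}.

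The main obstacle I anticipate is \textbf{Case C3} (and its type $B$ counterpart B3), the fully-overlapping configuration of two type~(**) cocovers interacting with a $\varepsilon_\cdot+\varepsilon_\cdot$ cocover: its diagram is the largest and most intertwined, and it requires applying Theorem~\ref{Th_comm_op}~(1)(ii) at a vertex where the vanishing condition $\langle\mathrm{wt}(b)-\beta,\varepsilon_{j-1}-\varepsilon_j\rangle=0$ must be confirmed from the explicit partition entries $\ldots 32^p1^2$ and their images. I would verify this condition carefully by reading off the multiplicities of consecutive parts at the relevant interior weights, since an error there would break the reduction. All the remaining diagrams decompose into triangles and diamonds whose commutativity is routine once the hypotheses are matched against Theorem~\ref{Th_comm_op}, so the essential content of the lemma is this case-by-case inspection, which parallels the type $A$ argument closely enough that no genuinely new technique is required.
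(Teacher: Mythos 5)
Your proposal follows essentially the same route as the paper's proof: reduce via Propositions~\ref{intcdn}, \ref{intcup}, and \ref{intb} to the Cases C1--C3/B1--B3 (for lower bounds) and C$1'$--C$2'$/B$1'$--B$2'$ (for upper bounds), lift the diagrams to $\mathbb{B}(\lambda)^+$ using Lemma~\ref{edgeexist}, and verify commutativity of the triangles and diamonds by Theorems~\ref{Th_comm_op}, \ref{Th_comm_ope}, and \ref{Th_comm_op_B}, including the device of inserting a non-dominant interior vertex in the modified upper-bound diagrams and reaching it via Lemma~\ref{fenonzero}~(2). The only detail worth adding is the paper's observation that the extra hypothesis of Theorem~\ref{Th_comm_ope}~(2) forces $n>4$ in type $D_n$, which is automatic in the stable range.
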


\begin{proof} We start with types $C_n$ and $D_n$. In addition to the cases considered in the proofs of Lemmas~{\rm \ref{lemdn}} and {\rm \ref{lemup}}, we need to consider the cases involving the new type of cover in the corresponding dominant weight poset, namely Cases~C$1-$C3 (for extending the first lemma, by Proposition~\ref{intcdn}) and Cases C$1'-$C$2'$ (for extending the second one, by Proposition~\ref{intcup}). The goal is the same: to lift the corresponding diagrams from the dominant weight poset to $\mathbb{B}(\lambda)^+$. We use the same reasoning as in the mentioned lemmas, and give more details below.

For instance, in order to prove Lemma~\ref{lemdn} in Case~C2, we let $\mathrm{f}_{i\overline{\jmath}}:=\mathrm{f}_{\alpha_{i\overline{\jmath}}}$, and calculate:
\[\mathrm{f}_{i+2,\overline{i+3}}\mathrm{f}_{i+1,i+3}(\mathrm{f}_{i,i+1}(b))=\mathrm{f}_{i+1,\overline{i+2}}\mathrm{f}_{i,i+1}(b)=\mathrm{f}_{i,i+1}\mathrm{f}_{i+1,\overline{i+2}}(b)\,;\]
indeed, each of the two equalities follows from Theorem~\ref{Th_comm_op}~(1) after we verify its hypothesis by inspecting the corresponding diagram.

We now turn to Lemma~\ref{lemup}. Like in the proof of its type $A$ version, we will perform the lift along the edges of slightly modified diagrams (by starting from the bottom) in the cases indicated below, while we use the same diagram as in Section~\ref{domwc} in Case C$1'$~(a). The reasoning is completely similar, based on repeatedly applying Theorems~\ref{Th_comm_ope}~(2) and \ref{Th_comm_op}~(1), after carefully verifying their hypotheses each time. Note that the special condition in Theorem~\ref{Th_comm_ope}~(2) requires $n>4$ in type $D_n$, but this is clearly true in the stable range. 

{\bf Case C1$\mathbf{'}$ (b).}
\[\scriptstyle
{\xymatrix{
&{\ldots 21^p}\ar[ddl]_{(i,j)}\ar[ddr]^{(j-2,\overline{j-1})}\ar[d]_{(j-2,j)}\\
&{\;\;\;\;\;\;\;\;\;\;\;\;\ldots 21^{p-2}01^2\;\;\;\;\;\;\;\;\;\;\;\;}\ar[dl]^{(i,j-2)}\ar[dr]_{(j-1,\overline{\jmath})}\\
{\;\;\;\;\;\;\;\;\;\;\;\;\ldots 1^{p+2}\;\;\;\;\;\;\;\;\;\;\;\;}\ar[dr]_{(j-1,\overline{\jmath})}\ar@<0.2mm>[dr]\ar@<0.4mm>[dr]&&{\;\;\;\;\;\;\;\;\;\;\;\;\ldots 21^{p-2}\;\;\;\;\;\;\;\;\;\;\;\;}\ar[dl]^{(i,j-2)}\ar@<0.2mm>[dl]\ar@<0.4mm>[dl]\\
&{\ldots 1^p}
} }
\]

{\bf Case C2$\mathbf{'}$.}
\[\scriptstyle
{\xymatrix{
&{\ldots a1^2}\ar[dl]_{(i,i+1)}\ar[d]_{(i+1,i+3)}\ar[ddr]^{(i+1,\overline{i+2})}\\
{\;\;\;\;\;\;\;\ldots(a-1)21\;\;\;\;\;\;\;}\ar[d]_{(i+1,i+3)}&{\ldots a01^2}\ar[dl]^{(i,i+1)}\ar[dr]_{(i+2,\overline{i+3})}\\
{\;\;\;\;\;\;\;\ldots(a-1)1^3\;\;\;\;\;\;\;}\ar[dr]_{(i+2,\overline{i+3})}\ar@<0.2mm>[dr]\ar@<0.4mm>[dr]&&{\;\;\;\;\;\;\;\ldots a\;\;\;\;\;\;\;}\ar[dl]^{(i,i+1)}\ar@<0.2mm>[dl]\ar@<0.4mm>[dl]\\
&{\ldots(a-1)1}} }
\]

As for type $B_n$, the reasoning is completely similar, based on Theorem~\ref{Th_comm_op_B}. To be more precise, for the $\mathrm{f}_\cdot$ operators, we use Cases B1$-$B3 in Section~\ref{domwb}; for the $\mathrm{e}_\cdot$ operators, the two diagrams above (for Cases  C1$\mathbf{'}$ (b) and C2$\mathbf{'}$)  are replaced with the following ones below (for Cases  B1$\mathbf{'}$ (b) and B2$\mathbf{'}$), respectively. 

{\bf Case B1$\mathbf{'}$ (b).}
\[\scriptstyle
{\xymatrix{
&{\ldots 21^p}\ar[ddl]_{(i,j)}\ar[ddr]^{(j-1)}\ar[d]_{(j-1,j)}\\
&{\;\;\;\;\;\;\;\;\;\;\;\;\ldots 21^{p-1}01\;\;\;\;\;\;\;\;\;\;\;\;}\ar[dl]^{(i,j-1)}\ar[dr]_{(j)}\\
{\;\;\;\;\;\;\;\;\;\;\;\;\ldots 1^{p+2}\;\;\;\;\;\;\;\;\;\;\;\;}\ar[dr]_{(j)}\ar@<0.2mm>[dr]\ar@<0.4mm>[dr]&&{\;\;\;\;\;\;\;\;\;\;\;\;\ldots 21^{p-1}\;\;\;\;\;\;\;\;\;\;\;\;}\ar[dl]^{(i,j-1)}\ar@<0.2mm>[dl]\ar@<0.4mm>[dl]\\
&{\ldots 1^{p+1}}
} }
\]

{\bf Case B2$\mathbf{'}$.}
\[\scriptstyle
{\xymatrix{
&{\ldots a1}\ar[dl]_{(i,i+1)}\ar[d]_{(i+1,i+2)}\ar[ddr]^{(i+1)}\\
{\;\;\;\;\;\;\;\ldots(a-1)2\;\;\;\;\;\;\;}\ar[d]_{(i+1,i+2)}&{\ldots a01}\ar[dl]^{(i,i+1)}\ar[dr]_{(i+2)}\\
{\;\;\;\;\;\;\;\ldots(a-1)1^2\;\;\;\;\;\;\;}\ar[dr]_{(i+2)}\ar@<0.2mm>[dr]\ar@<0.4mm>[dr]&&{\;\;\;\;\;\;\;\ldots a\;\;\;\;\;\;\;}\ar[dl]^{(i,i+1)}\ar@<0.2mm>[dl]\ar@<0.4mm>[dl]\\
&{\ldots(a-1)1}} }
\]
\end{proof}

\begin{theorem}\label{Th_Atomic-C}The components of $\mathbb{B}(\lambda)^+$ define an atomic decomposition. Moreover, these components are isomorphic to intervals of the form $[\widehat{0},\mu]$ in the dominant weight poset via the weight projection. 
\end{theorem}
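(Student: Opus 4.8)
The plan is to reduce Theorem~\ref{Th_Atomic-C} to the already-established type $A$ result, Theorem~\ref{Th_tAtomic}, by observing that its proof was structured so as to depend only on two inputs: the existence of the relevant edges (Lemma~\ref{edgeexist}), and the existence of lower and upper bounds for pairs of edges sharing a vertex (Lemmas~\ref{lemdn} and~\ref{lemup}). Lemma~\ref{edgeexist} is stated and proved in full generality (any classical type, any cocover), and Lemma~\ref{lemdnup} establishes Lemmas~\ref{lemdn} and~\ref{lemup} precisely in the stable ranges of types $B_n$, $C_n$, and $D_n$. Therefore the first step is to verify that \emph{every} argument in the proof of Theorem~\ref{Th_tAtomic} that produced the atomic decomposition is purely poset-theoretic and uses only these two ingredients, so that it transfers verbatim.

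Concretely, I would run through the three structural claims in order. First, the uniqueness of a minimal vertex $b_{\min}$ in each weakly connected component $C$: this argument (Figures~\ref{fig1} and~\ref{fig2}) uses only that all minimal vertices project to $\widehat{0}$, that directed paths descend to minimal vertices via Lemma~\ref{edgeexist}, and that any two edges out of a common vertex have a lower bound (Lemma~\ref{lemdn}, now available by Lemma~\ref{lemdnup}). The dual argument, using Lemma~\ref{lemup}, gives a unique maximum $b_{\max}$. Second, the fact that no two vertices in a component share a weight: this follows by choosing a saturated chain from $\mu$ down to $\widehat{0}$ in the dominant weight poset, applying the corresponding $\mathrm{f}_\cdot$ operators (via Lemma~\ref{edgeexist}) to route both hypothetical vertices to $b_{\min}$, and noting that the reverse sequence of $\mathrm{e}_\cdot$ operators from $b_{\min}$ can reach at most one vertex. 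Third, combining these, each component meets $P^+(\lambda)$ in exactly one vertex of each weight $\nu\le\mathrm{wt}(b_{\max})$, which is precisely the condition in Definition~\ref{crat}; the isomorphism with $[\widehat{0},\mu]$ via the weight projection then follows since the projection is a weight-preserving bijection of posets.

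The main obstacle, and the only genuinely type-specific work, is already discharged in Lemma~\ref{lemdnup}: one must check that the new interval shapes in the dominant weight poset (Cases~C1$-$C3, C$1'$$-$C$2'$, and their type $B$ analogues B1$-$B3, B$1'$$-$B$2'$) can be lifted to commuting diagrams in $\mathbb{B}(\lambda)^+$, using the commutation relations of Theorems~\ref{Th_comm_op}, \ref{Th_comm_ope}, and~\ref{Th_comm_op_B}, and that the hypotheses of those theorems (the weight-positivity and orthogonality conditions) hold along each diagram. The stable-range hypothesis (Propositions~\ref{stable-crit}, \ref{stable-crit-d}, \ref{stable-critb}) is essential here: it guarantees that only the first cocover in \eqref{cocoverc}/\eqref{cocoverd}/\eqref{cocoverb} appears alongside the type $A$ cocovers, so that the short/long root $\alpha_n$ never enters and all modified operators involved lie in the $W$-orbit of $\alpha_1$ (together with the $W$-orbit of $\alpha_n$ in type $B$), which is exactly the regime in which the commutation theorems apply. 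Thus, once Lemma~\ref{lemdnup} is in hand, the proof of Theorem~\ref{Th_Atomic-C} is a direct citation of the type $A$ argument, and I would write it simply as: \emph{the proof is identical to that of Theorem~\ref{Th_tAtomic}, now using Lemma~\ref{lemdnup} in place of Lemmas~\ref{lemdn} and~\ref{lemup}.}
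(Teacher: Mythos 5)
Your proposal matches the paper's proof exactly: the paper proves Theorem~\ref{Th_Atomic-C} by rerunning the first part of the proof of Theorem~\ref{Th_tAtomic} (the atomic, not $t$-atomic, part) with Lemma~\ref{lemdnup} supplying the lower- and upper-bound statements in place of Lemmas~\ref{lemdn} and~\ref{lemup}. Your more detailed verification that each step of the type $A$ argument is purely poset-theoretic and depends only on Lemma~\ref{edgeexist} and the bound lemmas is a correct elaboration of the same route.
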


\begin{proof} We use the same reasoning as in the first part of the proof of Theorem~\ref{Th_tAtomic} (the one referring to the atomic decomposition, as opposed to the $t$-atomic decomposition). The proof is based on Lemma~\ref{lemdnup} instead.
\end{proof}

\begin{example}\label{exc3} {\rm Consider $\lambda=(2,1,1)$ in type $C_3$. The modified crystal graph $\mathbb{B}(\lambda)^+$ is shown in Figure~\ref{fig4}. Its vertices are labeled by Kashiwara-Nakashima tableaux of dominant weight, and its edges are labeled as above. In particular, this graph gives the following atomic decomposition of the character:
\[\chi_\lambda=w_{(2,1,1)}+2w_{(1,1,0)}+w_{(0,0,0)}\,.\]}
\begin{figure}
 \[\scriptstyle
{\xymatrix{
{\tableau{1&1\\2\\{{3}}}}\ar[d]_{(2,\overline{3})}
&&{\tableau{1&3\\2\\{\overline{3}}}}\ar[d]_{(1,\overline{2})}
&&{\tableau{1&{\overline{3}}\\2\\{{3}}}}\ar[d]_{(1,\overline{2})}
&&{\tableau{2&{\overline{2}}\\3\\{\overline{3}}}}
\\
{\tableau{1&{{1}}\\3\\{\overline{3}}}}\ar[d]_{(1,2)}
&&{\tableau{2&{\overline{3}}\\3\\{\overline{2}}}}
&&{\tableau{1&{\overline{1}}\\3\\{\overline{3}}}}
\\
{\tableau{1&{{2}}\\3\\{\overline{3}}}}\ar[d]_{(1,\overline{2})}\\
{\tableau{2&3\\{\overline{3}}\\{\overline{2}}}}
}
}
\]
\caption{The modified crystal graph $\mathbb{B}(\lambda)^+$ in Example~{\rm \ref{exc3}}.}\label{fig4}
\end{figure}
\end{example}

\section{Additional facts and perspectives}\label{sec:conj}

\subsection{The $t$-atomic decomposition for the adjoint representation.}

Let $\tilde{\alpha}$ be the highest root of the Lie algebra $\mathfrak{g}%
$.\ When the root system of $\mathfrak{g}$ is simply laced, $\tilde{\alpha}$
is the unique positive root which is also a dominant weight. Otherwise, there
is another positive root $\hat{\alpha}$ which is dominant, and we have $0\leq\hat
{\alpha}\leq\tilde{\alpha}$.\ More precisely, both $\hat{\alpha}$ and
$\tilde{\alpha}-\hat{\alpha}$ are short roots of $R_{+}$. In the crystal
$B(\tilde{\alpha})$, there are vertices $b_{\alpha}$ of weight $\alpha$, one
for each root $\alpha$ of $\mathfrak{g}$ and $r$ vertices of weight $0$. 

\begin{lemma}
For any simple $\alpha_{i}$ and any index $j\in\{1,\ldots,r\}$, we have
$\tilde{f}_{j}(b_{\alpha_{i}})\neq0$ if and only if $i=j$.
\end{lemma}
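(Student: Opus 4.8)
The plan is to treat the two implications separately, using only the weight of $b_{\alpha_i}$ together with elementary crystal theory. First I would isolate the two facts that drive everything: (a) by the description preceding the lemma, the weights occurring in $B(\tilde{\alpha})$ are exactly the elements of $R\cup\{0\}$ (the roots, each with multiplicity one, and $0$ with multiplicity $r$); and (b) $\mathrm{wt}(\tilde{f}_j(b))=\mathrm{wt}(b)-\alpha_j$ whenever $\tilde{f}_j(b)\neq 0$. With these in hand, the whole statement reduces to deciding when $\alpha_i-\alpha_j$ can occur as a weight.

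For the implication $i=j\Rightarrow \tilde{f}_i(b_{\alpha_i})\neq 0$, I would argue exactly as in the proof of Lemma~\ref{fenonzero}. Since $\mathrm{wt}(b_{\alpha_i})=\alpha_i$ and $\langle\alpha_i,\alpha_i^\vee\rangle=2>0$, the standard string inequality $\phi_i(b_{\alpha_i})=\varepsilon_i(b_{\alpha_i})+\langle\mathrm{wt}(b_{\alpha_i}),\alpha_i^\vee\rangle\geq 2>0$ shows that $b_{\alpha_i}$ is not at the bottom of its $i$-string, hence $\tilde{f}_i(b_{\alpha_i})\neq 0$. (As a consistency check, $\tilde{f}_i(b_{\alpha_i})$ then has weight $0$, a weight which indeed occurs in $B(\tilde{\alpha})$.)

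For the converse I would show that $i\neq j$ forces $\tilde{f}_j(b_{\alpha_i})=0$ by a weight obstruction. If this were nonzero, it would be a vertex of $B(\tilde{\alpha})$ of weight $\alpha_i-\alpha_j$, so by (a) the vector $\alpha_i-\alpha_j$ would lie in $R\cup\{0\}$. It is not $0$ because $i\neq j$, and it is not a root because a difference of two distinct simple roots is never a root: any root expands over the simple roots with coefficients all of one sign, whereas $\alpha_i-\alpha_j$ has coefficient $+1$ on $\alpha_i$ and $-1$ on $\alpha_j$. This contradiction yields $\tilde{f}_j(b_{\alpha_i})=0$.

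The argument becomes essentially immediate once facts (a) and (b) are isolated, so I do not anticipate a genuine obstacle; the only points demanding a little care are the standard crystal-theoretic nonvanishing criterion used in the case $i=j$, and the elementary root-system fact that $\alpha_i-\alpha_j\notin R$ for $i\neq j$, which I would quote rather than reprove.
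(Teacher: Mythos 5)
Your proof is correct. For the direction $i\neq j\Rightarrow\tilde{f}_{j}(b_{\alpha_{i}})=0$ you use exactly the same weight obstruction as the paper: $\alpha_{i}-\alpha_{j}$ would have to be a weight of $B(\tilde{\alpha})$, hence a root or zero, and it is neither (the paper phrases the root-system fact as ``a simple root cannot be written as a sum of two positive roots,'' while you phrase it as ``a difference of two distinct simple roots has coefficients of mixed sign, hence is not a root''; these are the same elementary observation). The two arguments diverge only on the easy direction $i=j$: the paper obtains it indirectly, by noting that $b_{\alpha_{i}}$ is not the lowest-weight vertex of $B(\tilde{\alpha})$, so \emph{some} $\tilde{f}_{j}$ must act nontrivially, and then eliminating every $j\neq i$ by the weight obstruction; you instead argue directly from the string inequality $\phi_{i}(b_{\alpha_{i}})=\varepsilon_{i}(b_{\alpha_{i}})+\langle\alpha_{i},\alpha_{i}^{\vee}\rangle\geq 2>0$. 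Both are valid; your version is marginally more self-contained for that direction since it does not rely on the elimination step, whereas the paper packages both implications into a single survival argument.
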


\begin{proof}
Since $\alpha_{i}\neq-\tilde{\alpha}$ the lowest weight in $B(\tilde{\alpha}%
)$, there is at least an index $j$ such that $f_{j}(b_{\alpha_{i}})\neq0$. If
$j\neq i$, the vertex $f_{j}(b_{\alpha_{i}})$ has weight $\alpha_{i}%
-\alpha_{j}=\alpha\in R$. When $\alpha\in R_{+}$ (resp. when $-\alpha\in
R_{+}$), we get a contradiction because $\alpha_{i}=\alpha_{j}+\alpha$ is not
simple (resp. $\alpha_{j}=\alpha_{i}+(-\alpha)$ is not simple). 
\end{proof}

It follows from the lemma that the vertices $b_{i}=\tilde{f}_{\alpha_{i}%
}(b_{\alpha_{i}})$, $i=1,\ldots,r$, are the zero weight vertices in $B(\tilde
{\alpha})$. Recall also that
\[
K_{\tilde{\alpha},0}(t)=\sum_{i=1}^{r}t^{m_{i}}\,,
\]
where $m_{1},\ldots,m_{r}$ are the classical exponents of $\mathfrak{g}$.\ We
can choose $m_{1}=\left\vert \tilde{\alpha}\right\vert $ since $\left\vert
\tilde{\alpha}\right\vert $ is the greatest exponent. 

Assume first that $\alpha_{1}$ is short, that is, the root system is not of
type $B_{r}$ or $F_{4}$. In the simply laced case, the highest root
$\tilde{\alpha}$ is in the orbit of $\alpha_{1}$ and, since $\langle
\tilde{\alpha},\tilde{\alpha}\rangle>0$, we derive by
Lemma~\ref{Lemma_f_dominant} that $\mathrm{f}_{\tilde{\alpha}}(\tilde{\alpha
})\neq0$ is a vertex of zero weight in $B(\tilde{\alpha})$.\ In fact, the
previous lemma also implies that $\mathrm{f}_{\tilde{\alpha}}(\tilde{\alpha
})=b_{1}$, because $\mathrm{f}_{\alpha}(b_{i})=\mathrm{e}_{\alpha}(b_{i})=0$
for any $i\neq1$ and any positive root $\alpha$. Indeed, each vertex $b_{i}$
has zero weight and thus is fixed under the action of the Weyl group, while 
$\tilde{f}_{1}(b_{i})\neq0$ if and only if $i=1$. In the non-simply laced case
(that is, in types $C_{r}$ and $G_{2}$ under our assumption), we get similarly
$\mathrm{f}_{\tilde{\alpha}-\hat{\alpha}}(\tilde{\alpha})=b_{\hat{\alpha}}$
(because $\langle\tilde{\alpha},\tilde{\alpha}-\hat{\alpha}\rangle>0$) and
$\mathrm{f}_{\hat{\alpha}}(\hat{\alpha})=b_{1}$. In all cases, the previous
actions of the modified operators correspond to coverings in the dominant
weight poset. Therefore, with the notation of Section~\ref{sec:tatdec}, we have
$H(\tilde{\alpha})=\{b_{\tilde{\alpha}},b_{2},\ldots,b_{r}\}$, with
$\mathbb{B}(\tilde{\alpha},b_{i})=\{b_{i}\}$ for any $i=2,\ldots,r$, and
\[
\mathbb{B}(\tilde{\alpha},\tilde{\alpha}):\left\{
\begin{array}
[c]{c}%
b_{\tilde{\alpha}}\overset{\tilde{\alpha}}{\dashrightarrow}b_{1}\text{ in the
simply laced cases}\\[1.2mm]
b_{\tilde{\alpha}}\overset{\tilde{\alpha}-\hat{\alpha}}{\dashrightarrow
}b_{\tilde{\alpha}}\overset{\hat{\alpha}}{\dashrightarrow}b_{1}\text{ in types
}C_{r}\text{ and }G_{2}\,.%
\end{array}
\right.
\]
Now define a statistic $\mathrm{c}$ on $H(\tilde{\alpha})$ such that $\mathrm{c}(b_{\tilde
{\alpha}})=0$ and $\{\mathrm{c}(b_{2}),\ldots,\mathrm{c}(b_{r})\}=\{m_{2},\ldots,m_{r}\}$. We
can then extend it to $B(\tilde{\alpha})^{+}$ by setting 
\[\mathrm{c}(b_{1}%
)=\mathrm{c}(b_{\tilde{\alpha}})+\langle\tilde{\alpha},\rho^{\vee}\rangle=\left\vert
\tilde{\alpha}\right\vert\,,\;\;\;\;\;\mbox{and}\;\;\;\;\;\mathrm{c}(b_{\hat{\alpha}})=\mathrm{c}(b_{\tilde{\alpha}%
})+\langle\tilde{\alpha}-\hat{\alpha},\rho^{\vee}\rangle=\left\vert
\tilde{\alpha}\right\vert -\left\vert \hat{\alpha}\right\vert\,.\]
 We then get
\[
A_{\tilde{\alpha},\tilde{\alpha}}(t)=\sum_{\substack{h\in H(\tilde{\alpha}%
)\\ \mathrm{wt}(h)=\tilde{\alpha}}}t^{\mathrm{c}(h)}=1\;\;\;\text{ and }\;\;\;A_{\tilde{\alpha}%
,0}(t)=\sum_{\substack{h\in H(\tilde{\alpha})\\ \mathrm{wt}(h)=0}}t^{\mathrm{c}(h)}=t^{m_{2}}%
+\cdots+t^{m_{r}}\,.
\]
For types $C_{r}$ and $G_{2}$, we also have
\[
A_{\tilde{\alpha},\hat{\alpha}}(t)=\sum_{\substack{h\in H(\tilde{\alpha})\\ \mathrm{wt}%
(h)=\hat{\alpha}}}t^{\mathrm{c}(h)}=0\,.
\]
Finally, by Proposition \ref{defateq},  we get the desired $t$-atomic decomposition
\[
K_{\tilde{\alpha},0}(t)=\left\{
\begin{array}
[c]{l}%
t^{0}A_{\tilde{\alpha},0}(t)+t^{\left\vert \tilde{\alpha}\right\vert
}A_{\tilde{\alpha},\tilde{\alpha}}(t)\text{ in the simply laced case}\\[2mm]
t^{0}A_{\tilde{\alpha},0}(t)+t^{\left\vert \tilde{\alpha}\right\vert
-\left\vert \hat{\alpha}\right\vert }A_{\tilde{\alpha},\hat{\alpha}%
}(t)+t^{\left\vert \tilde{\alpha}\right\vert }A_{\tilde{\alpha},\tilde{\alpha
}}(t)\text{ for types }C_{r}\text{ and }G_{2}\,.%
\end{array}
\right.
\]

Now it remains to consider types $B_{r}$ and $F_{4}$ where $\alpha_{1}$ is a
long root. Recall Stembridge's result \cite{stepod} stating that any simple
root gives a cover in dominant weight poset. As we have already seen for type $B_{r}$, we cannot define only the modified crystal operators based on $\tilde{f}_{1}$, because this would not
permit to get the covering relations corresponding to short roots.\ We shall also need the modified crystal operators obtained by Weyl group conjugation of the ordinary
crystal operator $\tilde{f}_{r}$ with $\alpha_{r}$ the short simple root. In fact, for the adjoint representation in type $B_{r}$ and $F_{4}$, the coverings we need only make
the short roots appear. So we only need, for any
short root $\alpha\in R_{+}$, the modified operators $\mathrm{f}_{\alpha
}=u_{\alpha}\tilde{f}_{r}u_{\alpha}^{-1}$, where $u_{\alpha}\in W$ is of minimal length such
that $u(\alpha_{r})=\alpha$. By using the same arguments as above, we then
also get a $t$-atomic decomposition for $K_{\tilde{\alpha},0}(t)$, this
time with $\mathbb{B}(\tilde{\alpha},b_{i})=\{b_{i}\}$ for any $i\neq n$
and
\[
\mathbb{B}(\tilde{\alpha},\tilde{\alpha}):b_{\tilde{\alpha}}\overset
{\tilde{\alpha}-\hat{\alpha}}{\dashrightarrow}b_{\hat{\alpha}}\overset
{\hat{\alpha}}{\dashrightarrow}b_{n}\,\text{.}%
\]

\begin{remark}{\rm 
In simply laced cases, we expect that the modified crystal operators defined from $\tilde{f}_{1}$ suffice to derive an atomic decomposition of crystals with mild assumptions on the highest weight considered (as in
Example \ref{counterex}).\ With similar restrictions and by considering also the modified crystal operators defined from $\tilde{f}_{n}$, this should also hold in the non simply laced cases even if new commutation relations will be probably needed in types $F_{4}$ and $G_{2}$.
}\end{remark}

\subsection{More about the charge}

In Section \ref{sec:tatdec}, we defined the $t$-atomic decomposition of $%
B(\lambda)$ from a charge statistic on $H(\lambda)$ which propagates on the
vertices $b$ of each component $\mathbb{B}(\lambda,h)$, $h\in H(\lambda)$,
by the formula~\eqref{propac}. %
When such an atomic decomposition exists, it yields a combinatorial
description of $K_{\lambda,\mu}(t)$ by (\ref{combkf}). In type $A$, Theorem~%
\ref{Th_tAtomic} gives a $t$-atomic decomposition of crystals, where $%
\mathrm{c}$ is the Lascoux-Sch\"{u}tzenberger charge statistic.

Conversely, if we fix $\nu\in P_{+}$ and assume that we have both a
combinatorial description of the Kostka-Foulkes polynomials $K_{\lambda,\mu}(t)$
with $\lambda \leq\nu$ due to a statistic $\mathrm{c}$ defined on the
crystals $B(\lambda )$, $\lambda\leq\nu$, and an atomic decomposition of
each $B(\lambda)$ in which (\ref{propac}) holds, we will obtain a $t$-atomic
decomposition of $B(\lambda)$ based on $\mathrm{c}$, exactly as in the proof
of Theorem~\ref{Th_tAtomic}.

Now assume that $\nu $ is a partition and $\mathfrak{g}$ is of type $C_{r}$,
with $r$ sufficiently large. In \cite{lec}, a statistic was defined on
Kashiwara-Nakashima tableaux which, conjecturally, gives a combinatorial
description of the Kostka-Foulkes polynomials.\ The definition of this
statistic is involved since it is based on the cyclage operation in
the symplectic plactic monoid, which is much more complex than in type $A$.\
It is even challenging to decide if it (or one of its versions) could satisfy
(\ref{propac}) when the atomic decomposition of $B(\lambda)$ based on the
modified crystal operators is used. In \cite{LL}, we also defined a
statistic on King tableaux of zero weight, and proved that it yields a
combinatorial description of the generalized exponents $K_{\lambda ,0}(t)$.
It is then tempting to try to connect this description with the charge
defined in \cite{lec} by using the {\em Sheats bijection}~\cite{She} between King and
Kashiwara-Nakashima tableaux, as the crystal structure is only known on the latter (an implementation of this bijection is available, see \cite{choe}). Unfortunately, it was established in \cite{GT} that this bijection does not intertwine the two statistics (a
counterexample was found in type $C_{3}$ for $\lambda =(2,1,1)$). One can also imagine
combining the Sheats bijection, the atomic decomposition of the crystal $B(\lambda)$ for $\lambda $ a partition of even size, and (\ref{propac}), in
order to define another statistic on the whole $B(\lambda)^{+}$. An
interesting question would then arise: does this new statistic give a
combinatorial description of the Lusztig $t$-analogue? If the answer is
affirmative, we would get, in type $C$: (1) a generalization of the
Lascoux-Sch\"{u}tzenberger charge; (2) an efficient algorithm, based on
crystal combinatorics, for calculating $K_{\lambda ,\mu }(t)$ starting from $%
K_{\lambda ,0}(t)$.

\subsection{Atomic decompositions for stable one-dimensional sums}\label{1ds}

One-dimensional sums are $q$-analogues of tensor product multiplicities
defined using the energy function on tensor products of {\em Kirillov-Reshetikhin (KR) 
crystals}.\ In affine type $A$, they are known to coincide with the (finite type $A$) Kostka
polynomials when the affine KR crystals considered have row or column shapes.
For the other classical affine types, one-dimensional sums corresponding to tensor
products of row and column KR crystals admit stable versions which are
special Kostka polynomials. Nevertheless, the two families do not coincide
in general. We refer the interested reader to \cite{LL2}, where we establish, in any
classical affine type, the atomic decomposition for the stable one-dimensional
sums associated to tensor products of row and column KR crystals. As
mentioned previously, we do expect that the atomic decomposition holds in
full generality, for all the Kostka-Foulkes polynomials (up to mild assumptions on the
rank of the root system considered), and not only when these polynomials coincide with
one-dimensional sums.

\section{Geometric interpretation}\label{sec:geom} 

We give an interpretation of the combinatorial atomic decomposition in terms of the {\em geometric Satake correspondence}.
For a reductive group $G$, this important theory exhibits a geometric
realization of the irreducible representation $V(\lambda)$ of highest weight
$\lambda$ of the Langlands dual group, as the {\em intersection cohomology}
$IH^{*}(\overline{{Gr}_{\lambda}})$ of the {\em Schubert variety} denoted
$\overline{{Gr}_{\lambda}}$ in the {\em affine Grassmannian} $Gr_{G}$ for $G$; there
is also a geometric basis of \emph{MV cycles} \cite{mavgld}. However, it is
hard to give concrete formulas for the MV cycles and the action. We will show
how one can understand the combinatorics of the geometric Satake correspondence via our combinatorial atomic decomposition.

The Schubert variety $\overline{Gr^\lambda}$ in $Gr_G$ has a {\em Bott-Samelson desingularization} $\widehat{\Sigma}\rightarrow \overline{Gr^\lambda}\hookrightarrow Gr_G$. Thus, we have cohomology maps
\begin{equation}\label{cohmaps}
H^*(Gr_G)\rightarrow H^*(\overline{Gr^\lambda})\hookrightarrow H^*(\widehat{\Sigma})\simeq IH^*(\overline{Gr^\lambda})\oplus \mbox{other summands}\,.
\end{equation}
The direct sum decomposition, as $H^*(Gr_G)$-modules, is given by the {\em Decomposition Theorem}, see e.g. \cite{camdtp}.

$IH^*(\overline{Gr^\lambda})$ has the {\em truncation filtration} (or {\em standard Grothendieck filtration}), which gives the Kostka-Foulkes polynomials when restricted to the weight spaces~\cite{ginpsl}. The degree $0$ piece in this filtration is the cohomology of the constant sheaf, i.e., $H^*(\overline{Gr^\lambda})$. This has the basis of classes of Schubert varieties inside $\overline{Gr^\lambda}$, which are indexed by the weights of $V(\lambda)$ considered without multiplicity, as recorded by the layer sum polynomials. In this language, the atomic decomposition decomposition in Definition~\ref{defat} is expressing the fact that there is a refinement of the truncation filtration (with the $H^*(Gr_G)$-action), whose successive quotients are isomorphic to $H^*(\overline{Gr^\mu})$ for $\mu\in P^+(\lambda)$. These quotients correspond precisely to the blocks of the partition in the combinatorial atomic decomposition, see Definition~\ref{crat}.


\begin{thebibliography}{99}                                                                                               

\bibitem{BrM} T. Braden and R. MacPherson. From moment graphs to intersection cohomology. {\em Math. Ann.} 321:533--551, 2001.

\bibitem {Bri}M. Brion, \newblock Points entiers dans les poly\`{e}dres
convexes. \newblock {\em Ann. Sci. \'{E}cole Norm. Sup. (4)} 21:653--663, 1988.

\bibitem{bahnov} W. Brockman and M. Haiman. Nilpotent orbit varieties and the atomic decomposition of the $q$-Kostka polynomials. {\em Canad. J. Math.} 50:525--537, 1998. 

\bibitem {brylip}T.~Brylawski. \newblock The lattice of integer partitions.
\newblock {\em Discrete Math.} 6:201--219, 1973.

\bibitem {bry}{R.-K. Brylinski.} {Limits of weight spaces, Lusztig's }%
$q${-analogs and fiberings of adjoint orbits}. \emph{J.\ Amer.\ Math.\ Soc.}
2:517--533, 1989.

\bibitem {camdtp} M.~de~Cataldo and L.~Migliorini. \newblock The decomposition
theorem, perverse sheaves and the topology of algebraic maps.
\newblock {\em Bull. Amer. Math. Soc. (N.S.)} 46:535--633, 2009.

\bibitem{choe} H. Choe, An implementation of the Sheats bijection in {\tt R}, {\tt https://github.com/hanjoonchoe/Combinatorics\_Project/}.

\bibitem{khare} G. Dhillon and A. Khare. The Weyl-Kac weight formula. {\tt arXiv:1802.06974}, 2018.


\bibitem {ginpsl}V.~Ginzburg. \newblock Perverse sheaves on a {L}oop group and
{L}anglands' duality. \newblock {\tt arXiv:math.alg-geom/9511007}, 1995.

\bibitem{GT} T. Gerber and J. Torres, personal communication, 2018.

\bibitem {HK}J. Hong and S.-J. Kang. \emph{Introduction to quantum groups and
crystal bases}. Graduate Studies in Mathematics, {42}, Amer. Math. Soc., 2000.

\bibitem{HL1} J. Hong and H. Lee. Young tableaux and crystal $B(\infty)$ for finite simple Lie algebras. {\em J. Algebra} 320:3680--3693, 2008. 




\bibitem {kascbm}M.~Kashiwara. \newblock Crystal bases of modified quantized
enveloping algebra. \newblock {\em Duke Math. J.} 73:383--413, 1994.

\bibitem {kash}M. Kashiwara. \newblock On crystal bases.
\newblock {\em CMS Conf. Proc.} 16:155--197, Amer. Math. Soc., Providence, RI, 1995.

\bibitem{Kat} S. Kato. \newblock Spherical functions and a $q$-analogue of Kostant's weight multiplicity formula. \newblock {\em Invent. Math.} 66:461--468, 1982.

\bibitem{LSc1} {A. Lascoux and M.-P. Sch\"{u}tzenberger. }{Sur une conjecture
de H. O. Foulkes}. \emph{C.R. Acad. Sci. Paris} {288}:95--98, 1979.

\bibitem {lascpw}A.~Lascoux. \newblock Cyclic permutations on words, tableaux
and harmonic polynomials. \newblock In \emph{Proceedings of the {H}yderabad
{C}onference on {A}lgebraic {G}roups ({H}yderabad, 1989)}, pages 323--347.
Manoj Prakashan, Madras, 1991.

\bibitem {LLT}{A. Lascoux, B. Leclerc, and J.-Y.\ Thibon. }{Crystal graphs and
}$q${-analogue of weight multiplicities for the root system }$A_{n}.$
\emph{Lett. Math. Phys.} {35}:359--374, 1995.

\bibitem {lec}{C.\ Lecouvey}. {Kostka-Foulkes polynomials, cyclage graphs and
charge statistic for the root system }$C_{n}.$ \emph{J. Algebraic Combin.}
{21}:203--240, 2005.

\bibitem {Lec2}{C.\ Lecouvey}. {Combinatorics of crystal graphs and
Kostka-Foulkes polynomials for the root systems $B_{n}$, $C_{n}$, and $D_{n}$%
}. \emph{European J. Combin.} {27}:526--557, 2006.

\bibitem {Lec4}{C.\ Lecouvey}.{\ Stabilization of the Brylinsky filtration and limit of Lusztig }$q$-analogues. \emph{J. Algebraic Combin.} {27}:451--477, 2008.

\bibitem{LL} C. Lecouvey and C. Lenart. Combinatorics of generalized exponents. {\em Int. Math. Res. Not.}, 2018, {\tt DOI 10.1093/imrn/rny157}.

\bibitem{LL2} C. Lecouvey and C. Lenart. Lusztig's $t$-analogues via crystals, submitted to the Proceedings of conference {\em Interactions of quantum affine algebras with current algebras, cluster algebras and categorification}  (anniversary conference for Vyjayanthi Chari, Washington, D.C., 2018), J. Greenstein, D. Hernandez, K. Misra, and P. Senesi, editors.

\bibitem {LecShi} {C. Lecouvey and M. Shimozono. }{Lusztig's $q$-analogue of
weight multiplicity and one-dimensional sums for affine root systems.
}\emph{Adv. Math.} {208}:438--466, 2007.

\bibitem {LeSa}K.-H. Lee and B. Salisbury. Tableaux, crystal bases and the
Gindikin-Karpelevitch formula. {\em J. Korean Math. Soc.} 51:289--309, 2014.

\bibitem {Lut}{G. Lusztig}. {Singularities, character formulas, and a }%
$q${-analog of weight multiplicities}. Analyse et topologie sur les espaces
singuliers (II-III), \emph{Ast\'{e}risque} {101-102}:208--227, 1983.


\bibitem{macsfg}
I.~Macdonald.
\newblock {\em Spherical functions on a group of {$p$}-adic type}.
\newblock Ramanujan Institute, Centre for Advanced Study in
  Mathematics,University of Madras, Madras, 1971.
\newblock Publications of the Ramanujan Institute, No. 2.

\bibitem {mavgld}I.~Mirkovi{\'{c}} and K.~Vilonen. \newblock Geometric
{L}anglands duality and representations of algebraic groups over commutative
rings. \newblock {\em Ann. of Math. (2)} 166:95--143, 2007.

\bibitem {NY}{A. Nakayashiki and Y. Yamada}. {Kostka-Foulkes polynomials and
energy function in sovable lattice models}. \emph{Selecta Math. (N.S.)}
{3}:547--599, 1997.

\bibitem {NR}{K. Nelsen and A. Ram}. {Kostka-Foulkes polynomials and Macdonald
spherical functions}. In \emph{Surveys in Combinatorics}, C. Wensley
ed.,\ {London Math.\ Soc.\ Lect.\ Notes}, {307}, Cambridge University Press,
325--370, 2003.

\bibitem{Pos} A. Postnikov. {Permutohedra, associahedra, and beyond}. {\em Int. Math. Res. Not.}, no. 6, 1026--1106, 2009.

\bibitem{ras} J. Rasmussen. Layer structure of irreducible Lie algebra modules. {\tt arXiv:1803.06592}, 2018.

\bibitem{combinat}
The {S}age-{C}ombinat community.
\newblock {S}age-{C}ombinat: enhancing {S}age as a toolbox for computer
  exploration in algebraic combinatorics, 2008.
\newblock {\tt http://combinat.sagemath.org}.

\bibitem {Schu}W. Sch\"{u}tzer. A new character formula for Lie algebras and
Lie groups. {\em J. Lie Theory} 22:817--838, 2012.\ 

\bibitem {She}J. Sheats. A symplectic jeu de taquin bijection between the tableaux of King and De Concini, {\em Trans. Amer. Math.}, no. 351, 3569--3607, 1999.  

\bibitem {shimam}M.~Shimozono. \newblock Multi-atoms and monotonicity of
generalized {K}ostka polynomials. \newblock {\em European J. Combin.} 
22:395--414, 2001.

\bibitem {stepod}J.~Stembridge. \newblock The partial order of dominant
weights. \newblock {\em Adv. Math.} 136:340--364, 1998.

\bibitem {Ste}{J. Stembridge}. {Kostka-Foulkes polynomials of general type}. Lecture notes for the {\em Generalized Kostka Polynomials Workshop}, American
Institute of Mathematics, July 2005, {\tt https://www.aimath.org/WWN/kostka}.

\end{thebibliography}
\end{document}